\definecolor{Blue}{rgb}{0.3,0.3,0.9}
\newtheorem{thm}{Theorem}[section]
\newtheorem{prop}[thm]{Proposition}
\newtheorem{rem}[thm]{Remark}
\newcommand{\commentout}[1]{}
\newcommand{\nwc}{\newcommand}
\nwc{\bR}{\mb R}
\nwc{\bH}{{\mb H}}
\nwc{\bxp}{{{\mathbf x}}}
\nwc{\bap}{{{\mathbf y}}}
\nwc{\bPhi}{\mathbf{\Phi}}
\nwc{\bPsi}{\mathbf{\Psi}}
\nwc{\bh}{\mathbf h}
\nwc{\bI}{\mathbf I}
\nwc{\bP}{\mathbf P}
\nwc{\bs}{\mathbf s}
\nwc{\bd}{\mathbf{d}}
\nwc{\bX}{\mathbf X}
\nwc{\om}{\omega}
\nwc{\nwt}{\newtheorem}
\nwc{\xp}{{x^{\perp}}}
\nwc{\yp}{{y^{\perp}}}
\nwc{\ba}{{\mb a}}
\nwc{\bal}{\begin{align}}
\nwc{\ben}{\begin{equation*}}
\nwc{\beqq}{\begin{equation}}
\nwc{\bea}{\begin{eqnarray}}
\nwc{\beq}{\begin{eqnarray}}
\nwc{\bean}{\begin{eqnarray*}}
\nwc{\beqn}{\begin{eqnarray*}}
\nwc{\beqast}{\begin{eqnarray*}}
\nwc{\eal}{\end{align}}
\nwc{\een}{\end{equation*}}
\nwc{\eeqq}{\end{equation}}
\nwc{\eea}{\end{eqnarray}}
\nwc{\eeq}{\end{eqnarray}}
\nwc{\eean}{\end{eqnarray*}}
\nwc{\eeqn}{\end{eqnarray*}}
\nwc{\eeqast}{\end{eqnarray*}}
\nwc{\vep}{\varepsilon}
\nwc{\ep}{\epsilon}
\nwc{\ept}{\epsilon}
\nwc{\vrho}{\varrho}
\nwc{\orho}{\bar\varrho}
\nwc{\ou}{\bar u}
\nwc{\vpsi}{\varpsi}
\nwc{\lamb}{\lambda}
\nwc{\Var}{{\rm Var}}
\nwc{\nn}{\nonumber}
\nwc{\mf}{\mathbf}
\nwc{\mb}{\mathbf}
\nwc{\ml}{\mathcal}
\nwc{\IA}{\mathbb{A}} 
\nwc{\bi}{\mathbf i}
\nwc{\bo}{\mathbf o}
\nwc{\IB}{\mathbb{B}}
\nwc{\IC}{\mathbb{C}} 
\nwc{\ID}{\mathbb{D}} 
\nwc{\IM}{\mathbb{M}} 
\nwc{\IP}{\mathbb{P}} 
\nwc{\II}{\mathbb{I}} 
\nwc{\IE}{\mathbb{E}} 
\nwc{\IF}{\mathbb{F}} 
\nwc{\IG}{\mathbb{G}} 
\nwc{\IN}{\mathbb{N}} 
\nwc{\IQ}{\mathbb{Q}} 
\nwc{\IR}{\mathbb{R}} 
\nwc{\IT}{\mathbb{T}} 
\nwc{\IZ}{\mathbb{Z}} 
\nwc{\cE}{{\ml E}}
\nwc{\cP}{{\ml P}}
\nwc{\cQ}{{\ml Q}}
\nwc{\cL}{{\ml L}}
\nwc{\cX}{{\ml X}}
\nwc{\cW}{{\ml W}}
\nwc{\cZ}{{\ml Z}}
\nwc{\cR}{{\ml R}}
\nwc{\cV}{{\ml V}}
\nwc{\cT}{{\ml T}}
\nwc{\crV}{{\ml L}_{(\delta,\rho)}}
\nwc{\cC}{{\ml C}}
\nwc{\cO}{{\ml O}}
\nwc{\cA}{{\ml A}}
\nwc{\cK}{{\ml K}}
\nwc{\cB}{{\ml B}}
\nwc{\cD}{{\ml D}}
\nwc{\cF}{{\ml F}}
\nwc{\cS}{{\ml S}}
\nwc{\cM}{{\ml M}}
\nwc{\cG}{{\ml G}}
\nwc{\cH}{{\ml H}}
\nwc{\bk}{{\mb k}}
\nwc{\bn}{{\mb n}}
\nwc{\cbz}{\overline{\cB}_z}
\nwc{\supp}{{\hbox{supp}}}
\nwc{\fR}{\Re}
\nwc{\bY}{\mathbf Y}
\nwc{\pft}{\cF^{-1}_2}
\nwc{\bU}{{\mb U}}
\nwc{\bG}{{\mb G}}
\nwc{\bg}{\mathbf{g}}
\nwc{\mbf}{\mathbf{f}}
\nwc{\mbe}{\mathbf{e}}
\nwc{\be}{\mathbf{e}}
\nwc{\Om}{\Omega}
\nwc{\ind}{\operatorname{I}}
\nwc{\mbx}{\mathbf{f}}
\nwc{\bb}{\mathbf{g}}
\nwc{\xmax}{f_{\rm max}}
\nwc{\xmin}{f_{\rm min}}
\nwc{\suppx}{\hbox{\rm supp} (\mbf)}
\nwc{\by}{\mathbf{h}}
\nwc{\bZ}{\mathbf{Z}}
\nwc{\bF}{\mathbf{F}}
\nwc{\bE}{\mathbf{E}}
\nwc{\bV}{\mathbf{V}}
\nwc{\cI}{\IZ^2_N}
\nwc{\chis}{{\chi^{\rm s}}}
\nwc{\chii}{{\chi^{\rm i}}}
\nwc{\pdfi}{{f^{\rm i}}}
\nwc{\pdfs}{{f^{\rm s}}}
\nwc{\pdfii}{{f_1^{\rm i}}}
\nwc{\pdfsi}{{f_1^{\rm s}}}
\nwc{\thetatil}{{\tilde\theta}}
\nwc{\red}{\color{red}}
\nwc{\prox}{\hbox{prox}}
\nwc{\diag}{\hbox{\rm diag}}
\nwc{\sloc}{J_{\rm f}}
\nwc{\bu}{\xi}
\nwc{\bv}{\eta}
\nwc{\cU}{\mathcal{U}}
\nwc{\cN}{\mathcal{N}}
\nwc{\bN}{\mathbf{N}}
\nwc{\mbm}{\mathbf{m}}
\nwc{\bw}{\mathbf{w}}
\nwc{\im}{i}
\nwc{\bom}{\mathbf{w}}
\nwc{\bt}{\mathbf{t}}
\nwc{\z}{y}
\nwc{\cY}{\mathcal{Y}}
\title{
 Arnoldi algorithms  with  structured orthogonalization }
\author{The Author}
\date{}                                           
\author{Pengwen Chen$^*$, Chung-Kuan~Cheng$^{\dag\ddag}$,
Xinyuan Wang$^{\ddag}$\\
\normalsize $^*$ Applied mathematics, National Chung Hsing University, Taiwan\\
\normalsize $^\dag$CSE and $^\ddag$ECE Departments, UC San Diego, La Jolla,CA, USA\\
\normalsize email: pengwen@nchu.edu.tw, ckcheng@ucsd.edu, xiw193@eng.ucsd.edu
}
\begin{document}
\maketitle

\begin{abstract}
We study  a stability preserved Arnoldi algorithm for matrix exponential in  the time domain simulation of large-scale power delivery networks (PDN), which are formulated as semi-explicit differential algebraic equations (DAEs). 
The solution can be decomposed to a sum of two projections, one in the range of the system operator
 and the other in its null space.  
The range projection can be  computed with
one shift-and -invert Krylov subspace method. The other projection can be computed with the algebraic equations. Differing from the ordinary Arnoldi method, the orthogonality in the Krylov subspace is replaced with the semi-inner product induced by the positive semi-definite system operator. With proper adjustment,   numerical ranges of the Krylov operator lie in the right half plane, and  we obtain  theoretical convergence analysis  for the modified  Arnoldi algorithm in computing phi-functions. 
Lastly,   simulations on RLC networks are demonstrated to validate the effectiveness of the  Arnoldi algorithm  with structured-orthogonalization.
\end{abstract}

\section{Introduction}
Evaluating the  performance of a power deliver network (PDN) has become a critical issue in   very large-scale integration (VLSI) designs.
  The power supply from the package down to on-chip integrated circuits is distributed through metal layers and vias, which could be modeled as a linear network consisting of resistors, capacitors and inductors \cite{nassif2008power}. The on-chip circuit modules are simplified as time-varying current sources in PDN analysis. Due to the shrinking feature size and increasing design complexity, the network could easily consist of millions to billions of elements which result in an extremely huge system. Moreover, the values of elements in a system level PDN may vary greatly and the transient responses include many different scaled time constants, which makes the whole differential system very stiff.  In order to characterize the long term dynamic behavior, an extended time span at small-scaled time steps is necessary and extra computation efforts are required. At the same time, the stiffness of the system is increased which degrades the performance of traditional simulation methods. All the challenges make a fast and accurate simulator in high demand.

Let $x(t)\in \IR^N$ be  the solution to a   system of stiff differential equations,\cite{chen2018transient, AWESOME} 
  \beqq\label{sys0}
  \frac{dq(t)}{dt}+f(x(t))=u(t), \; x(0)=x_0,
  \eeqq
   where  $u(t)$ is the input signal to the circuit system, 
$x(t)\in \IR^N$ of large dimension $N$ denotes nodal voltages and branch currents at time $t$ and $q,f\in \IR^N$ are the charge(or flux) and current (or voltage) terms, respectively. 
  The system is governed by Kirchhoff's current law and voltage law. 
  With   linearization, 
  we have
\beqq\label{sys}
C \frac{dx}{dt}+G x=u(t), x(0)=x_0,\eeqq
where  $C$ and $G$ both are $N\times N$ matrices, which are the Jacobian matrices of $q$ and $f$ with respect to $x$, respectively. 
In the study, we  assume that 
 $C,G$ are  constant  matrices  and
\begin{equation}\label{assGC}
    \begin{cases}
    &\textrm{ $G$ is positive definite, but not necessarily symmetric}; \\
& \textrm{ $C$ is positive semi-definite  and symmetric. }
    \end{cases} 
 \end{equation}
 Every node is supposed to connect to power or ground
via a path of resistors, which makes $G$ nonsingular.
For a stiff  system,  the solution can  be  of multiple timescales, i.e., the attractive solution is surrounded with fast-changing nearby solutions.

%
%
When $C$ is nonsingular, the solution can be formulated as  exponentials of the matrix $A:=C^{-1} G$.  There are various ways to implement the computation\cite{Moler78},\cite{Moler03}  depending on the state companion matrix $A$. 
When $A$ is a matrix in small size, 
the most effective algorithm is a scaling-and-squaring method based on Pad{\'e} approximation\cite{Tref12}.
When 
$A$ is sparse and large,
 one general and well-established technique is 
approximating the action of the matrix exponentials  
 in the
class of Krylov subspaces. 
 One essential ingredient  is  the
evaluation or approximation of the product of the exponential of the Jacobian  $A$ with a
vector $v$.
The application  of Krylov subspace techniques  has been actively
investigated in the literatures\cite{Fri_1989, Saad92,Moler03,Hoch09,Wright_phi_2012,JIMENEZ2020112758}.
In general, the  nonlinear form in (\ref{sys0}) can be numerically handled by various exponential Runge-Kutta schemes with the aid of exponential integrators\cite{Hoch09,hochbruck_ostermann_2010} and 
references therein.

It is well-known that  Krylov
subspace methods for matrix functions exhibits super-linear
convergence behavior under  sufficient large Krylov dimension (larger than the norm of the operator)\cite{Saad92}\cite{Hoch}. Recently,  researchers  observe  the
superiority  of rational Krylov subspace methods over standard Krylov subspace methods, in particular, the spectrum of the operator lies in the half-plane, e.g., the Laplacian operators in PDEs\cite{Drus98},\cite{Grim08}. 
The convergence of
computing exponential integrators of  evolution equations in the resolvent Krylov subspace 
 is independent of the operator norm of $A$ from one numerical discretization, when $A$ in $\exp(-A)$ has numerical range(or called field of values) in the right half plane~\cite{Grim12}\cite{Tanj17}.

Exponential  integrator based methods have been introduced 
 for PDN transient simulations~\cite{zhuang2016simulation,chen2018transient}. Compared to the traditional  linear multi-step methods, the matrix exponential based method is not bounded by the Dahlquist stability barrier thus larger step size can be employed~\cite{wanner2006dahlquist, zhuang2016simulation}. 
The stability of matrix exponential based method when applied to ODEs has been well established in previous work \cite{Weng12_TCAD, zhuang2016simulation}. For general circuit simulation with DAEs, the stability remains an interesting topic \cite{freund2000krylov, ilchmann2014surveys, winkler2003stochastic, takamatsu2010index}. Numerical stability issues are reported in \cite{chen2018transient, AWESOME} and reveal the limitation of matrix exponential computations with Krylov subspace. Similar problems occur in the eigenvalue computation \cite{IRA, nour1987implement} and model order reduction for interconnect simulation \cite{rommes2009exploiting, IRA2}, where Krylov subspace methods are widely used. 
As one shift-and-invert method,  one modified Arnoldi algorithm 
for matrix exponential are proposed 
to provide  stable computations of matrix exponentials, where
 Arnoldi vectors are 
orthogonal 
with respect to the system operator $C$\cite{chen2018transient, AWESOME}.

In this paper, we shall examine
 the modified shift-and-invert Arnoldi algorithm from the perspective of numerical ranges, which provides one theoretical foundation for the Arnoldi algorithm described in~\cite{2018transient, AWESOME}.    
 Since the matrix $C$ could be singular in PDN transient simulation, we introduce $C$ semi-inner product as well as its induced norm,   \[ \langle x, y\rangle_C:=\Re(x^* C y), \; \|x\|_C:=\Re(x^* C x) \] to derive the error analysis, instead  of the ordinary  inner product $\langle  x,y\rangle:=\Re(x^* y)$.  Likewise, the $C$-norm $\|x\|_C:=\sqrt{x^* Cx}$ is used to define the so-called $C$-numerical ranges in (\ref{FC}). The advantage of $C$ semi-inner product introduced in the modified Arnoldi algorithm is two-fold:  the null-space component is removed in the Arnoldi iterations and the $C$-numerical range of the operator in the matrix exponentials lies in the right half plane. The numerical range of the upper Hessenberg matrix is properly restricted within a disk with center at $1/2$ and radius $1/2$.  The $C$ semi-inner product as well as the associated Arnoldi algorithm 
  have been employed  for different purposes, e.g,  solving generalized eigenvector problems\cite{ERICSSON, IRA} and  generating stable and passive Arnoldi-based model order reduction\cite{IRA2}.
  
The main contributions are  listed as follows.
With the aid of eigenvectors of $C$ as a basis,
solutions $x(t)$ to PDNs can be  decomposed to a sum of $x_\cR(t)$ and $x_\cN(t)$.
The shift-and-invert  Krylov method in \cite{AWESOME}  computes $x_\cR(t)$, which actually 
captures the dominant transient dynamical behaviors.  The orthonormal basis of Krylov subspace is generated by a quadratic norm with the system matrix to preserve the passivity property of the system, which yields  stable transient simulations. 
The positive definite matrix $G$ guarantees   the $C$-numerical range of $G^{-1} C$ lying the right half plane, which establishes  the convergence to $x_\cR(t)$ as Krylov dimension tends to infinity, including  posterior error bounds and  prior error bounds.
 The shift parameter $\gamma$ in the shift-and-invert method  provides the  flexibility to confine the spectrum of ill-conditioned systems \cite{ericsson1980spectral}.   The error with  
 $\varphi_k$-functions tends to $0$ as the dimension increases.  In the case of $\varphi_0$ computation with $\gamma$ proportional to time step size, the error curve with respect to $\log \gamma$ is a $\cap$-shaped curve. The stagnation in the small $\gamma$ can be significantly improved, when the $\varphi_1$ or $\varphi_2$ computation is introduced,  which is consistent with empirical studies  reported in (\cite{AWESOME}).

The rest of this paper is organized as follows. The differential algebraic equations(DAEs) framework is introduced in Sec.~\ref{Sec_back}. The explicit formulations of solutions in the basis of eigenvectors of $G^{-1} C$ and in the basis of eigenvectors of $C$ are given in section~\ref{Exact_sol_1} and~\ref{Exact_sol_2}, respectively. In the paper, we focus on the computation of the projected solution $x_\cR(t)$. In section~\ref{Krylov}, we introduce Krylov space corresponding to the shift-and invert method, which is used to approximate the solution. In section~\ref{Posterior}, we give a posterior  error bound based on the residual errors and prior error bound. 
In section~\ref{simulations}, we provide simulations on  RLC networks with $G$ only positive semidefinite to validate the effectiveness of the modified shift-and-invert Arnoldi algorithm and examine the error behaviors in computing matrix exponentials. 
%

\subsection{Solutions of nonsingular systems }\label{Sec_back}
 
Suppose that  $C$ is  nonsingular with $A=C^{-1}G$.  The variation-of-constants formula yields the solution $x(t)$ described by 
\beqq\label{1sol}
x(t)=\exp(-tA) x_0+\int_0^t \exp(-(t-s)A) C^{-1} u(s)\,  ds.
\eeqq
Introducing so-called phi-functions,\beqq\label{phix_5}
\varphi_0(z):=\exp(z),\;\varphi_{k+1}(z):=(\varphi_k(z)-(k!)^{-1})/z \textrm{  for $k\ge 0$ },
\eeqq
we can approximate  (\ref{1sol}) under linearization on the source term $C^{-1} u(s)\approx b+b's$ 
 as  a sum of the $\varphi_0$, $\varphi_1$ and $\varphi_2$ terms. 
\beqq\label{phi_x}
x(t+h)\approx \varphi_0(-hA ) x(t)+h \varphi_1(-hA) b+h^2 \varphi_2(-hA)  b',
\eeqq
where $\varphi_0(z)=\exp(z)$ and $\varphi_1(z)=z^{-1}(\exp(z)-1)$.
 One can employ  the shift-and-invert Arnoldi transform to solve one nonsingular differential system as in (\cite{Botchev})
Briefly, let $A=C^{-1}G$ and construct 
the Krylov subspace with respect to $(I+\gamma A)^{-1}$ with a parameter $\gamma>0$, i.e., \[
(I+\gamma A^{-1})^{-1} V_m =V_{m+1} \widetilde H_m,
\]
where  one orthogonal basis matrix $V_m\in \IR^{N\times m}$ and one
 upper-Hessenburg matrix $\widetilde H_m\in \IR^{(m+1)\times m}$ are generated.
 Let $H_m$ be the sub-matrix of $\widetilde H_m$ without the last row. Then the  terms $\varphi_0, \varphi_1$ in (\ref{1sol}) can be approximated by the exponential function of $H_m$, e.g., \beqq\label{solap}
 \exp(-tA) x_0\approx  \|x_0\| \exp(-\gamma t (H_m^{-1}-I_m)) e_1.
 \eeqq

\subsection{Solutions of singular systems } \label{Exact_sol_1}
 A nonsingular matrix $C$ cannot always achieved in general power delivery networks.
  For instance, the nodes without nodal capacitance or inductance would contribute to the algebraic equations and the corresponding matrix $C$ is not invertible. 
 One major impact from the singularity  is that the system in (\ref{sys}) is in fact one combination of differential equations and algebraic equations, i.e.,   $x(t)$ must satisfy the  range condition: $x(t)-G^{-1}u(t)$ in the range of $G^{-1} C$.  
  In addition, since the projection $H_m$ is constructed from an initial vector, without careful and proper handling,   the matrix $H_m$ could become a nearly  degenerate matrix, and (\ref{solap}) boils down to be an erroneous approximation. Hence, it is natural to perform some proper decomposition on $x(t+h)$ based on  nonzero and zero eigenvalues, so that  $H_m$ is not contaminated by null vectors and the solution $x(t)$ can be computed accurately. 
  
We discuss two decompositions to express the solutions. 
Start with the standard approach in differential equations.  (This approach is  listed as  Method 16  in~\cite{Moler03}.)
Let $G^{-1}C=V \Lambda V^{-1}$ be the Joran canonical form decomposition of $G^{-1}C$, where  
\[
\Lambda=\left(
\begin{array}{cc}
 J_\cR & 0 \\
0  & J_\cZ 
\end{array}
\right)\in \IC^{N\times N}
\]
is in Jordan normal form.
 The submatrix  $J_\cR\in \IC^{r\times r}$ consists of a few Jordan blocks  corresponding to  nonzero eigenvalues of $G^{-1}C$ and $J_\cZ\in \IR^{(N-r)\times (N-r)}$ is a nilpotent matrix corresponding to  eigenvalue zero of $G^{-1} C$.  Since the null space of $G^{-1}C$ has dimension $N-n$, the algebraic multiplicity of  the  eigenvalue zero  is not less than $N-n$.
 Write $V=[V_\cR, V_\cZ],\; V_\cZ:=[V_\cG, V_\cN ]$, where  columns of $V_\cR$ and $V_\cZ$ are the (generalized) eigenvectors  of nonzero eigenvalues, respectively.  Columns of $V_\cG $ and $ V_\cN$ are the  generalized eigenvectors  and the eigenvectors of  eigenvalue $0$. That is, columns of $V_\cN$ are the null vectors of $G^{-1} C$. 
 Let $U:=(V^{-1})^*=[U_\cR,  U_\cZ]$, where $A^*$ is the Hermitian transpose of a matrix $A$.  Consider  the solution decomposition,  \beqq \label{apx} x(t)=x_\cR(t)+x_\cZ(t)= V_\cR x_1 (t)+V_\cZ x_2 (t)\eeqq  with some vector functions $x_1 (t), x_2(t)$.
Let 
\beqq\label{GC1}
U^* G^{-1} C V=
\left(
\begin{array}{cc}
 J_\cR & 0 \\
0  & J_\cZ  
\end{array}
\right).
\eeqq
Multiplying with $U^* G^{-1}$
on   (\ref{sys}) yields one differential equation for $x_1$
 \beqq\label{eq15}
J_\cR   \frac{dx_1}{dt}+  x_1=U_\cR^* G^{-1}u(t)
  \eeqq
  and
 \beqq\label{eq15''}
J_\cZ   \frac{dx_2}{dt}+  x_2=U_\cZ^* G^{-1}u(t).
  \eeqq

Focus on (\ref{eq15''}) first. For simplicity, assume that $G^{-1}u(t)$ is a linear function in $t$, i.e. for some constant vectors $w_0, w_1$, \[ U_\cZ^* G^{-1}u(t)=w_0+w_1 t.\] 
The solution  $x_2(t)$ is also linear and can be expressed as 
\[
x_\cZ(t)= V_\cZ x_2(t)=V_\cZ(w_1 t+w_0-J_\cZ w_1)=V_\cZ( U_\cZ^* G^{-1}u(t)-J_\cZ U_\cZ^* G^{-1}\frac{d u(t)}{dt}).
\] 
Return  to (\ref{eq15}).  Let
$\widetilde u(t)=J_\cR^{-1}   U_\cR ^*  G^{-1}   u(t)$.
The solution $x_1(t)$ in  (\ref{eq15}) can be expressed as 
\beqq\label{sol2new}
x_{\cR}(t):=V_\cR x_1(t)= V_\cR  \left\{\exp(-t J_\cR ^{-1}) U_\cR ^* x(0)+\exp(-t J_\cR^{-1}) \int_0^t \exp(sJ_\cR ^{-1}) \widetilde u(s) \, ds\right\}.
\eeqq

\subsection{Solution decomposition under eigenvectors of $C$}\label{Exact_sol_2}
The matrices $V_\cR, U_\cR, J_\cR$ in (\ref{sol2new}) are generally complex-valued, which makes the computation for large PDN systems very challenging.  
Next, we introduce one set of  \textit{real} basis vectors to express the solution in (\ref{sys}), the eigenvectors of $C$. 
Let $C=V_C C_1V_C^\top $ be the eigenvector decomposition of $C$ with $C_1$ diagonal and singular. Let $P_C=V_C V_C^\top$ be the  orthogonal projection matrix on the range of $C$. Also  introduce orthogonal subspaces
$\cR$ and $\cN$,
\begin{eqnarray}
&&\cR:=\{ P_C  x : x\in \IR^N\}, \; \\
&&\cN:=\{ x\in \IR^N:  P_C x=0 \}.
\end{eqnarray}  We employ
 \beqq\label{V_def} V:=[V_\cR,  V_\cN], \; V_\cR=V_C,
 U:=[U_\cR,  U_\cN]=(V^{-1})^\top, 
 \eeqq to decouple the system in (\ref{sys}), where columns of 
$V_\cR \in \IR^{N\times n}$,$V_\cN \in \IR^{N\times (N- n)}$ are  basis vectors  in  $\cR$ and  $\cN$, respectively. 

 Write $G, C$ in block forms,
\beqq\label{GC}
U^\top GV=
\left(
\begin{array}{cc}
 G_1 & G_2 \\
G_3  & G_4  
\end{array}
\right),\;U^\top CV=
\left(
\begin{array}{cc}
 C_1 & 0 \\
 0  & 0  
\end{array}
\right),
\eeqq
where $C_1\in \IR^{n\times n}$ is non-singular, a positive definite and symmetric sub-matrix. 
  Consider  the following solution decomposition,  \beqq \label{apx1} x(t)=x_\cR(t)+x_\cN(t)= V_C x_1 (t)+V_\cN x_2 (t)\eeqq  with some vector functions $x_1 (t), x_2(t)$.
  Applying  $G^{-1}$ on (\ref{sys}) yields \textit{ one range  consistency constraint on $x(t)$ }  that   $x(t)-G^{-1}u(t)$ must lie in the range of $G^{-1}C$, including the initial vector $x(0)$. 
 Actually, from  (\ref{GC}), the system in  (\ref{sys}) is a combination of one differential  system and one algebraic system, i.e., 
\begin{eqnarray}
&&C_1 \frac{dx_1}{dt}=-G_1 x_1-G_2 x_2+(u)_1\label{eq1}\\
 &&G_3 x_1+G_4 x_2=(u)_2. \label{eq2}
\end{eqnarray}

Suppose $G_4$ is invertible.   With (\ref{eq2}), we can  eliminate $x_2$ in  (\ref{eq1}) and reach  one \textit{nonsingular} differential  system  of $x_1$, i.e., 
\begin{eqnarray}
C_1 \frac{dx_1}{dt}&=&-(G_1-G_2 G_4^{-1} G_3) x_1+G_2G_4^{-1}u_2+u_1\\
&=&-(G^{-1})_{1,1}^{-1} x_1+G_2G_4^{-1}u_2+u_1\label{eq61}.
\end{eqnarray}
 Such a system of differential-algebraic equations can also 
 occur in the simulation of mechanical multi-body systems, e.g.\cite{Drazin}. Finally, we can determine $x_\cN$, i.e., $x_2(t)$ from  (\ref{eq2}), if $G_4$ is invertible. 
 Hereafter we shall focus on the computation of $x_1(t)$. 
Keep in mind that the block form in (\ref{GC}) is only of theoretical interest, since the explicit formulation  requires the information of eigenvectors of $C$.  In practical applications of large dimension, the explicit formulation in  (\ref{eq1},\ref{eq2}) is unlikely to be known in advance. 

Next, we introduce one sufficient condition: assume the \textit{ positive definite } property on $G$, which ensures  the   invertibility of $ G_4:=V_\cN^\top G V_\cN$. 

\begin{prop}\label{prop1.2}
Assume that $C, G$ satisfy (\ref{assGC}) with   $v^\top Gv\ge \epsilon \|v\|^2$  for some positive scalar $\epsilon$. 
Let \beqq 
\label{Bdef}
B=G^{-1}C,\; B_{1,1}=V_C^\top  B V_C.\eeqq Then the matrix $B_{1,1}$ is invertible. In addition, the eigenvalue $\lambda$ of $B_{1,1}$ has positive real part. 
\end{prop}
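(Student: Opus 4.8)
The plan is to reduce $B_{1,1}$ to the product of a positive definite matrix with the symmetric positive definite matrix $C_1$, and then to read off both the invertibility and the location of the spectrum from that factorization. Since the columns of $V_C$ are orthonormal eigenvectors of $C$, they satisfy $V_C^\top V_C = I_n$ and $C V_C = V_C C_1$, so I would first rewrite
\[
B_{1,1} = V_C^\top G^{-1} C V_C = (V_C^\top G^{-1} V_C)\, C_1 =: M C_1,
\qquad M := V_C^\top G^{-1} V_C .
\]
Everything then hinges on the matrix $M$.

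The next step is to show that $M$ is positive definite. The hypothesis $v^\top G v \ge \epsilon \|v\|^2$ makes $G$ invertible, and I would transfer positive definiteness to the inverse: for $w \ne 0$, writing $w = G v$ with $v = G^{-1} w \ne 0$ gives $w^\top G^{-1} w = v^\top G^\top v = v^\top G v > 0$, so $G^{-1}$ is positive definite as well. Because $V_C$ has full column rank, $V_C z \ne 0$ for every nonzero $z \in \IR^n$, whence $z^\top M z = (V_C z)^\top G^{-1} (V_C z) > 0$; thus $M$ is positive definite and, in particular, invertible. Since $C_1$ is invertible too, the factorization $B_{1,1} = M C_1$ immediately yields the invertibility of $B_{1,1}$.

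For the spectrum, I would take an eigenpair $B_{1,1} z = \lambda z$ with $z \ne 0$, i.e. $M C_1 z = \lambda z$, premultiply by $z^* C_1$ (legitimate since $C_1$ is real and symmetric), and set $u := C_1 z \ne 0$ to obtain $u^* M u = \lambda\, (z^* C_1 z)$. Taking real parts and using $\Re(u^* M u) = u^* \tfrac{1}{2}(M + M^\top) u > 0$ (the symmetric part of a positive definite matrix is positive definite) together with $z^* C_1 z > 0$, I would conclude $\Re(\lambda)\,(z^* C_1 z) > 0$, hence $\Re(\lambda) > 0$. Equivalently, one may observe that $B_{1,1}$ is similar to $C_1^{1/2} M C_1^{1/2}$ through $B_{1,1} = C_1^{-1/2}\big(C_1^{1/2} M C_1^{1/2}\big) C_1^{1/2}$, whose symmetric part $C_1^{1/2}\,\tfrac{1}{2}(M + M^\top)\,C_1^{1/2}$ is congruent to a positive definite matrix and therefore positive definite; since a real matrix with positive definite symmetric part has all its eigenvalues in the open right half plane, the same holds for $B_{1,1}$.

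The only genuinely delicate point, and the one I would be most careful about, is the transfer of positive definiteness from the possibly non-symmetric $G$ to $G^{-1}$ --- the identity $w^\top G^{-1} w = v^\top G^\top v = v^\top G v$ above relies on a scalar equalling its own transpose, so it must be invoked explicitly rather than by appeal to symmetry of $G$. The remaining ingredients (orthonormality of $V_C$, positive definiteness of the symmetric part of $M$, and $C_1 \succ 0$) are routine linear algebra.
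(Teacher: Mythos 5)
Your proof is correct, and the second half (locating the spectrum) is essentially the paper's own argument in different notation: you premultiply the eigenvalue relation by $z^* C_1$ and take the real part of $u^* M u$ with $u = C_1 z$, which is literally the same quantity as the paper's $(V_C C_1 v)^* G^{-1}(V_C C_1 v)$; both reduce to the positive definiteness of the symmetric part of $G^{-1}$. Where you genuinely diverge is the invertibility step. The paper never argues that $M = V_C^\top G^{-1} V_C$ is positive definite; instead it writes $U^\top G V$ in block form, proves that the block $G_4 = V_\cN^\top G V_\cN$ is invertible from the coercivity hypothesis, and then uses the identities $GH = HG = I$ to show that $H_1 = V_C^\top G^{-1} V_C$ is the inverse of the Schur complement $G_1 - G_2 G_4^{-1} G_3$, whence $B_{1,1}^{-1} = C_1^{-1}(G_1 - G_2 G_4^{-1} G_3)$. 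Your route --- transferring the real quadratic-form positivity from $G$ to $G^{-1}$ via $w^\top G^{-1} w = v^\top G v$ with $w = Gv$, then restricting to the column space of $V_C$ --- is shorter and more elementary, and you are right to flag the scalar-transpose identity as the one step that must be stated explicitly since $G$ is not symmetric. What the paper's longer detour buys is the explicit formula for $B_{1,1}^{-1}$ as $C_1^{-1}$ times the Schur complement, which ties $B_{1,1}$ back to the reduced differential system (\ref{eq61}) and to the invertibility of $G_4$ that is needed elsewhere (e.g.\ for solving the algebraic part $x_\cN(t)$ in Prop.~\ref{case2}); your argument establishes invertibility without producing that formula. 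As a minor point, positive definiteness of the nonsymmetric $M$ in the sense $z^\top M z > 0$ does imply invertibility directly (a null vector would make the form vanish), so your factorization $B_{1,1} = M C_1$ closes the first claim cleanly.
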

\begin{proof} 
We show the invertibility of $G_4$ first. Let $v_2$ be a null vector of $G_4$. Take $v=[0 , v_2^\top ]^\top\in \IR^N $.  Then  $v^\top G v=v_4^\top G_4 v_2=0\ge \epsilon \|v_2\|^2$  implies $v_2=0$, i.e., the invertibility. 
Second, 
multiplying with $V_C^\top  G^{-1}$ on (\ref{sys}) yields one differential equation for $x_1$
 \beqq\label{eq15'}
B_{1,1} \frac{dx_1}{dt}+x_1=V_C^\top  G^{-1} V_C C_1 x_1+x_1=V_C^\top  G^{-1} u.
\eeqq
Let $H=G^{-1}$. With $V=[V_C, V_\cN]$, write $V^\top H V=\left(
\begin{array}{cc}
 H_1 & H_2 \\
H_3  & H_4  
\end{array}
\right).$  Since  $B_{1,1}=V_C^\top  B V_C=V_C^\top  G^{-1} V_C C_1=H_1 C_1$, we can calculate  one  explicit form for $H_1^{-1}$. Indeed,   $GH=I$ gives $H_3=G_4^{-1} G_3 H_1$ and $(G_1-G_2 G_4^{-1} G_3)H_1=$ the identity matrix. 
Likewise, $HG=I$ gives $H_1(G_1-G_2 G_4^{-1} G_3)=$ the identity matrix. Therefore,   $G_1-G_2 G_4^{-1} G_3$ is   $H_1^{-1}$, 
and thus the invertibility of $B_{1,1}$ is verified,  
\[
B_{1,1}^{-1}=C_1^{-1} (V_C^\top  G^{-1} V_C)^{-1}= C_1^{-1} (G_1-G_2 G_4^{-1} G_3).
\] 
Lastly, let $v$ be one eigenvector of $B_{1,1}$ corresponding to eigenvalue $\lambda$. Choosing  $U_\cR=V_\cR=V_C$, \[
\lambda  v= V_C^\top  B V_C v=V_C^\top  G^{-1} C V_C v=V_C^\top G^{-1} V_C C_1 v
\] 
implies
\[
\lambda   v^*C_1 v= (V_C C_1 v)^*   G^{-1} V_C C_1 v
\] 
and thus the positive real part is verified by  
\[
\Re(\lambda)   v^*C_1 v=\frac{1}{2} (G^{-1}V_C C_1 v)^*   (G^\top +G) (G^{-1} V_CC_1 v).
\]
 \end{proof}
%
%
%

With the above proposition, we can derive the solution to (\ref{eq15'}) as stated below.

\begin{prop}\label{case2}  Assume that $C, G$ satisfy  (\ref{assGC}).  Let $V:=[V_C, V_\cN]$ in (\ref{V_def}).
Let $B_{1,1}:= V_C^\top  G^{-1} C V_C$.
 Let $\widetilde u:=(B_{1,1})^{-1} V_C^\top  G^{-1} u$. Then  the projected  solution $x_\cR(t)$  is given by
\beqq\label{sol2}
x_{\cR}(t):=V_C x_1(t)= V_C  \left\{\exp(-t B_{1,1}^{-1}) V_C ^\top x(0)+\exp(-t B_{1,1}^{-1}) \int_0^t \exp(s B_{1,1}^{-1}) \widetilde u(s) \, ds\right\}.
\eeqq
In addition,  the projected  solution $x_\cN(t)$  is given by
\beqq\label{sol2'}
x_{\cN}(t):=V_\cN x_2(t)=V_\cN  (U_\cN^\top  G V_\cN)^{-1}U_\cN^\top (u(t)-G V_C x_1(t)).
\eeqq

\end{prop}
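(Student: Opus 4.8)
The plan is to treat the range and null-space components separately, since Proposition~\ref{prop1.2} has already reduced the dynamics on the range to a nonsingular linear ODE, while the complementary component is pinned down by an algebraic relation. The structural facts I would rely on are that $V_C$ has orthonormal columns (so $V_C^\top V_C=I_n$), that the columns of $V_\cN$ span $\cN$ so that $P_C V_\cN=V_C V_C^\top V_\cN=0$ forces $V_C^\top V_\cN=0$, and that $CV_C=V_CC_1$, $CV_\cN=0$.

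For the range component $x_\cR(t)=V_C x_1(t)$ I would start from the decoupled equation~(\ref{eq15'}), namely $B_{1,1}\frac{dx_1}{dt}+x_1=V_C^\top G^{-1}u$, obtained by left-multiplying (\ref{sys}) by $V_C^\top G^{-1}$ and using the identities above. Since $B_{1,1}$ is invertible by Proposition~\ref{prop1.2}, multiplying through by $B_{1,1}^{-1}$ gives the constant-coefficient system $\frac{dx_1}{dt}+B_{1,1}^{-1}x_1=\widetilde u$ with $\widetilde u=B_{1,1}^{-1}V_C^\top G^{-1}u$. Variation of constants then yields $x_1(t)=\exp(-tB_{1,1}^{-1})x_1(0)+\int_0^t\exp(-(t-s)B_{1,1}^{-1})\widetilde u(s)\,ds$, and because every factor is a function of the single matrix $B_{1,1}^{-1}$ they commute, so $\exp(-(t-s)B_{1,1}^{-1})=\exp(-tB_{1,1}^{-1})\exp(sB_{1,1}^{-1})$, which is precisely the form displayed in~(\ref{sol2}). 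Applying $V_C^\top$ to $x(0)=V_Cx_1(0)+V_\cN x_2(0)$ and using $V_C^\top V_C=I_n$, $V_C^\top V_\cN=0$ identifies the initial datum as $x_1(0)=V_C^\top x(0)$; left-multiplying by $V_C$ gives~(\ref{sol2}).

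For the null-space component $x_\cN(t)=V_\cN x_2(t)$ I would project (\ref{sys}) by left-multiplying with $U_\cN^\top$. The decisive point is that $U_\cN^\top C=0$: from the block form $U^\top CV$ in~(\ref{GC}) its lower block row vanishes, so $U_\cN^\top CV=0$ and hence $U_\cN^\top C=0$ since $V$ is invertible. Thus the time-derivative term is annihilated and only the algebraic relation $U_\cN^\top Gx=U_\cN^\top u$ survives. Substituting $x=V_Cx_1+V_\cN x_2$ gives $(U_\cN^\top GV_\cN)x_2=U_\cN^\top(u-GV_Cx_1)$, and the coefficient $U_\cN^\top GV_\cN$ is exactly the block $G_4$ of~(\ref{GC}), shown invertible in Proposition~\ref{prop1.2}. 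Solving for $x_2$ and left-multiplying by $V_\cN$ produces~(\ref{sol2'}).

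The variation-of-constants computation and the final substitution are routine, so the real content lies in the structural identities. The main thing to check with care is that the time-derivative term genuinely drops out under the $U_\cN^\top$ projection (i.e. $U_\cN^\top C=0$) while the coefficient matrix governing $x_2$ is the already-invertible $G_4$; both rest on the block structure~(\ref{GC}) and on $CV_\cN=0$. Beyond this bookkeeping and the commutativity used to rewrite the convolution integral, no genuine obstacle remains.
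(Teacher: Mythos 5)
Your proposal is correct and follows essentially the same route the paper takes: the range component comes from the decoupled nonsingular ODE (\ref{eq15'}) (equivalently (\ref{eq1})) via variation of constants with $x_1(0)=V_C^\top x(0)$, and the null-space component comes from the algebraic relation (\ref{eq2}) obtained by the $U_\cN^\top$ projection, using the invertibility of $G_4=U_\cN^\top GV_\cN$ established in Proposition~\ref{prop1.2}. The structural identities you verify ($U_\cN^\top C=0$, $CV_\cN=0$, $V_C^\top V_\cN=0$) are exactly the content of the block form (\ref{GC}) that the paper relies on implicitly.
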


%
%
\begin{rem}
Suppose $G_4$ is invertible.
 Suppose $\widetilde u(s)$ is linear, i.e.,  with  some  vectors $\widetilde u(0), \widetilde u'(0)=\frac{d\widetilde u}{ds}(0)$,  we have\[ \widetilde u(s)=\widetilde u(0)+s \widetilde u'(0).\]
Then the second term in (\ref{sol2}) can be further simplified, i.e., 
\begin{eqnarray}
&&\exp(-t B_{1,1}^{-1}) \int_0^t \exp(s B_{1,1}^{-1}) \widetilde u(s) \, ds\\
&=&B_{1,1}\{ \widetilde u(t)-\exp(-t B_{1,1}^{-1}) \widetilde u(0)\}-B_{1,1}^2 (I-\exp(-t B_{1,1}^{-1})) \widetilde u'(0)
\\
&=&(-B_{1,1})\{ -I+\exp(-t B_{1,1}^{-1}) \} \widetilde u(0)+B_{1,1}^2 (-I+B_{1,1}^{-1} t+\exp(-t B_{1,1}^{-1})) \widetilde u'(0)\\
&=&t \varphi_1 (-t B_{1,1}^{-1}) \widetilde u(0)+t^2\varphi_2(-t B_{1,1}^{-1}) \widetilde u'(0).
\end{eqnarray}
Recall $\widetilde u(t)=(B_{1,1})^{-1}   V_C ^\top  G^{-1}   u(t)$. Thus, the  projected solution $V_C  V_C ^\top x(t)$ is given by 
\beqq\label{solx3}
 x_\cR (t)= \left\{ V_C  \exp(-t B_{1,1}^{-1}) V_C ^\top  x(0)+ t   V_C   B_{1,1}^{-1} \varphi_1 (-t B_{1,1}^{-1}) V_C ^\top  G^{-1}    u(0)+t^2  V_C   B_{1,1}^{-1}  \varphi_2(-t B_{1,1}^{-1}) 
V_C ^\top  G^{-1} u'(0)\right\}
\eeqq
\end{rem}

\begin{rem}
What happens if $G_4$ is not invertible?
This is one limitation of the decomposition  described in section~\ref{Exact_sol_2}: when $G_4$ is not invertible, then $B_{1,1}$ has   rank less than $n$ and $B$ can have generalized eigenvectors (in addition to null vectors) corresponding to eigenvalue $0$. 
      Non-invertibility of $G_4$ will lead to the  dimension decreases,   $\textrm{rank} (P_C G^{-1}C)<  \textrm{rank} (C)$, i.e., $V_\cR+V_\cN\neq \IR^N$.     
Actually, when  $G_4$ is not invertible, i.e.,  $G_4y=0$ for some nonzero vector $y$,    a zero eigenvalue of algebraic multiplicity for  $G^{-1} C$ is greater than its geometric multiplicity. The Jordan normal form of $B=G^{-1} C$ can have  eigenvalue with  has order $2$. 
More discussions can be found in
 Theorem 1 in \cite{IRA} and Theorem 2.7 in \cite{ERICSSON}. Further analysis on this issue   is beyond the scope of the current paper. 
\end{rem} 

\subsection{Krylov subspace approximation}\label{Krylov}

Since $G^{-1} C$ is well-defined, it is intuitive to 
apply  the  shift-and-invert Arnoldi iterations to compute  the requisite  matrix exponentials    in solving  (\ref{sys}) with  \textit{ singular } $C$. 
To compute $x_\cR(t)$ from   (\ref{sol2})  or (\ref{solx3})   for a large singular system in (\ref{sys}),
we shall design one $m$-dimensional Arnoldi algorithm to construct a low-dimensional rational Krylov subspace approximation
of the matrix exponential 
 of $B_{1,1}:=V_C^\top  G^{-1} C V_C$.

  Rational Krylov algorithms were originally developed for computing eigenvalues and eigenvectors of large matrices\cite{RUHE84}. 
 Unlike polynomial approximants,   rational best approximants of $\exp(-x)$ can converge geometrically in the domain $[0, \infty)$~\cite{CODY69}. 
Rational Krylov subspace method is a   very promising manner in 
computing  
 matrix exponentials $\phi_k(-t A)$ acting on a vector $v$,  
  when the numerical range of $A$ is located somewhere in the right half complex plane.
  Typically, the numerical range of the matrix $B_{1,1}$ does not completely lie in the right half plane. 
    In~\cite{AWESOME}, a new Arnoldi scheme with structured orthogonalization  is introduced to generate one stable Krylov subspace
 and to  compute  matrix exponentials.   The orthogonality is based on the positive semi-definite matrix $C$.
  The  orthogonality induced by the $C$ semi-inner product actually  plays a fundamental role in enforcing the numerical range of the operator in the right half plane under the assumption in (\ref{assGC}).

 \subsubsection{Shift-and-invert methods}

 
 \begin{rem} 
Fix  some parameter $\gamma>0$.  The shift-and-invert method approximates  $\phi_k(-t A) v$
 in the resolvent Krylov subspace, 
 \[
 span\{v, (\gamma I+A)^{-1} v, \ldots, (\gamma I+A)^{-(m-1)} v\}.
 \]
As one reference, we list the result for the nonsingular case.  Let $A=C^{-1} G$. The standard Arnoldi iterations are used to construct $(V_m, H_m)$ from 
\[
(C+\gamma G)^{-1} C V_m=V_m H_m +h_{m+1,m} v_{m+1} e_m^\top,
\] where columns of $V_m$ are a set of   orthogonal vectors of  $m$-dimensional Krylov subspace induced by  $(C+\gamma G)^{-1} C$ and $H_m$  satisfies 
\[
H_m =V_m^\top (C+\gamma G)^{-1} C V_m.
\] 
When $h_{m+1,m}=0$,  $(C+\gamma G)^{-1} C$ can be approximated by $V_m H_m V_m^\top$ and 
 then the matrix exponential can be approximated by  \[
\exp(-tA) v\approx \|v\| V_m \exp(t (I-H_m^{-1})/\gamma) e_1.
\]
\end{rem}

 \begin{definition}
 To estimate the eigen-structure of $G^{-1}C$ subject to $\cR$, we introduce a few matrices $S, S_{1,1}$ associated to $ B$,
 \begin{eqnarray}
&&
   S:= P_C (C+\gamma G)^{-1} C,\;    \widetilde S:=  (C+\gamma G)^{-1} C,\;  \label{S_def}
 \\
&&  S_{1,1}:=V_C ^\top  SV_C=V_C ^\top  \widetilde SV_C, \label{eq27}
\; \gamma>0. 
\end{eqnarray}
\end{definition}

Let $W_m:=[ w_1, w_2, \ldots, w_m]$ be one low-dimensional   subspace in the range of $P_C G^{-1} C$ and
$H_m$ be one upper Hessenburg matrix $H_m$
corresponding to the projection of  $P_C G^{-1} C$  on $W_m$,
 where   $\{ w_1, w_2, \ldots, w_m\} \in \IR^{N}$ with  $C$-orthogonality span
one Krylov  subspace from the operator $S$,
 \[
   span\{w_1, S w_1, S^2 w_1, \ldots S^{m-1} w_1\}=span\{w_1, w_2, \ldots, w_m\}.
   \]
The algorithm to generate $(W_m, H_m)$ is stated in Algorithm~\ref{algo_ortho_arnoldi_singular}.
Empirically we use the Arnoldi iterations in (\ref{eq29})  to compute 
  $\widetilde W_m$ and $H_m$
  instead. 
  Prop.~\ref{prop1.6} suggests computation  of the  approximate $x_a(t)$ in   (\ref{eq52}) is involved with 
   one single operation $P_C$.
     Since     $W_m$ is the  projection of $\widetilde W_m$ under $P_C$,  the upper Hessenburg matrix $H_m$ are identical.  Then    the matrix exponentials can be approximated by 
  (\ref{eq52}), where only one $P_C$ projection is applied.   
Observe that when  $h_{m+1,m}=0$ in  (\ref{eq28}),  then  we have $S=W_m H_m  W_m^\top C$, which suggests 
the  approximation  $W_m H_m  W_m^\top C$ of $S$.  The proof is straightforward, thus omitted.

   \begin{prop}\label{prop1.6}
   Consider the following two  $C$-orthogonal   Arnoldi iterations to generate $(W_m, H_m)$ and $(\widetilde W_m, \widetilde H_m)$ from $S$ and $\widetilde S$, respectively: 
    \begin{eqnarray}\label{cV}
&&
S W_m=W_m H_m+h_{m+1,m} w_{m+1} e_m^\top,\label{eq28}\\
&&
\widetilde S \widetilde W_m=\widetilde W_m \widetilde H_m+\widetilde h_{m+1,m} \widetilde w_{m+1} e_m^\top,\label{eq29}
\end{eqnarray}
where  columns of $W_m$ and $\widetilde W_m$ both form two sets of  $C$-orthonormal vectors \[ W_m=[w_1, w_2,\ldots, w_m], \widetilde W_m=[\widetilde w_1, \widetilde w_2,\ldots, \widetilde w_m], \, W_m^\top  C W_m=\widetilde W_m^\top  C \widetilde W_m=I.\]
\begin{itemize}
\item Suppose the first column of $W_m$ lies in   the range of $P_C G^{-1} C$. Then 
all columns of $W_m$  lie  in the range of $P_C G^{-1} C$.
\item Suppose the first column of $\widetilde W_m$ lies in the range of $G^{-1} C$. Then 
all columns of $\widetilde W_m$  lie in the range of $G^{-1} C$.
\item Suppose $(\widetilde W_m, \widetilde H_m)$ satisfies (\ref{eq29}). Let $W_m=P_C \widetilde W_m$ and $H_m=\widetilde H_m$.  Then $(W_m, H_m)$ satisfies (\ref{eq28}). 
\end{itemize}
\end{prop}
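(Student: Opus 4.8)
The plan is to reduce all three parts to two algebraic identities about the operators in~(\ref{S_def}), after which every conclusion follows from the Arnoldi recurrences~(\ref{eq28})--(\ref{eq29}) by a routine induction. First I would record two preliminary facts. Since $\gamma>0$, $C$ is positive semidefinite, and $v^\top G v\ge\epsilon\|v\|^2$, the matrix $C+\gamma G$ is invertible because $\Re\big(x^*(C+\gamma G)x\big)\ge\gamma\epsilon\|x\|^2>0$ for $x\neq0$; hence $\widetilde S$ and $S$ are well defined. Writing $C+\gamma G=G(\gamma I+B)$ with $B=G^{-1}C$ gives the factorization $\widetilde S=(C+\gamma G)^{-1}C=(\gamma I+B)^{-1}B=B(\gamma I+B)^{-1}$, so $\widetilde S$ is a rational function of $B$; in particular $\mathrm{range}(\widetilde S)\subseteq\mathrm{range}(B)=\mathrm{range}(G^{-1}C)$. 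Using $C=V_C C_1V_C^\top$ and $V_C^\top V_C=I$ one checks $V_C^\top(C+\gamma G)^{-1}C=S_{1,1}V_C^\top$ (see~(\ref{eq27})), whence $S=P_C\widetilde S=V_C S_{1,1}V_C^\top$; since $V_C^\top P_C=V_C^\top$ this yields the key intertwining identities $SP_C=S$ and $P_C\widetilde S P_C=P_C\widetilde S$, equivalently $P_C\widetilde S(I-P_C)=0$, together with $\mathrm{range}(S)\subseteq\mathrm{range}(V_C)=\cR$.

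For the second bullet I would argue by induction on the columns. The recurrence~(\ref{eq29}) gives $\widetilde w_{j+1}=\widetilde h_{j+1,j}^{-1}\big(\widetilde S\widetilde w_j-\sum_{i\le j}\widetilde h_{ij}\widetilde w_i\big)$ whenever $\widetilde h_{j+1,j}\neq0$. The base case is the hypothesis $\widetilde w_1\in\mathrm{range}(B)$; for the inductive step, $\widetilde S\widetilde w_j\in\mathrm{range}(\widetilde S)\subseteq\mathrm{range}(B)$ and the $\widetilde w_i$ with $i\le j$ lie in $\mathrm{range}(B)$ by hypothesis, so $\widetilde w_{j+1}\in\mathrm{range}(B)$. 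The first bullet is identical, with $\widetilde S$ replaced by $S$ and $\mathrm{range}(B)$ replaced by $\mathrm{range}(P_CG^{-1}C)$: under assumption~(\ref{assGC}) and Prop.~\ref{prop1.2}, $B_{1,1}$ is invertible, and since $P_CG^{-1}C=V_CB_{1,1}V_C^\top$ we have $\mathrm{range}(P_CG^{-1}C)=V_C\,\mathrm{range}(B_{1,1})=\cR$; then $\mathrm{range}(S)\subseteq\cR$ and the same induction applies.

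For the third bullet I would set $W_m=P_C\widetilde W_m$, $H_m=\widetilde H_m$, $w_{m+1}=P_C\widetilde w_{m+1}$, and $h_{m+1,m}=\widetilde h_{m+1,m}$, and verify the two defining properties of~(\ref{eq28}). The $C$-orthonormality is preserved because $C$ is symmetric with $\mathrm{range}(C)=\cR$, so $CP_C=P_CC=C$ and $P_CCP_C=C$; hence $W_m^\top CW_m=\widetilde W_m^\top P_C C P_C\widetilde W_m=\widetilde W_m^\top C\widetilde W_m=I$. For the recurrence I would left-multiply~(\ref{eq29}) by $P_C$ and invoke the intertwining identity: $P_C\widetilde S\widetilde W_m=P_C\widetilde S P_C\widetilde W_m=SP_C\widetilde W_m=SW_m$, using $P_C\widetilde S(I-P_C)=0$. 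The right-hand side of~(\ref{eq29}) becomes $P_C\widetilde W_m\widetilde H_m+\widetilde h_{m+1,m}(P_C\widetilde w_{m+1})e_m^\top=W_mH_m+h_{m+1,m}w_{m+1}e_m^\top$, which is exactly~(\ref{eq28}).

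The only step that is not pure bookkeeping is the intertwining identity $P_C\widetilde S(I-P_C)=0$, i.e.\ that projecting the resolvent operator onto $\cR$ annihilates its $\cN$-component before it acts; this is the crux and rests on the factorization $S=V_CS_{1,1}V_C^\top$ extracted from $C=V_CC_1V_C^\top$. I would therefore establish that identity carefully at the outset, confirm the invertibility of $C+\gamma G$ (hence of $\gamma I+B$) first, and treat the remaining inductions and the orthonormality check as routine.
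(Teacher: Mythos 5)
Your argument is correct, and it supplies exactly the verification the paper declares ``straightforward, thus omitted'': the factorization $\widetilde S=(\gamma I+B)^{-1}B$ with $B=G^{-1}C$, the identity $S=P_C\widetilde S=V_CS_{1,1}V_C^\top=SP_C$ (so $P_C\widetilde S(I-P_C)=0$), and a column-by-column induction through the Arnoldi recurrence handle all three bullets, while $P_CCP_C=C$ preserves the $C$-orthonormality in the third. One small simplification for the first bullet: you do not need Prop.~\ref{prop1.2} or the invertibility of $B_{1,1}$ there, since $S=P_CB(\gamma I+B)^{-1}$ gives $\mathrm{range}(S)\subseteq\mathrm{range}(P_CG^{-1}C)$ directly, so the induction closes without identifying that range with all of $\cR$.
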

%
%
 The $C$-orthogonality together with the positive definite assumption of $G$ indicates the  passivity property of $H_m$ and the invertibility. This is also known as the stability condition\cite{IRA2}.

\begin{algorithm} [hbt]
	\caption{ {\bf An Arnoldi algorithm with explicit structured orthogonalization  and implicit regularization}\cite{AWESOME}}
	\label{algo_ortho_arnoldi_singular} 
	\small
	\KwIn { $C, G, k, \gamma,   w,  m$  }
	\KwOut {$H_m, W_m$}
	{   
		Set $w =P_C w$\;
		$ w_1 = \frac{w }{\lVert  w\rVert_{C}}$ where $\lVert  w\rVert_{C} = \sqrt{w^\top {C}w}$ and $w_1^T\mathcal{C}w_1 = 1$ \;
		\For {$j=1:m$}
		{
			Solve $(\gamma G+C) w =  C w_j$ and obtain $w$\; 
			\label{algo_construct_line}
			Set $w = P_C w$\;
			\For {$i = 1:j$}
			{
				$h_{i,j} =  w^\top C w_{i}$\;
				$  w = w - h_{i,j} w_{i}$\;
			}
			$h_{j+1,j} = \lVert  w \rVert _\mathcal{C}$\;
			$w_{j+1} = \frac{ w }{h_{j+1,j}} $\;
			\If{residual $<$ tolerance} {
				Results converge at dimension $m$\;
			}
		}
	}	
\end{algorithm}

 \begin{rem}[Passivity  property] Assume $G,C$  given in (\ref{assGC}).
 The advantage of the $C$-orthogonal iterations in (\ref{cV}) lies in the preservation of the passivity property of $H_m$, i.e., 
  all eigenvalues of  $H_m$ have non-negative  real components. In particular, with $G$ positive definite, we have the invertibility of $H_m$, which is crucial to the algorithm as well as the error analysis.    Indeed, since  observe that (\ref{cV}) implies \beqq\label{H_def1}
 W_m^\top C S W_m= W_m^\top C P_C (C+\gamma G)^{-1}C W_m=W_m^\top C (C+\gamma G)^{-1}C W_m=H_m.
 \eeqq
 Then  for each nonzero vector $x\in \IR^m$, with $y:=(C+\gamma G)^{-1} (CW_m x)\in \IR^N$, we have
 \[
 \langle x, H_m x\rangle =  (CW_m x)^\top  (C+\gamma G)^{-1} (CW_m x)= y^\top (C+\gamma G) y\ge  0.
 \]
\end{rem}

The following  shows the relation between  $B_{1,1}$ and $S_{1,1}$.
\begin{prop} \label{prop1.8}Suppose $G$ is postive definite.  Let $\gamma>0$, and introduce the function $g:\IC\to \IC$ and its inverse $g_1$,  \[ \lambda=g(\mu)=(1+\gamma\mu^{-1})^{-1},\mu=g_1(\lambda):=g^{-1}(\lambda)=((\lambda^{-1}-1)/\gamma)^{-1}.\]
Then 
\beqq\label{BS}
  B_{1,1}=g^{-1}( S_{1,1}),\; S_{1,1}=g(B_{1,1}).\eeqq
\end{prop}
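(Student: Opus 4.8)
The plan is to reduce the statement to a single global matrix identity $\widetilde S = g(B)$ and then transfer it to the compressions $B_{1,1},S_{1,1}$ by exploiting the block structure that the basis $V=[V_C,V_\cN]$ imposes through $CV_\cN=0$. Throughout write $g(\mu)=\mu(\mu+\gamma)^{-1}$ and recall $V^{-1}=U^\top$ and $U_\cR=V_\cR=V_C$, so that $V_C^\top V_C=I$ and $V_C^\top V_\cN=0$.

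First I would establish the matrix-level identity $\widetilde S=(C+\gamma G)^{-1}C=g(B)$ as $N\times N$ matrices. Since $G$ is positive definite and $C$ positive semidefinite, $C+\gamma G$ is positive definite for $\gamma>0$ and hence invertible; factoring $C+\gamma G=G(B+\gamma I)$ via $B=G^{-1}C$ shows $B+\gamma I=G^{-1}(C+\gamma G)$ is invertible as well. Then $C=GB$ gives $\widetilde S=(C+\gamma G)^{-1}C=(B+\gamma I)^{-1}G^{-1}GB=(B+\gamma I)^{-1}B=g(B)$, where the two factors commute and may be ordered freely.

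Next I would transfer this to the compression. Because $V^{-1}=U^\top$, the representation of $B$ in the $V$-basis is $\hat B:=U^\top B V=(U^\top G^{-1}V)(V^{-1}CV)=\hat G^{-1}\hat C$, where $\hat G:=U^\top GV$ is the block matrix of (\ref{GC}) and $\hat C:=U^\top CV=\begin{pmatrix}C_1&0\\0&0\end{pmatrix}$. Since $\hat C$ has a vanishing second block-column (the columns indexed by $\cN=\ker C$), the product $\hat B=\begin{pmatrix}B_{1,1}&0\\ \ast&0\end{pmatrix}$ is block lower-triangular with top-left block exactly $B_{1,1}=V_C^\top G^{-1}CV_C$ (consistent with $B_{1,1}=H_1C_1$ from Proposition \ref{prop1.2}). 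Such matrices are closed under the functional calculus defining $g$: writing $\hat B+\gamma I=\begin{pmatrix}B_{1,1}+\gamma I&0\\ \ast&\gamma I\end{pmatrix}$ and performing the $2\times2$ block inversion, the zero top-right corner of $\hat B$ forces $g(\hat B)=\hat B(\hat B+\gamma I)^{-1}$ to be again block lower-triangular with top-left block $B_{1,1}(B_{1,1}+\gamma I)^{-1}=g(B_{1,1})$. On the other hand, similarity gives $g(\hat B)=U^\top g(B)V=U^\top\widetilde SV$, whose top-left block is precisely $S_{1,1}=V_C^\top\widetilde SV_C$ by (\ref{eq27}). Comparing top-left blocks yields $S_{1,1}=g(B_{1,1})$. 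To obtain the inverse relation I would invoke Proposition \ref{prop1.2}: every eigenvalue $\lambda$ of $B_{1,1}$ has $\Re(\lambda)>0$, so $g(\lambda)=\lambda/(\lambda+\gamma)=1$ is impossible; by spectral mapping $1\notin\operatorname{spec}(S_{1,1})$, hence $I-S_{1,1}$ is invertible and $g^{-1}(\sigma)=\gamma\sigma/(1-\sigma)$ applies spectrally to give $B_{1,1}=g^{-1}(S_{1,1})$.

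The main obstacle is the legitimacy of the compression step, i.e.\ passing from the global identity $\widetilde S=g(B)$ to $S_{1,1}=g(B_{1,1})$. Compression does not commute with functional calculus for a general operator and a general function, so this must be justified rather than assumed. Here it succeeds only because $CV_\cN=0$ produces the zero second block-column in $\hat B$, which makes the top-left block of every power of $\hat B$ (and therefore of the rational map $g$) depend solely on $B_{1,1}$; verifying and exploiting this block-triangular structure is the crux of the argument.
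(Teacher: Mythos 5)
Your proof is correct and rests on the same underlying mechanism as the paper's, but it is organized differently and is noticeably more careful. The paper works directly on the compressions: it writes $S_{1,1}=V_C^\top(G^{-1}C+\gamma I)^{-1}V_C\,V_C^\top G^{-1}CV_C$ by inserting $P_C=V_CV_C^\top$ in the middle of the product, and then identifies $V_C^\top(B+\gamma I)^{-1}V_C$ with $(B_{1,1}+\gamma I)^{-1}$ --- two steps that are asserted without justification and that would fail for generic matrices. You instead first prove the global identity $\widetilde S=(B+\gamma I)^{-1}B=g(B)$ and then pass to the $(1,1)$ block by exhibiting the block lower-triangular form of $U^\top BV$ forced by $CV_\cN=0$; that triangular structure is exactly the fact which legitimizes both of the paper's unexplained manipulations, so the ``crux'' you isolate supplies the missing justification rather than taking a detour. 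Both arguments deliver the same conclusion; yours additionally makes transparent why compression commutes with the rational calculus of $g$ in this particular situation, and your derivation of $B_{1,1}=g^{-1}(S_{1,1})$ from the eigenvalues of $B_{1,1}$ having positive real part (so that $1$ is not an eigenvalue of $S_{1,1}$) is a legitimate substitute for the paper's appeal to Proposition~\ref{prop1.2} for invertibility.
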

\begin{proof} 
By Prop.~\ref{prop1.2}, $B_{1,1}$ is invertible.
Let $T:=V_C^\top  G^{-1} V_C$ and $C_1=V_C^\top  C V_C$. 
Then   \beqq
  B_{1,1}=V_C^\top  G^{-1}C V_C 
= T C_1,
\eeqq
\begin{eqnarray}
 S_{1,1}&=& V_C ^\top  (G^{-1}(C+\gamma G))^{-1}   G^{-1}C V_C\\
& =& 
V_C ^\top  (G^{-1}C+\gamma I))^{-1} V_C V_C^\top    G^{-1}C V_C\\
 &=& (T C_1 +\gamma I)^{-1} TC_1 
 =g( B_{1,1}).
\end{eqnarray}
\end{proof}

Introduce a few notations.
  Let $g,g_1$ be given in Prop.~\ref{prop1.8} and \beqq
  \label{def_f} f(\lambda):=\varphi_0(-t (g^{-1}(\lambda))^{-1})=\varphi_0(-t g_1(\lambda)^{-1} )
  \eeqq
  and let \beqq\label{f1}
f_k(\lambda):= g_1(\lambda)^{-1} \varphi_k(-t g_1(\lambda)^{-1})
 \textrm{ for $k=1,2$.}
\eeqq
Now, we are ready to state 
 one approximation  $x_a(t)$ for  $x_\cR (t)$ in (\ref{solx3}). The error analysis will be given in next section.
  \begin{thm}
Let $(\widetilde W_m, H_m)$ and $( W_m, H_m)$ be generated from Arnoldi iterations with respect to $\widetilde S$ and $S$ in Prop.~\ref{prop1.6}.
Let 
\begin{eqnarray}\label{x_apr}
&& x_a(t):=W_m \left\{ f(  H_m) W_m^\top  C  x(0)+ 
  t   f_1 (H_m) W_m^\top  C u(0)+t^2   f_2( H_m) 
W_m^\top C u'(0) \right\}\\
&=& P_C \widetilde W_m \left\{ f(  H_m) \widetilde W_m^\top  C  x(0)+ 
  t   f_1 (H_m) \widetilde W_m^\top  C u(0)+t^2   f_2( H_m) 
\widetilde W_m^\top C u'(0) \right\}\label{x_apr_1}\label{eq52}
\end{eqnarray}
Suppose $x(0), u(0)$ and $u'(0)$ all lie in the range of $W_m$ and $h_{m+1,m}=0$.
Then $x_a(t)=x_\cR(t)$.
  \end{thm}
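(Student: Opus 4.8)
The plan is to prove the identity term by term against the decomposition (\ref{solx3}) of $x_\cR(t)$, matching each $\varphi_k$-contribution of $x_a(t)$ in (\ref{x_apr}) with the corresponding one of $x_\cR(t)$. Two ingredients drive everything: the \emph{exactness} forced by $h_{m+1,m}=0$, and the transfer of the scalar functions $f,f_1,f_2$ through the reduced matrix $H_m$ by functional calculus. First I would record that, by the first bullet of Prop.~\ref{prop1.6}, the columns of $W_m$ lie in the range of $P_C G^{-1}C\subseteq\cR=\mathrm{range}(V_C)$, so I may write $W_m=V_C Y_m$ with $Y_m:=V_C^\top W_m\in\IR^{n\times m}$; the $C$-orthonormality $W_m^\top C W_m=I$ then reads $Y_m^\top C_1 Y_m=I$, where $C_1=V_C^\top C V_C$.

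Next I would turn the Arnoldi relation with $h_{m+1,m}=0$, namely $S W_m=W_m H_m$, into an intertwining between the $\cR$-restricted operators. Using $S=P_C\widetilde S=V_C V_C^\top\widetilde S$ and $S_{1,1}=V_C^\top\widetilde S V_C$ from (\ref{eq27}), I get $V_C S_{1,1}Y_m=V_C Y_m H_m$, and cancelling the full column rank $V_C$ yields $S_{1,1}Y_m=Y_m H_m$. Hence $\mathrm{range}(Y_m)$ is $S_{1,1}$-invariant and $H_m$ represents $S_{1,1}$ on it, so $\psi(S_{1,1})Y_m=Y_m\psi(H_m)$ for every $\psi$ analytic on a neighbourhood of $\sigma(H_m)$. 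By Prop.~\ref{prop1.8} we have $g_1(S_{1,1})=B_{1,1}$, and the passivity remark together with $G$ positive definite guarantees that $H_m$ is invertible with spectrum in the disk of centre $1/2$ and radius $1/2$; in particular $\sigma(H_m)$ is bounded away from both $0$ and $1$, the only singularities of $g_1$ and of the compositions defining $f,f_1,f_2$. The transfer therefore applies to $\psi\in\{f,f_1,f_2\}$, and I can read off the left-hand sides explicitly as $f(S_{1,1})=\exp(-tB_{1,1}^{-1})$ and $f_k(S_{1,1})=B_{1,1}^{-1}\varphi_k(-tB_{1,1}^{-1})$.

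The final step is to exploit the range hypothesis. Since $Y_m^\top C_1 Y_m=I$, the operator $W_m W_m^\top C=V_C Y_m Y_m^\top V_C^\top C$ is the $C$-orthogonal projector onto $\mathrm{range}(W_m)$; because $x(0)$ and the (resolvent-transformed) source data are assumed to lie in $\mathrm{range}(W_m)$, writing such a vector as $V_C Y_m c$ shows that $W_m^\top C$ returns exactly its coordinate vector $c$ while $V_C^\top$ returns $Y_m c$. Substituting $W_m=V_C Y_m$ into (\ref{x_apr}), pushing the analytic functions through $Y_m$ by the intertwining above, and replacing the projected data by their coordinate vectors then collapses the $\varphi_0$ term of $x_a(t)$ onto $V_C\exp(-tB_{1,1}^{-1})V_C^\top x(0)$, and analogously the $\varphi_1,\varphi_2$ terms onto the matching contributions of (\ref{solx3}); summing gives $x_a(t)=x_\cR(t)$.

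I expect the main obstacle to be the rigorous justification of the functional-calculus transfer $\psi(S_{1,1})Y_m=Y_m\psi(H_m)$ for the \emph{non-polynomial} symbols $f,f_1,f_2$: one must confirm that $\sigma(H_m)$ lies strictly inside the analyticity region, i.e. stays off $\lambda=1$ (where $g_1$ blows up, corresponding to $\mu=\infty$) as well as off $\lambda=0$ (invertibility), so that a Cauchy-integral definition of $\psi(H_m)$ is valid and commutes with the intertwiner $Y_m$. A secondary but delicate bookkeeping point is the source terms: the $\varphi_1,\varphi_2$ blocks of (\ref{solx3}) carry $V_C^\top G^{-1}u$, so the hypothesis ``lies in the range of $W_m$'' must be applied to the correctly transformed source vectors in order that the projection $W_m^\top C$ reproduce $V_C^\top G^{-1}u(0)$ and $V_C^\top G^{-1}u'(0)$; this is precisely where the identity $B_{1,1}^{-1}=C_1^{-1}(V_C^\top G^{-1}V_C)^{-1}$ from Prop.~\ref{prop1.2} is used to reconcile the two representations.
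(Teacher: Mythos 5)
Your proposal is correct and follows essentially the same route as the paper: decompose $x_\cR(t)$ term by term via (\ref{solx3}), use $B_{1,1}=g_1(S_{1,1})$ from Prop.~\ref{prop1.8}, exploit the exactness of the Arnoldi relation when $h_{m+1,m}=0$ to transfer $f,f_1,f_2$ from $S_{1,1}$ to $H_m$, and use the $C$-orthogonality together with the range hypothesis to collapse $W_mW_m^\top C$ on the data. The only difference is presentational — you phrase the key step as the intertwining $\psi(S_{1,1})Y_m=Y_m\psi(H_m)$ on an invariant subspace, where the paper writes the equivalent identity $(S_{1,1})^k=V_C^\top W_mH_m^kW_m^\top CV_C$ for powers and extends it implicitly — and your closing remark about matching the transformed source vectors $V_C^\top G^{-1}u$ correctly flags the one bookkeeping point the paper glosses over.
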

  \begin{proof}
  Write $x_\cR (t)$   in (\ref{solx3}) as follows, 
 \[  x_\cR (t):=z_1(t)+z_2(t)+z_3(t).\]
     The first term in  (\ref{solx3}) gives
\begin{eqnarray}
&&z_1(t)= V_C \exp(-t B_{1,1}^{-1}) V_C ^\top x(0)\label{eq20}
\\
&=& V_C \exp(-t  \{ g^{-1}(S_{1,1})\}^{-1}) V_C ^\top x(0)=V_C f(S_{1,1})V_C ^\top x(0).\label{eq_21}
\end{eqnarray}
The approximation of (\ref{eq_21}) is computed  as follows.
From 
\[
 S_{1,1}\approx V_C ^\top  W_m H_m W_m^\top C V_C,\]
 and $V_C V_C^\top  W_m=W_m$, 
we have  \beqq\label{eq40}
(S_{1,1})^k \approx V_C ^\top  W_m H_m^k W_m^\top C V_C .\eeqq
Since columns of $W_m$ lie in $V_C$, then with $C$-orthogonality, 
 (\ref{eq_21}) yields
\begin{eqnarray}\label{eq45}
z_1(t)\approx 
 W_m f(  H_m) W_m^\top  CV_C V_C^\top   x(0)\approx W_m f(  H_m) W_m^\top  C  x(0).
\label{eq24}\end{eqnarray}
\footnote{In the case of $h_{m+1,m}=0$, the equalities in (\ref{eq40})
hold and thus 
 the equalities in (\ref{eq45}) hold.} 
For the remaining terms $z_2(t),z_3(t)$ of  (\ref{solx3}), we have \[
 V_C \varphi_0(-t B_{1,1}^{-1}) V_C ^\top \widetilde u(0)=V_C \exp(-t B_{1,1}^{-1}) V_C ^\top \widetilde u(0) \approx  W_m f(  H_m) W_m^\top C \widetilde u(0). 
 \]
Likewise, since $(B_{1,1})^{-1}=(g^{-1}(S_{1,1}))^{-1}=g_1(S_{1,1})$, then 
\[
 V_C  B_{1,1}^{-1} \varphi_k(-t B_{1,1}^{-1}) V_C ^\top \widetilde u'(0)= V_C  g_1(S_{1,1})^{-1} \varphi_k(-t g_1(S_{1,1})^{-1}) V_C ^\top \widetilde u'(0) \approx  W_m f_k(  H_m) W_m^\top C \widetilde u'(0). 
 \]
In summary,   we have (\ref{x_apr}) and (\ref{x_apr_1}) by Prop.~\ref{prop1.6}. 
\end{proof}
\begin{rem}[Complete solutions $x(t)$]With (\ref{x_apr}),  we can compute the complete solution $x_\cR(t)+x_\cN(t)$. From (\ref{sys}), 
\beqq \label{x_complete}x(t)=x_\cR(t)+x_\cN(t)=G^{-1}u(t)-G^{-1}C \frac{dx_\cR(t)}{dt},\eeqq
where 
\begin{eqnarray}
 && \frac{dx_\cR(t)}{dt}=
 W_m  \{ 
 -g_1(H_m)^{-1} \exp(-t g_1(H_m)^{-1}) W_m^\top  C  x(0)
\\&& + 
 g_1(H_m)^{-1} \exp(-t g_1(H_m)^{-1}) W_m^\top  C G^{-1}u(0)
 +
 (I- \exp(-t g_1(H_m)^{-1}))
W_m^\top C G^{-1}u'(0) \}\\
&=&W_m  \{ 
 g_1(H_m)^{-1} \exp(-t g_1(H_m)^{-1}) W_m^\top  C  (-x(0)+G^{-1}u(0))
\\&& 
 +
 (I- \exp(-t g_1(H_m)^{-1}))
W_m^\top C G^{-1} u'(0) \}.
\end{eqnarray}
\end{rem}

\begin{rem}  How to choose the initial vectors for the Arnoldi iterations?
Suppose  $x(0)$, $u(0)$ and $u'(0)$ lying in $\cR$. Then it is typical  to choose them as  the initial vector of the corresponding  Arnoldi iterations with a proper $C$-normalization, i.e., the first column of $\widetilde W_m$ is the normalized   vector $w/\langle w, Cw\rangle^{1/2}$. 
Note that when $(\widetilde W_m^{(0)}, H_m^{(0)})$ is generated from the $C$-orthogonal Arnoldi iterations with the initial vector $x_0$ in $\cR$, 
 the first term of $x_a(t)$ in (\ref{x_apr}) becomes
$  \beta_0 P_C \widetilde W_m^{(0)}  f(  H_m ^{(0)}) e_1 $, where $ \beta_0=\|x_0\|_C$.
Empirically, one can collect all the exponential terms as one  matrix-exponential-and-vector product  (either $\varphi_0$, $\varphi_1$ or $\varphi_2$) and construct only one pair of $(W, H)$ to conduct the computation, as considered in~\cite{AWESOME}    
%
%
\end{rem}

\section{Error analysis} 

\subsection{$C$-numerical range}
  The numerical range (or called field of values)~\cite{
Charles, Crou07, Bern09}, which is the range of Rayleigh quotient,  
 is one fundamental quantity in the error analysis of matrix exponential computation. To establish the convergence,  for a square matrix $A\in \IC^{N\times N } $ of the form $A=KC$ with some matrix $K\in \IR^{N\times N}$,  we introduce   the $C$-numerical range 
 \beqq\label{FC}
 \cF_C (A)=\{x^* CA x: x\in \IC^N, \|x\|_C:=\sqrt{x^*Cx}\le 1 \},
 \eeqq
which is one generalization of the standard  numerical range 
 \[
 \cF(A)=\{x^* A x: x\in \IC^N, \|x\|\le 1\}.
 \]
 Here $A$ could be the  matrix $B$ in (\ref{Bdef}) or $S$ in (\ref{S_def}). 
Clearly, the set $\cF_C(A)$ in (\ref{FC}) only depending on those vectors $x$ in the range $C$.

  \begin{definition} The set of a disk with center $c_1\in \IC$ and radius $\rho_1>0$ is denoted by $\cD(c_1, \rho_1)\subset \IC$.
 \end{definition}

 Due to possible non-symmetric structure in $G$, numerical range $\cF_C (B)$ is not a line-segment on the real axis. 
The smallest  disk  covering  $\cF_C(B)$ is introduced to
quantize the spectrum of $B=G^{-1} C$. 
For $G,C$ in (\ref{assGC}), let $C=V_C C_1V_C^\top$ be the eigenvector decomposition. 
 Note that eigenvalues of $B_{1,1}$ all lying  in the right half plane from Prop.~\ref{prop1.2} does not implies that $\cF(B_{1,1}) $ lies in the right half plane. As an alternate,  the  $C$-numerical range  $\cF_C(B)$  always lies in the right half plane.

\begin{prop}  
Let $A$ be in the form $A=K C$ for some matrix $K\in \IR^{N\times N}$. Then  both $\cF(A)$ and $\cF_C( A)$ contain
all nonzero eigenvalues of $A$. In addition, if $K$ is positive semi-definite, then $\cF_C(A)$ lies in the right half plane.
\end{prop}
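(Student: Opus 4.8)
The plan is to separate the two assertions and, within the eigenvalue assertion, to treat $\cF(A)$ and $\cF_C(A)$ by the same normalization argument, the only extra subtlety for $\cF_C$ being that the relevant eigenvector must first be shown to have positive $C$-norm. Throughout I use the standing hypothesis (\ref{assGC}) that $C$ is real, symmetric, and positive semidefinite, which is what makes $\|\cdot\|_C$ a genuine seminorm and is needed for the argument below.

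For the ordinary numerical range the argument is immediate. If $\lambda$ is an eigenvalue of $A$ with eigenvector $v$, I normalize to $\|v\|=1$ and obtain $v^* A v = \lambda$, so $\lambda \in \cF(A)$; since $\|v\|=1\le 1$ this is direct from the definition (indeed every eigenvalue, zero or not, lies in $\cF(A)$).

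For $\cF_C(A)$ I would take a \emph{nonzero} eigenvalue $\lambda$ with eigenvector $v\neq 0$ and first show that $Cv\neq 0$. This is where the factorization $A=KC$ is used: from $KCv=\lambda v$ and $\lambda\neq 0$ one gets $v=\lambda^{-1}K(Cv)$, so $Cv=0$ would force $v=0$, contradicting that $v$ is an eigenvector. Because $C$ is symmetric positive semidefinite, $Cv\neq 0$ yields $v^* C v>0$, i.e. $\|v\|_C>0$, so I may set $x=v/\|v\|_C$. Then $\|x\|_C=1$ and, using $CAv=\lambda Cv$, I get $x^* C A x=\lambda (v^* C v)/\|v\|_C^2=\lambda$, so $\lambda\in\cF_C(A)$. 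I expect this to be the main (though short) obstacle: it is the only place where the hypothesis ``nonzero'' is genuinely needed, and the only place where $A=KC$ together with positive semidefiniteness of $C$ enters; everything else is bookkeeping.

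Finally, for the right-half-plane claim I would compute directly. Because $C$ is real symmetric, $x^* C=(Cx)^*$ for every $x\in\IC^N$, so writing $y:=Cx$ gives $x^* C A x=x^* C K C x=y^* K y$. Decomposing $y=a+ib$ with $a,b$ real yields $\Re(y^* K y)=a^\top K a+b^\top K b$, which is nonnegative as soon as $K$ is positive semidefinite (i.e. $w^\top K w\ge 0$ for all real $w$). Hence $\Re(x^* C A x)\ge 0$ for every admissible $x$, so $\cF_C(A)$ lies in the closed right half plane. This part is a routine quadratic-form computation once the substitution $y=Cx$ has been made.
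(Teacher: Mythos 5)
Your proof is correct and follows essentially the same route as the paper's: evaluate the Rayleigh quotients at a (suitably normalized) eigenvector for the first claim, and reduce $\Re(x^*CKCx)$ to a real quadratic form in $y=Cx$ for the second. The only difference is that you explicitly justify $\|v\|_C>0$ for an eigenvector of a nonzero eigenvalue (via $v=\lambda^{-1}K(Cv)$ and the symmetric positive semidefiniteness of $C$), a point the paper's proof uses implicitly when it divides by $x^*Cx$; this is a worthwhile addition rather than a deviation.
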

\begin{proof}
Let $x$ be an nonzero eigenvector of $A$ corresponding to  nonzero eigenvalue $\lambda$. 
Then $Ax=\lambda x$ and the first statement is given by 
\[
\lambda=\frac{x^* Ax }{x^* x}=\frac{x^* C Ax }{x^* C x}.
\]
In addition, if $K+K^\top\succeq 0$, then
 \[
 \frac{x^* C Ax }{x^* C x}= \frac{x^* C KC x }{x^* C x}=
 \frac{x^* C (K+K^\top) Cx }{2 x^* C x} \; \forall x
 \]
 have a nonnegative real component. 

\end{proof}

 Here are a few properties of  $\cF_C(B)$ if $G$ is positive definite.
 
   \begin{prop}\label{FCB} Suppose that (\ref{assGC}) holds for $G,C$. Let $H=G^{-1}$. In addition, $(H+H^\top)/2$ is positive definite with eigenvalues in $[\xi_1, \xi_2]$ with $\xi_1>0$, $(H-H^\top)/2$ has eigenvalues in $[-i\xi_3, i\xi_3]$,  
 and $C$ is positive semi-definite with eigenvalues in $\{0\}\cup [\xi_4, \xi_5]$, $\xi_4>0$. Then $\cF_C(B)$ lies in $\cD(c_1, \rho_1)$ with $c_1>\rho_1$. Here $c_1,\rho_1$ only depend on these parameters $\xi_1,\xi_2,\xi_3,\xi_4,\xi_5$ of $C,G$.
 \end{prop}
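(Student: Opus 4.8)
The plan is to reduce $\cF_C(B)$ to an ordinary numerical range carried by the range of $C$, bound its real and imaginary parts separately through the three given spectral intervals, and then circumscribe the resulting rectangle by a disk that stays strictly in the right half-plane. Throughout I read $\cF_C(B)$ as the set of generalized Rayleigh quotients $x^*CBx/\|x\|_C^2$ (equivalently, the $\|x\|_C=1$ part of (\ref{FC})), since only these quotients are relevant to the convergence analysis.

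First I would record the relevant form of a point of $\cF_C(B)$. For $x$ with $\|x\|_C=1$ set $p:=Cx$; since $B=HC$ with $H=G^{-1}$ and $C$ symmetric, one has $x^*CBx=x^*CHCx=p^*Hp$, while $\|x\|_C^2=x^*Cx$. Splitting $H$ into its symmetric and skew parts, $H=S+K$ with $S=(H+H^\top)/2$ and $K=(H-H^\top)/2$, gives $\Re(x^*CBx)=p^*Sp$ and $\Im(x^*CBx)=p^*(-iK)p$, the latter being real because $-iK$ is Hermitian with spectrum in $[-\xi_3,\xi_3]$. Because the null space of $C$ contributes nothing to $p=Cx$, it suffices to range over $x$ in the range of $C$, on which $C$ has eigenvalues in $[\xi_4,\xi_5]$; this lets me pass cleanly between $p^*p=x^*C^2x$ and $x^*Cx$.

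Next I would produce the two coordinate bounds. Using $S\succeq\xi_1 I$, $S\preceq\xi_2 I$ together with the operator inequalities $\xi_4\,C\preceq C^2\preceq\xi_5\,C$ valid on the range of $C$, the real part satisfies $\xi_1\xi_4\le\Re(x^*CBx)/\|x\|_C^2\le\xi_2\xi_5$; in particular it is bounded below by $\xi_1\xi_4>0$. Since $-iK$ has spectrum in $[-\xi_3,\xi_3]$, the same manipulation gives $|\Im(x^*CBx)|/\|x\|_C^2\le\xi_3\xi_5$. Hence every point of $\cF_C(B)$ lies in the rectangle $[\xi_1\xi_4,\xi_2\xi_5]\times[-\xi_3\xi_5,\xi_3\xi_5]\subset\IC$, which sits in the open right half-plane at distance $\xi_1\xi_4$ from the imaginary axis and whose corners depend only on $\xi_1,\dots,\xi_5$.

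Finally I would enclose this rectangle in a disk $\cD(c_1,\rho_1)$ centered on the positive real axis. The natural choice $c_1=(\xi_1\xi_4+\xi_2\xi_5)/2$, $\rho_1=\sqrt{((\xi_2\xi_5-\xi_1\xi_4)/2)^2+(\xi_3\xi_5)^2}$ already gives $c_1>\rho_1$ whenever the symmetric part dominates, namely $\xi_1\xi_2\xi_4>\xi_3^2\xi_5$. The step I expect to be the real obstacle is guaranteeing $c_1>\rho_1$ in the strongly non-symmetric regime: there I would keep the center real but push it to the right, so that the radius needed to cover the rectangle is $\rho_1=\sqrt{(c_1-\xi_1\xi_4)^2+(\xi_3\xi_5)^2}$, and check that $c_1-\rho_1\to\xi_1\xi_4>0$ as $c_1\to\infty$; thus a finite $c_1$ depending only on the $\xi$'s (e.g.\ any $c_1>\xi_2\xi_5+(\xi_3\xi_5)^2/(2\xi_1\xi_4)$) places the disk strictly in the right half-plane. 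The genuinely delicate point is exactly this: $c_1>\rho_1$ hinges on the rectangle being both bounded (from finiteness of $\xi_2,\xi_3,\xi_5$) and bounded away from the imaginary axis (from $\xi_1\xi_4>0$), and it is the positive-definiteness of the symmetric part of $G^{-1}$ that supplies the latter.
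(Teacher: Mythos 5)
Your proposal is correct and follows essentially the same route as the paper: both split $G^{-1}$ into its symmetric and skew parts to trap $\cF_C(B)$ in the rectangle $[\xi_1\xi_4,\xi_2\xi_5]\times[-\xi_3\xi_5,\xi_3\xi_5]$, and then circumscribe that rectangle by a disk $\cD(c_1,\rho_1)$ whose center is pushed far enough to the right that $c_1>\rho_1$ (the paper's condition $c_1\ge(2\xi_1\xi_4)^{-1}(\xi_1^2\xi_4^2+\xi_3^2\xi_5^2)$ is exactly your computation). Your version merely makes explicit the operator inequalities $\xi_4 C\preceq C^2\preceq\xi_5 C$ on the range of $C$ that the paper leaves implicit.
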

 \begin{proof}
 Note that    the $C$-numerical range of $B$ can be expressed  by 
\beqq\label{FB}
\cF_C(B)=\{ \frac{x^* C G^{-1} C x}{x^* C x}: x\in \IC^N, Cx\neq 0 \}=\{ z^* D_C^{1/2}V_C^\top G^{-1} V_C  D_C^{1/2} z: \|z\|\le 1, z\in \IC^n\}.
\eeqq
From $G^{-1}=(H+H^\top)/2+(H-H^\top)/2$, then $\cF_C(B)$ lies within  a box region in the right half plane,
\[ 0<\xi_1\xi_4 \le \Re(\cF_C(B))\le \xi_2 \xi_5,\;  -\xi_3\xi_5 \le \Im(\cF_C(B))\le \xi_3 \xi_5, 
\] 
where  equalities can hold only if $z$ is a pure real vector or a pure imaginary vector.  
Thus, we can find  some $c_1>0, \rho_1>0$ with $c_1-\rho_1>0$ , 
such that  $\cF_C(B)\subset \cD(c_1, \rho_1)$. 
Choose \[\rho_1:=\sqrt{
(\max( c_1-\xi_1\xi_4, \xi_2\xi_5 -c_1))^2+(\xi_3\xi_5)^2
}.
\]
Note that $c_1^2\ge \rho_1^2$ holds if and only if 
\[
c_1\ge \max\{ (2\xi_1\xi_4)^{-1} \{\xi_1^2\xi_4^2+\xi_3^2\xi_5^2\}, (2\xi_2\xi_5)^{-1} \{\xi_2^2\xi_5^2+\xi_3^2\xi_5^2\}\}
\]
Hence, with a sufficient large value $c_1$, the disk $\cD(c_1,\rho_1)$ containing $\cF_C(B)$ lies in the right half plane. 
 \end{proof}

In general $B$ is not normal.  The following proposition and remark   exhibit   the dependence of $\cF_C(S)$ and $\cF(H_m)$ on $\cF_C(B)$. As long as $\cF_C(B)$ lies in the right half plane, $\cF(H_m)$ does as well. 
The following  function $g$ 
which is one M{\"o}bius transformation maps generalized circles to generalized circles, which actually lie within $\cD(1/2,1/2)$.

\begin{prop}\label{Bound1}Let $\gamma>0$ and $\lambda=g(\mu)=(1+\gamma\mu^{-1})^{-1}$, which maps $\mu \in \cF_C(B)$  to $\lambda\in \cF_C(S)$ by (\ref{BS}). 
Suppose (\ref{assGC}). Then Prop.~\ref{FCB} indicates that  $\cF_C(B)$ lies in the right half plane,
 \beqq \label{ass1}\cF_C(B)\subset \cD(c_1,\rho_1) \textrm{ with some } c_1, \rho_1 \in \IR. 
 \eeqq 
 Let $\mu_1: =c_1-\rho_1>0, \textrm{ and } \mu_2:=c_1+\rho_1$. 
Then $\cF_C(S)\subset \cD(c_0, \rho_0)$, where $c_0=(g(\mu_1)+g(\mu_2))/2$, $\rho_0=(g(\mu_2)-g(\mu_1))/2$.
Note that since $\mu_1\ge 0, \mu_2\ge 0$, then $g(\mu_2)\le 1$ and $g(\mu_1)\ge 0$. Thus, $\cF(S)\subset \cD(1/2,1/2)$.
\end{prop}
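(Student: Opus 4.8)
The plan is to reduce the $C$-numerical range to an ordinary numerical range and then exploit the \emph{resolvent} form of $g$ rather than a naive ``image of a disk'' argument. By (\ref{FB}) one has $\cF_C(B)=\cF(M)$ for the congruent matrix $M:=C_1^{1/2}V_C^\top G^{-1}V_C C_1^{1/2}$, which is similar to $B_{1,1}$; the same reduction applied to $S=P_C(C+\gamma G)^{-1}C$ (using $CP_C=C$) gives $\cF_C(S)=\cF(N)$ with $N:=C_1^{1/2}V_C^\top(C+\gamma G)^{-1}V_C C_1^{1/2}$. Since $N$ and $S_{1,1}$ are similar via $C_1^{1/2}$, Prop.~\ref{prop1.8} yields $N=g(M)$, where $g(\mu)=\mu/(\mu+\gamma)=1-\gamma(\mu+\gamma)^{-1}$. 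By Prop.~\ref{FCB}, $\cF(M)=\cF_C(B)\subset\cD(c_1,\rho_1)$ lies in the right half plane, so $\Re(\mu)\ge 0$ for every $\mu\in\cF(M)$. The goal becomes $\cF(g(M))\subset\cD(1/2,1/2)$.

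The geometric ingredient is that $g$ is a M\"obius transformation with real coefficients and pole at $-\gamma<0$, hence holomorphic and injective on the closed right half plane containing $\cD(c_1,\rho_1)$. Because it maps generalized circles to generalized circles and $\cD(c_1,\rho_1)$ has \emph{real} center, $g$ sends it to a disk symmetric about the real axis whose real diameter has endpoints $g(\mu_1),g(\mu_2)$ (the order is preserved since $g'(\mu)=\gamma(\mu+\gamma)^{-2}>0$); this identifies $c_0,\rho_0$. Evaluating $g$ on the imaginary axis gives $\Re(g(is))=s^2/(s^2+\gamma^2)=|g(is)|^2$, i.e. $g$ maps the imaginary axis onto $\partial\cD(1/2,1/2)$ and the whole right half plane onto $\cD(1/2,1/2)$; in particular $\cD(c_0,\rho_0)\subset\cD(1/2,1/2)$ and $g(\mu_1)\ge 0$, $g(\mu_2)\le 1$.

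The crux --- and the step I expect to be the main obstacle --- is that one \emph{cannot} simply write $\cF(g(M))=g(\cF(M))$, since $B$ (hence $M$) is in general non-normal. I would instead argue at the operator level using $g(M)=I-\gamma(M+\gamma I)^{-1}$. For a unit vector $z$ set $w:=(M+\gamma I)^{-1}z$, $\delta:=\|w\|^2$, and $\mu:=w^*Mw/\delta\in\cF(M)$. A short computation gives
\[
z^*g(M)z=1-\gamma(\overline{\mu}+\gamma)\delta,
\]
which, as $\delta$ increases from $0$ to its Cauchy--Schwarz bound, traces the segment from $1$ to $g(\mu)$. Indeed $\|z\|^2=\|(M+\gamma I)w\|^2=\|Mw\|^2+2\gamma\Re(\mu)\delta+\gamma^2\delta$ together with $\|Mw\|^2\ge|\mu|^2\delta$ yields the key inequality $\delta\,|\mu+\gamma|^2\le 1$, and at the extremal value $\delta=|\mu+\gamma|^{-2}$ the expression equals exactly $g(\mu)$. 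Hence
\[
\cF(g(M))\subset\mathrm{conv}\!\big(g(\cF(M))\cup\{1\}\big)\subset\mathrm{conv}\!\big(\cD(c_0,\rho_0)\cup\{1\}\big).
\]

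To finish, I would note that $\cD(1/2,1/2)$ is convex and, by the second paragraph, contains both the disk $\cD(c_0,\rho_0)$ and the point $1=g(\infty)$; therefore it contains their convex hull, giving $\cF_C(S)=\cF(g(M))\subset\cD(1/2,1/2)$. This also explains the role of $\cD(c_0,\rho_0)$ in the statement: that disk is precisely $g(\cD(c_1,\rho_1))$ and captures the ``curved'' part of $\cF_C(S)$, while the non-normality of $B$ lets the numerical range bulge toward $1$, so the honest containing region is the cone $\mathrm{conv}(\cD(c_0,\rho_0)\cup\{1\})$ --- still inside $\cD(1/2,1/2)$. Finally, since $H_m=W_m^\top C(C+\gamma G)^{-1}CW_m$ with $W_m^\top CW_m=I$, every Rayleigh quotient of $H_m$ is a $C$-Rayleigh quotient of $S$, so $\cF(H_m)\subset\cF_C(S)\subset\cD(1/2,1/2)$ as well.
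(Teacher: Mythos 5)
Your route is genuinely different from the paper's. The paper normalizes $T=(B-c_1I)/\rho_1$ and invokes the Berger--Stampfli mapping theorem for the composed M\"obius map $f(z)=(g(\rho_1 z+c_1)-c_0)/\rho_0$ to conclude $\cF_C(S)=\cF_C(g(B))\subset\cD(c_0,\rho_0)$. You instead reduce the $C$-numerical ranges to ordinary ones by congruence with $C_1^{1/2}$ and run a bare-hands resolvent computation: $z^*g(M)z=1-\gamma(\overline{\mu}+\gamma)\delta$ with $\delta\,|\mu+\gamma|^2\le 1$ from Cauchy--Schwarz. That computation is correct, is self-contained (no mapping theorem), and yields $\cF_C(S)\subset\mathrm{conv}\bigl(g(\cF_C(B))\cup\{1\}\bigr)\subset\cD(1/2,1/2)$, which is enough for the final claim of the proposition and for the containment $\cF(H_m)\subset\cD(1/2,1/2)$ that the simulations section and the residual analysis actually use.

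Be aware, though, that you have proved strictly less than the proposition asserts: your argument does not give $\cF_C(S)\subset\cD(c_0,\rho_0)$, only the convex hull with $\{1\}$. Your instinct that one cannot simply push the disk through $g$ is justified in a strong sense: the stated containment can fail. The paper's appeal to Berger--Stampfli is not licensed here, because that theorem requires $f(0)=0$, whereas $f(0)=(g(c_1)-c_0)/\rho_0\neq 0$ since $g$ is not affine. Concretely, take $C=I$, $G^{-1}=M=\bigl(\begin{smallmatrix}1&1.8\\0&1\end{smallmatrix}\bigr)$ (so $G+G^\top\succ 0$) and $\gamma=1$: then $\cF_C(B)=\cD(1,0.9)$, so $\mu_1=0.1$, $\mu_2=1.9$ and $c_0+\rho_0=g(\mu_2)=1.9/2.9\approx 0.655$, while $S=g(M)=\bigl(\begin{smallmatrix}0.5&0.45\\0&0.5\end{smallmatrix}\bigr)$ has $\cF(S)=\cD(0.5,0.225)$ with rightmost point $0.725>0.655$. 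So the honest containing region is exactly your cone $\mathrm{conv}(\cD(c_0,\rho_0)\cup\{1\})$ (still inside $\cD(1/2,1/2)$), not $\cD(c_0,\rho_0)$. This is worth flagging because Theorem~\ref{thm4} later uses $\cF_C(S)\subset\cD(c_0,\rho_0)$ together with the power inequality for $(S-c_0I)/\rho_0$; with your bound, one must replace $\cD(c_0,\rho_0)$ there by a slightly larger disk containing the convex hull (which still excludes $0$ since $g(\mu_1)>0$), at the cost of a worse ratio $\rho_0/r$ but with the same qualitative geometric convergence.
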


\begin{proof} 
Consider the mapping theorem by Berger-Stampfli(1967)\cite{Berger}. 
Let \beqq\label{TB}
T=(B-c_1I)/\rho_1,
\eeqq i.e., $B=c_1 I+\rho_1 T$ where $c_1=(\mu_1+\mu_2)/2$, $\rho_1=(\mu_2-\mu_1)/2$. 
Then by  (\ref{ass1}),  $|\cF_C (T)|\le 1$.

Choose one analytic function $f$ on $z\in \cD(0,1)\to \cD(0,1)$,   \[
f(z)=\frac{g(\rho_1 z+c_1)-c_0}{\rho_0}. 
\]
Since $g$ is a function mapping a circle with centre at the real axis to another  circle with  center at the real axis,  by definition of $c_0, c_1, \rho_1$,  $|f(z)|\le 1$ for all $|z|\le 1$.
Clearly, $f(z)$ is analytic in $|z|<1$ and continuous on the boundary.
By the theorem in~\cite{Berger}, $\cF_C(f(B))$ also lies in $\cD(0,1)$. Thus with (\ref{TB}),
\[\cF_C(S)=\cF_C(g(B))=
c_0+\rho_0\cF_C \left(\frac{g(\rho_1 T+c_1I)-c_0I }{\rho_0}\right) 
\]
lies in the disk $\cD(c_0, \rho_0)$, i.e., with center $c_0$ and radius $\rho_0$.

\end{proof}
\begin{rem} 
The passivity property of the system indicates $\cF(H_m)$ in $\cD(c_0,\rho_0)$. Indeed, 
from (\ref{H_def1}) and $W_m^\top CW_m=I$,  the numerical range of $H_m$ lies inside the $C$-numerical range, \beqq\label{HS}
\cF(H_m)\subset \cF_C(S)\eeqq
 according to the definition of $\cF$ and $\cF_C$.
\end{rem}

To establish the convergence, we need the following results. 
The coming  result relates  the spectral norm to  the radius of its  numerical range. 

\begin{prop}\label{bound2}
Let $A\in \IR^{N\times N}$ in the form of $KC$ with $K\in \IR^{N\times N}$ and $\cF_C(A)$ lie in $\cD(0, \rho)$. Then \[ 
\|A\|_C:=\sup_v\{ \|A v\|_C/\|v\|_C\}\le 2\rho.\]
\end{prop}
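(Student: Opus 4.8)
The plan is to reduce this $C$-semidefinite statement to the classical inequality between the spectral norm and the numerical radius of an ordinary matrix, namely $\|M\|_2\le 2\,w(M)$ with $w(M):=\sup_{\|z\|\le1}|z^*Mz|$. The mechanism is the change of variables already exploited in (\ref{FB}): writing $C=V_C C_1 V_C^\top$ with $C_1$ diagonal and positive definite, I associate to each $x$ the vector $z:=C_1^{1/2}V_C^\top x\in\IC^n$, for which $\|x\|_C=\|C_1^{1/2}V_C^\top x\|=\|z\|$, and as $x$ runs over $\IC^N$ the vector $z$ runs over all of $\IC^n$ (because $V_C^\top$ is onto and $C_1^{1/2}$ is invertible).

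First I would substitute $A=KC$ and define the reduced matrix
\[
M:=C_1^{1/2}V_C^\top K V_C C_1^{1/2}\in\IC^{n\times n}.
\]
Using $C^\top=C$ we have $Cx=V_C C_1^{1/2}z$, so $x^*CAx=x^*CKCx=(Cx)^*K(Cx)=z^*Mz$. Since $\|x\|_C=\|z\|$ and $z$ is arbitrary, this gives $\cF_C(A)=\cF(M)$, so the hypothesis $\cF_C(A)\subset\cD(0,\rho)$ is exactly $w(M)\le\rho$.

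Next I would identify the two operator norms. Applying the identity $\|u\|_C^2=\|C_1^{1/2}V_C^\top u\|^2$ to $u=Av=KCv=K V_C C_1^{1/2}z$ yields $C_1^{1/2}V_C^\top(Av)=Mz$, hence $\|Av\|_C=\|Mz\|$ while $\|v\|_C=\|z\|$. Taking the supremum over $z\ne0$ then gives $\|A\|_C=\|M\|_2$, so the proposition collapses to the implication $w(M)\le\rho\Rightarrow\|M\|_2\le2\rho$.

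Finally I would prove this classical bound by the Hermitian/skew-Hermitian splitting $M=P+S$ with $P=(M+M^*)/2$ and $S=(M-M^*)/2$. Since $P$ is Hermitian, $\|P\|_2=w(P)$, and from $|z^*Pz|=|\Re(z^*Mz)|\le w(M)$ I get $\|P\|_2\le w(M)$; since $iS$ is Hermitian with $z^*(iS)z=-\Im(z^*Mz)$, the same argument gives $\|S\|_2=\|iS\|_2\le w(M)$. The triangle inequality then yields $\|M\|_2\le\|P\|_2+\|S\|_2\le2w(M)\le2\rho$, which is the claim. I expect the only delicate point to be the bookkeeping around the singularity of $C$: one must exclude the $0/0$ ratios arising from $v\in\ker C$ (harmless here, since $Av=KCv=0$ there) and verify that $z=C_1^{1/2}V_C^\top x$ is a genuine bijection onto $\IC^n$; once that is secured the remainder is entirely standard and the constant $2$ is precisely the classical one.
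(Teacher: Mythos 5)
Your proof is correct and follows essentially the same route as the paper's: the paper also reduces the claim to the Hermitian/skew-Hermitian splitting (it writes $K=\tfrac12(K+K^\top)+\tfrac12(K-K^\top)$, bounds the $C$-norm of each piece by the real, resp.\ imaginary, part of the $C$-numerical range via conjugation by $C^{1/2}$, and adds the two bounds), which is exactly your $M=P+S$ argument after your change of variables $z=C_1^{1/2}V_C^\top x$. Your version is a cleaner packaging of the same idea --- reducing to the classical inequality $\|M\|_2\le 2w(M)$ for an explicit $n\times n$ compression --- and your remarks about $\ker C$ and the surjectivity of $x\mapsto z$ tidy up points the paper leaves implicit.
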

\begin{proof} Since $C$ is unitary diagonalizable,  $C=V_C C_1V_C^\top $, the matrix square root   $C^{1/2}$ is given by  $C^{1/2}=V_C D_C^{1/2} V_C^\top $. 
For any $v\in \IR^N$ with $\|v\|_C=1$, we have \begin{eqnarray}
&&\frac{\|(K + K^\top )Cx \|_C}{\|x\|_C}=\frac{\| C^{1/2}(K+K^\top)C^{1/2}C^{1/2} x \|}{\|C^{1/2} x\|}=
\| C^{1/2}(K+K^\top)C^{1/2} \|\\
&=&\max_x \Re(x^*(CK C+C K^\top C) x)=2\max_x  \Re(x^* C K C x)\le 2\rho.\end{eqnarray}
Likewise, 
$\|v\|_C^{-1}\|(K-K^\top )Cv \|_C=
\|C^{1/2}(K-K^\top )C^{1/2} \|
=\max_x \Im(x^*(CK C-CK^\top C) x)\le 2\rho.$
The sum of the above two inequalities gives
  $
  \|K Cv \|_C/\|v\|_C
  \le 2\rho.$
\end{proof}

The following inequality induces   numerical range $\cF_C(A)$ in  estimating error bounds for (\ref{eq69}). 
\begin{prop}\label{bound3}
Let $\Gamma$ be a set in $\IC$ and $d(\Gamma, \cF_C(A))$ be the shortest distance between $\Gamma$ and $\cF_C(A)$. Then 
\[\min_{\lambda\in \Gamma} \|(\lambda I-A)^{-1}\|_C \le d(\Gamma, \cF_C(A))^{-1}.\]
\end{prop}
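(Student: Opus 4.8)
The plan is to reduce the statement to the pointwise resolvent bound
\[
\|(\lambda I-A)^{-1}\|_C \le \frac{1}{d(\lambda,\cF_C(A))}
\]
for each fixed $\lambda$, which is the $C$-seminorm analogue of the classical inequality bounding the resolvent norm by the reciprocal distance to the numerical range. Once this pointwise bound is established, the proposition follows at once: since $d(\Gamma,\cF_C(A))=\inf_{\lambda\in\Gamma} d(\lambda,\cF_C(A))\le d(\lambda,\cF_C(A))$ for every $\lambda\in\Gamma$, monotonicity of $t\mapsto 1/t$ yields $\|(\lambda I-A)^{-1}\|_C\le 1/d(\Gamma,\cF_C(A))$ for \emph{every} $\lambda$, so in particular the minimum over $\lambda\in\Gamma$ respects this bound.

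The heart of the argument is a lower bound on the forward operator: for every $w\in\IC^N$,
\[
\|(\lambda I-A)w\|_C \ge d(\lambda,\cF_C(A))\,\|w\|_C.
\]
By homogeneity it suffices to take $\|w\|_C=1$. Applying the Cauchy--Schwarz inequality to the positive semidefinite Hermitian form $\langle x,y\rangle_C=x^* C y$, I would write
\[
|\lambda - w^* C A w| = |w^* C(\lambda I-A)w| = |\langle w,(\lambda I-A)w\rangle_C| \le \|w\|_C\,\|(\lambda I-A)w\|_C = \|(\lambda I-A)w\|_C.
\]
Because $\|w\|_C=1$, the scalar $w^* C A w$ lies in $\cF_C(A)$ by its defining relation (\ref{FC}), so $|\lambda-w^* C A w|\ge d(\lambda,\cF_C(A))$, which is exactly the claimed lower bound. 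Setting $v=(\lambda I-A)w$ and taking the supremum over $w$ with $\|w\|_C\ne 0$ inverts this inequality into the resolvent bound, since $\|(\lambda I-A)^{-1}v\|_C/\|v\|_C=\|w\|_C/\|(\lambda I-A)w\|_C\le 1/d(\lambda,\cF_C(A))$.

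The main obstacle is that $C$ is only positive semidefinite, so $\|\cdot\|_C$ is a \emph{seminorm} and vectors in $\ker C$ are invisible to it. Two points then require care. First, Cauchy--Schwarz remains valid for a positive semidefinite form, so the central inequality is unaffected. Second, and more delicate, is the well-posedness of $\|(\lambda I-A)^{-1}\|_C$: because $A=KC$ annihilates $\ker C$, every nonzero $v\in\ker C$ is a null eigenvector, so the ordinary resolvent may be ill-behaved near $\lambda=0$ even when $d(\lambda,\cF_C(A))>0$. The resolution is that the $C$-seminorm measures only the range-of-$C$ component: restricting the supremum defining $\|(\lambda I-A)^{-1}\|_C$ to vectors $w$ with $Cw\ne 0$, and using $\|(\lambda I-A)w\|_C\ge d(\lambda,\cF_C(A))\|w\|_C>0$ to guarantee the ratio is finite, makes the whole chain rigorous despite the degeneracy. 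I would state this restriction explicitly so that the seminorm null directions are seen to drop out of both sides of the estimate.
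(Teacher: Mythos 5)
Your proposal is correct and follows essentially the same route as the paper: both arguments apply the Cauchy--Schwarz inequality for the $C$-semi-inner product to $\langle u, C(\lambda I-A)u\rangle$, observe that the resulting $C$-Rayleigh quotient lies in $\cF_C(A)$ so its distance to $\lambda\in\Gamma$ is at least $d(\Gamma,\cF_C(A))$, and then invert the inequality via $u=(\lambda I-A)^{-1}v$. Your extra remarks on the seminorm degeneracy (restricting to $Cw\neq 0$) make explicit a point the paper leaves implicit, but do not change the argument.
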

\begin{proof}
Let $u=(\lambda I-A)^{-1} v\in \IC^n$. Then for each $\lambda\in \Gamma$,
\[
 d(\Gamma, \cF_C(A))\le \frac{|\langle u, C(\lambda I-A )u \rangle |}{\|u\|_C^2}=\|u\|_C^{-2}|\langle  u, v\rangle_C |\le \|u\|_C^{-1}\cdot  \|v\|_C
\]
Hence, for each vector $v$, we have 
\[
\frac{\| (\lambda I -A)^{-1} v\|_C}{\|v\|_C}=\frac{\|u\|_C}{\|v\|_C}\le d(\Gamma, \cF_C(A))^{-1},
\]
which completes the proof.
\end{proof}

\subsection{A posterior error bounds (residual)}\label{Posterior}
A posteriori error
estimates are crucial in practical computations, e.g.,  determining the dimension $m$ of the Krylov space for  (\ref{x_apr}) or the time span used 
in the matrix exponential.
In the following, we apply the residual arguments  in~\cite{Botchev} to estimate errors of (\ref{x_apr})  in the case of  $h_{m+1,m}\neq 0$. 
Here we focus on  the  term involving with $\phi_0$ in (\ref{x_apr}) for the sake of simplicity.

   \begin{prop}
 Let $y_m(t)$ be the first term of 
 the approximation of $x_\cR(t)$ in (\ref{solx3}), i.e., 
 the first term in (\ref{x_apr}), 
  \[y_m(t):=W_m \varphi_0(-tg_1(H_m) )W_m^\top C x(0)\in \cR.\]
Denote the residual function  by $r_m(t)$
\[
 r_m(t):=P_C G^{-1}C 
\frac{dy_m}{dt}+   y_m.\]
Then 
\beqq\label{residual}
r_m(t)=- \beta(t) P_C \{G^{-1}( C+\gamma G)  w_{m+1}\},
\eeqq
where $\beta(t)$ is a scalar, independent of whether $P_C$ is applied or not, 
 \begin{eqnarray}
&&\label{beta_m} \beta(t):=h_{m+1,m}\gamma^{-1} e_m^\top H_m^{-1}  \varphi_0(-tg_1(H_m) )W_m^\top C x(0)\\
&=&h_{m+1,m}\gamma^{-1} e_m^\top H_m^{-1}  \varphi_0(-tg_1(H_m) )\widetilde W_m^\top C x(0) \in \IR.
\end{eqnarray}
\end{prop}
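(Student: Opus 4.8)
The plan is to reduce the whole computation to a single operator identity for $P_C G^{-1} C W_m$ and then let the exponential algebra collapse. I write the $\varphi_0$-term as $y_m(t) = W_m\,\varphi_0(-t\,g_1(H_m)^{-1})\,\alpha$ with $\alpha := W_m^\top C x(0)$ (this is the $\varphi_0$ contribution to $x_a$ in the Theorem, where $f(H_m)=\varphi_0(-t\,g_1(H_m)^{-1})$). Differentiating in $t$ gives $\frac{dy_m}{dt} = -W_m\,g_1(H_m)^{-1}\varphi_0(-t\,g_1(H_m)^{-1})\,\alpha$, so that $P_C G^{-1} C\frac{dy_m}{dt}$ is governed entirely by the matrix $P_C G^{-1} C W_m$. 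Everything hinges on expressing this matrix through the Arnoldi data $(W_m,H_m,h_{m+1,m},w_{m+1})$.

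The key identity I would establish is
\[
P_C G^{-1} C W_m = W_m\,g_1(H_m) + h_{m+1,m}\,P_C G^{-1}(C+\gamma G)\,w_{m+1}\,e_m^\top (I-H_m)^{-1}.
\]
To obtain it, use the resolvent identity $C=(C+\gamma G)\widetilde S$ with $\widetilde S$ from (\ref{S_def}), which yields $G^{-1} C = (G^{-1}C+\gamma I)\widetilde S = (B+\gamma I)\widetilde S$. Applying this to $W_m$ and splitting $\widetilde S W_m = S W_m + (I-P_C)\widetilde S W_m$ with $S=P_C\widetilde S$, I substitute the Arnoldi relation (\ref{eq28}), $S W_m = W_m H_m + h_{m+1,m}w_{m+1}e_m^\top$. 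The two facts that make the projection harmless are $C P_C = C$ (hence $B=BP_C$ and $B(I-P_C)=0$) and $P_C(I-P_C)=0$: after left-multiplying by $B+\gamma I$ the $\cN$-piece becomes $\gamma(I-P_C)\widetilde S W_m$, and the final $P_C$ annihilates it. What remains is a Sylvester-type relation $X = X H_m + \gamma W_m H_m + h_{m+1,m}P_C(B+\gamma I)w_{m+1}e_m^\top$ for $X:=P_C G^{-1}C W_m$; solving $X(I-H_m)=\gamma W_m H_m+\cdots$ and recognizing $g_1(H_m)=\gamma H_m(I-H_m)^{-1}$ (from $g_1(\lambda)=\gamma\lambda/(1-\lambda)$) gives the displayed identity, where $B+\gamma I=G^{-1}(C+\gamma G)$ supplies exactly the vector $P_C G^{-1}(C+\gamma G)w_{m+1}$ appearing in (\ref{residual}).

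Substituting the identity into $P_C G^{-1}C\frac{dy_m}{dt}$, the $W_m g_1(H_m)$ term multiplies $-g_1(H_m)^{-1}\varphi_0(-t\,g_1(H_m)^{-1})\alpha$ and collapses to exactly $-y_m(t)$, cancelling the $+y_m(t)$ in $r_m(t)$. The surviving rank-one term gives $r_m(t)=-\beta(t)\,P_C G^{-1}(C+\gamma G)w_{m+1}$ with coefficient $\beta(t)=h_{m+1,m}\,e_m^\top (I-H_m)^{-1}g_1(H_m)^{-1}\varphi_0(-t\,g_1(H_m)^{-1})\alpha$. The closing step is the simplification $g_1(H_m)^{-1}=\gamma^{-1}(H_m^{-1}-I)$, whence $(I-H_m)^{-1}g_1(H_m)^{-1}=\gamma^{-1}H_m^{-1}$ (all factors are functions of $H_m$ and commute), producing $\beta(t)=h_{m+1,m}\gamma^{-1}e_m^\top H_m^{-1}\varphi_0(-t\,g_1(H_m)^{-1})W_m^\top C x(0)$, matching (\ref{beta_m}). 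The interchangeability of $W_m^\top C$ and $\widetilde W_m^\top C$ follows from $W_m=P_C\widetilde W_m$ and $P_C C=C$, which is precisely why the scalar $\beta$ is insensitive to whether $P_C$ is applied.

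The main obstacle is the key identity of the second paragraph: one must run the Arnoldi recursion for the \emph{projected} operator $S=P_C\widetilde S$ rather than $\widetilde S$, and argue that the unprojected $\cN$-component $(I-P_C)\widetilde S W_m$ disappears after left multiplication by $B+\gamma I=G^{-1}(C+\gamma G)$ followed by $P_C$ — this is exactly where the structure $B=BP_C$ is used. A secondary technical point is the invertibility of $I-H_m$ and $H_m$ needed to form $g_1(H_m)^{-1}$; these are guaranteed by the passivity of $H_m$ together with $\cF(H_m)\subset\cD(1/2,1/2)$ (the Remark after Prop.~\ref{Bound1}) and the nonsingularity from $G$ positive definite (Prop.~\ref{prop1.2} and the passivity Remark).
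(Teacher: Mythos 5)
Your proposal is correct and follows essentially the same route as the paper's own proof: the identity you establish for $P_C G^{-1} C W_m$ is exactly the paper's key equation (\ref{eq47}) (up to right-multiplication by $\gamma^{-1}(H_m^{-1}-I)$), derived in the same way from the Arnoldi relation (\ref{eq28}) together with $CP_C=C$, after which the $W_m g_1(H_m)$ part cancels against $y_m$ and the rank-one remainder produces $\beta(t)$. The only cosmetic difference is that you route the derivation through the factorization $G^{-1}C=(B+\gamma I)\widetilde S$ and an intermediate $(I-H_m)^{-1}$ that cancels at the end, whereas the paper states the identity directly in the $H_m^{-1}$ form.
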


\begin{proof}  
Let $y(t)$ denote  the corresponding term of the exact solution in (\ref{solx3}),   $y(t):=V_C  \exp(-tB_{1,1}^{-1}) V_C ^\top x(0)$. Note that    $y(t)$ satisfies 
\beqq\label{xt} P_CG^{-1} C
\frac{dy}{dt}+  y=V_C B_{1,1} V_C ^\top
\frac{dy}{dt}+  y=(-P_C+V_C V_C^\top  ) G^{-1} C V_C B_{1,1}^{-1}  \exp(-tB_{1,1}^{-1}) V_C ^\top x(0)=0.
\eeqq
Since 
$V_C B_{1,1}V_C^\top  W_m=V_C V_C^\top  G^{-1} C V_C V_C^\top  W_m=P_C G^{-1} C W_m,$
and from the definition of $g_1$,   $(W_m, H_m)$ satisfies    (\ref{eq28}), 
then 
we have
\begin{eqnarray}
&&\label{eq47} V_C  B_{1,1} V_C ^\top W_m g_1(H_m)=P_CG^{-1} C W_m (H_m^{-1}-I)\gamma^{-1}
\\
&=&P_CG^{-1}\{
G W_m +\gamma^{-1}(C+\gamma G) h_{m+1,m} w_{m+1} e_m^\top H_m^{-1}
\}.
\end{eqnarray}
Then
 \begin{eqnarray}\label{xtm}
 r_m(t)&=&P_C G^{-1}C 
\frac{dy_m}{dt}+   y_m=V_C  B_{1,1} V_C ^\top 
\frac{dy_m}{dt}+ y_m\\
&=&
\left\{-V_C  B_{1,1} V_C ^\top W_m g_1(H_m) + W_m \right\} \varphi_0(-tg_1(H_m) )W_m^\top C x(0)\\
&=&- \beta(t) P_C \{ G^{-1}( C+\gamma G)  w_{m+1}\},\label{rtm}
\end{eqnarray}
where we used (\ref{eq47}) to get the last equality.
\end{proof}

The following computation  provides one  connection  from  the residual estimate giving in (\ref{residual}) to the error estimate  under the assumption in (\ref{omega}). One major tool is  that by eigenvector decomposition of \[C_1^{1/2}B_{1,1}^{-1}C_1^{-1/2}=C_1^{-1/2} (V_C^\top G^{-1} V_C)^{-1} C_1^{-1/2}=XDX^{-1},\] 
there exist  $K>0$ and $\omega> 0$ depending on $B_{1,1}$,
 \beqq\label{omega}
\|\exp(-tB_{1,1}^{-1})\|_{C_1} \le K \exp(-t \omega).
\eeqq
 Here introduce  $C_1$-norm 
 \[
 \|x\|_{C_1}=\Re(x^* C_1 x)^{1/2}=\|C_1^{1/2} T x\|,\; 
  \|T \|_{C_1}:=\max_{x\neq 0}\frac{\Re((Tx)^* C_1 Tx)^{1/2}}{x^* C_1 x}=\max_{x\neq 0}\frac{\| Tx\|_{C_1}}{\| x\|_{C_1}}
  \] for vectors $x\in \IC^n$ and $T\in \IR^{n\times n}$. 
For instance,
 one can choose $K=\|X\| \|X^{-1}\|$ and choose  $\omega$  to be  the largest eigenvalue of \[  C_1^{1/2} (B_{1,1}^{-1}+(B_{1,1}^{-1})^\top)C_1^{-1/2}
/2=C_1^{-1/2}\frac{ (V_C^\top G^{-1} V_C)^{-1}+(V_C^\top (G^{-1})^\top V_C)^{-1}}{2} C_1^{-1/2}.\]

\begin{thm} 
Suppose $C, G$ satisfy (\ref{assGC}). 
Let $r_m(t)$ and $\beta$ be defined in (\ref{rtm}) and (\ref{beta_m}).
Let \[\epsilon_m(t)=y_m(t)-y(t)=
W_m \varphi_0(-tg_1(H_m) )W_m^\top C x(0)-V_C  \exp(-tB_{1,1}^{-1}) V_C ^\top x(0).
\]
Then (\ref{omega}) holds
for  some constants  $\omega, K$, depending on $B_{1,1}^{-1}$,  and
\begin{eqnarray}\label{bound_phi}
&& \| P_C \epsilon_m(t)\|_C \le K t \varphi_1(-t \omega) \max_{0\le s\le t}\| B_{1,1}^{-1} V_C ^\top   r_m(s)\|_{C_1}
\\
&\le& K t \varphi_1(-t \omega)  \cdot  \| (I+\gamma B_{1,1}^{-1}) V_C ^\top   w_{m+1}\|_{C_1} \cdot 
   \sup_{\theta\in [0,1]} \|\beta(t\theta )\|.\label{eq56}
\end{eqnarray}

\end{thm}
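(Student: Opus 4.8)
The plan is to convert the residual identity (\ref{rtm}) into a linear ODE for the error and then integrate it by the variation-of-constants formula, following the residual framework of~\cite{Botchev}. Throughout I use that both $y_m(t)$ and $y(t)$ lie in $\cR=\mathrm{range}(V_C)$ (columns of $W_m$ lie in $\cR$ by Prop.~\ref{prop1.6}, and $y(t)=V_C\exp(-tB_{1,1}^{-1})V_C^\top x(0)$ by construction), so $P_C\epsilon_m=\epsilon_m$ and I may write $\epsilon_m(t)=V_C e(t)$ with $e(t)=V_C^\top\epsilon_m(t)\in\IR^n$.

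First I would record (\ref{omega}). Since $B_{1,1}$ is invertible with spectrum in the open right half plane (Prop.~\ref{prop1.2}), so is $B_{1,1}^{-1}$; diagonalizing $C_1^{1/2}B_{1,1}^{-1}C_1^{-1/2}=XDX^{-1}$ gives $\|\exp(-tB_{1,1}^{-1})\|_{C_1}=\|X\exp(-tD)X^{-1}\|\le \|X\|\,\|X^{-1}\|\,\exp(-t\omega)$, so (\ref{omega}) holds with $K=\|X\|\,\|X^{-1}\|$ and the positive $\omega=\min_j\Re(D_{jj})$ (the smallest real part of the eigenvalues of $B_{1,1}^{-1}$), as indicated before the statement.

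Next I would derive the error equation. Subtracting the exact relation $P_CG^{-1}C\,\dot y+y=0$ of (\ref{xt}) from the definition $r_m=P_CG^{-1}C\,\dot y_m+y_m$ gives $P_CG^{-1}C\,\dot\epsilon_m+\epsilon_m=r_m$. Using $P_CG^{-1}CV_C=V_CV_C^\top G^{-1}CV_C=V_CB_{1,1}$ and left-multiplying by $V_C^\top$ reduces this to the $n$-dimensional system $B_{1,1}\dot e+e=V_C^\top r_m$, equivalently $\dot e+B_{1,1}^{-1}e=B_{1,1}^{-1}V_C^\top r_m$. With the standard initialization $w_1\propto P_Cx(0)$ one checks $y_m(0)=W_mW_m^\top C x(0)=P_Cx(0)=y(0)$, so $e(0)=0$. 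The variation-of-constants formula then gives $e(t)=\int_0^t\exp(-(t-s)B_{1,1}^{-1})B_{1,1}^{-1}V_C^\top r_m(s)\,ds$. Taking $C_1$-norms (using $\|V_Cz\|_C=\|z\|_{C_1}$), moving the norm inside the integral, inserting (\ref{omega}) to bound $\|\exp(-(t-s)B_{1,1}^{-1})\|_{C_1}\le K\exp(-(t-s)\omega)$, replacing the residual factor by its maximum over $[0,t]$, and evaluating $\int_0^t\exp(-(t-s)\omega)\,ds=t\varphi_1(-t\omega)$ yields the first inequality (\ref{bound_phi}).

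Finally, for (\ref{eq56}) I would simplify $B_{1,1}^{-1}V_C^\top r_m(s)$. From (\ref{rtm}) and $V_C^\top P_C=V_C^\top$, we get $V_C^\top r_m(s)=-\beta(s)V_C^\top(G^{-1}C+\gamma I)w_{m+1}$; since $w_{m+1}\in\cR$ we have $V_C^\top G^{-1}Cw_{m+1}=B_{1,1}V_C^\top w_{m+1}$, hence $V_C^\top r_m(s)=-\beta(s)(B_{1,1}+\gamma I)V_C^\top w_{m+1}$ and $B_{1,1}^{-1}V_C^\top r_m(s)=-\beta(s)(I+\gamma B_{1,1}^{-1})V_C^\top w_{m+1}$. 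Taking $C_1$-norms and maximizing over $s\in[0,t]$, with $\max_{0\le s\le t}|\beta(s)|=\sup_{\theta\in[0,1]}|\beta(t\theta)|$, produces (\ref{eq56}). I expect the main obstacle to be the passage from the residual identity to the clean scalar ODE with $e(0)=0$: it hinges on the two algebraic reductions $P_CG^{-1}CV_C=V_CB_{1,1}$ and $w_{m+1}\in\cR$, together with verifying that the initial error vanishes; once these are in place the remaining estimate is a routine Gr\"onwall-type bound.
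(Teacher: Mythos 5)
Your proposal is correct and follows essentially the same route as the paper: subtract the exact relation (\ref{xt}) from the residual identity to get $B_{1,1}\dot e+e=V_C^\top r_m$, integrate by variation of constants, bound the propagator via (\ref{omega}) to produce the $Kt\varphi_1(-t\omega)$ factor, and simplify $B_{1,1}^{-1}V_C^\top r_m$ to $-\beta(s)(I+\gamma B_{1,1}^{-1})V_C^\top w_{m+1}$ using $w_{m+1}\in\cR$. Your explicit verification that $\epsilon_m(0)=0$ and that $P_C\epsilon_m=\epsilon_m$ makes precise two points the paper leaves implicit, but the argument is the same.
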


\begin{proof} 
By (\ref{assGC}) and Prop.~\ref{FCB}, $\cF_C(B)$  lies in the right half plane. This establishes  the existence of $K$ and $\omega$ in (\ref{omega}). 
From (\ref{xt}) and (\ref{xtm}), we can establish one equation between 
the error vector $\epsilon_m (t)= y_m(t)-y(t)$ and the residual vector $r_m(t)$,
\[ V_C^\top   G^{-1} C V_C V_C^\top   \frac{d\epsilon_m(t)}{dt}+ V_C^\top   \epsilon_m(t)=V_C^\top  r_m(t).
\]
Thus variation of constants formula gives 
\begin{eqnarray}
&& V_C^\top    \epsilon_m(t)=
 V_C^\top   \epsilon_m(t)=\int_0^t  \exp(-(t-s) B_{1,1}^{-1} ) B_{1,1}^{-1} V_C ^\top  r_m(s) ds
\\
&=&
\int_0^1  \exp(-t(1-\theta) B_{1,1}^{-1} ) B_{1,1}^{-1} V_C ^\top  r_m(t\theta) d\theta.
\end{eqnarray}
Examine  the definition of $\varphi_1$,
\[
\varphi_k(-t B_{1,1}^{-1})=\int_0^1 \exp(-(1-\theta)t B_{1,1}^{-1}) \frac{\theta^{k-1}}{(k-1)!} d\theta, k\ge 1, \textrm{ which yields } \|\varphi_1(-t B_{1,1}^{-1})\|_{C_1}\le K \varphi_1(-t \omega). 
\]
Hence, we have the upper bound for the error vector, 
\begin{eqnarray}
&&\| V_C   \epsilon_m(t)\|_{C_1}\le 
\int_0^1\|  \exp(-t(1-\theta) B_{1,1}^{-1} ) \|_{C_1}  d\theta\,  \{ \sup_{\theta\in [0,1]} \|  B_{1,1}^{-1} V_C ^\top  r_m(t\theta)\|_{C_1}\}.
\\
&\le& K
\int_0^1  \exp(-t(1-\theta) \omega )  d\theta\,  \{  \sup_{\theta\in [0,1]} \| B_{1,1}^{-1} V_C ^\top r_m(t\theta)\|_{C_1}\}\\
&=& K
\varphi_1(-t \omega ) \,  \{  \sup_{\theta\in [0,1]} \| B_{1,1}^{-1} V_C ^\top  r_m(t\theta)\|_{C_1}\}.
\end{eqnarray}
The proof is completed by using  (\ref{residual}),
 \begin{eqnarray}
&&  \| B_{1,1}^{-1} V_C ^\top  r_m(t\theta)\|_{C_1}\le 
   \| B_{1,1}^{-1} V_C ^\top  P_C (G^{-1} C V_C V_C^\top  +\gamma I) w_{m+1}\|_{C_1} 
    \sup_{\theta\in [0,1]} \|\beta(t\theta )\|
  \\
  & =&  \| (I+\gamma B_{1,1}^{-1}) V_C ^\top   w_{m+1}\|_{C_1} \cdot 
   \sup_{\theta\in [0,1]} \|\beta(t\theta )\|.
 \end{eqnarray}

\end{proof}


\subsection{Error bound inequality}
The previous residual  analysis does not  explicitly reveal  the error convergence behavior as the Krylov dimension increases. In the following, we shall    establish one upper bound depending on   time span $t$, dimension  $m$ and $\gamma$ to show the convergence in computing
the matrix exponentials. 
Literatures\cite{Saad92}\cite{Hoch} show that   the error of $m$-dimensional approximations of matrix exponentials could decay at least linearly (super-linearly), as the Krylov dimension increases.  We shall 
examine  the case, where 
  the $C$-orthogonality Arnoldi iterations are employed to implement  the shift-and-invert method.  

 The following error bound  shows the effectiveness of  $C$-orthogonality Arnoldi algorithms in solving $x_\cR(t)$ of (\ref{sys}) under (\ref{assGC}).
  From (\ref{BS}), (\ref{solx3}) and (\ref{x_apr}), the quality of  $x_a$ in (\ref{x_apr}) can be analyzed in the following inequality,
\begin{eqnarray}
\label{x_bd0}
&&\| x_\cR (t)-x_a(t)\|_C \le 
  \|\{V_C  f(S_{1,1}) V_C ^\top  -W_m^{(0)} f(H_m^{(0)}) {W_m^{(0)} }^\top  C \} x(0) \|_C \\
  &+&
   \|\{V_C f_1(S_{1,1}) V_C ^\top  -W_m^{(1)}  f_1(H_m^{(1)} ){W_m^{(1)} }^\top  C \}u(0) \|_C \label{eq103'0}\\
   &+&
  \|\{V_C f_2(S_{1,1}) V_C ^\top  -W_m^{(2)}  f_2(H_m^{(2)} ){W_m^{(2)} }^\top  C \}u'(0) \|_C. \label{eq104'0}\end{eqnarray}

  \subsubsection{Convergence}
Suppose $G$ is only \textit{positive semi-definite}. 
The following Theorem~\ref{Thm3.1} is one error bound for $\varphi_l$ functions for $l\ge 1$, (from Theorem 5.9\cite{Tanja14}). 
Since the analysis cannot be used in the  $\varphi_0$-case, we consider $\varphi_1$ for the $x(0)$ term, i.e., 
with  $\varphi_0(-x)=(-x)\varphi_1(-x)+1$, we have
\[
 f(S_{1,1})= (-S_{1,1}) (f_1(S_{1,1}))+I,
\]
which gives the $\varphi_1$-computation for $f(S_{1,1})$, 
\begin{eqnarray}
&& V_C f(S_{1,1}) V_C^\top x(0)= (-V_CS_{1,1} V_C^\top ) V_C (f_1(S_{1,1})) V_C^\top x(0)+x(0),
\\
&\approx & (-V_CS_{1,1} V_C^\top ) ( W_m f_1(H_m) W_m^\top C)  x(0)+x(0).
\end{eqnarray}
Hence, 
\begin{eqnarray}
\label{x_bd}
&&\| x_\cR (t)-x_a(t)\|_C \le 
  \|\{(-S)\{ V_C f_1(S_{1,1}) V_C ^\top  - W_m^{(0)} f(H_m^{(0)}) {W_m^{(0)} }^\top  C \} x(0) \|_C \\
  &+&
   \|\{V_C f_1(S_{1,1}) V_C ^\top  -W_m^{(1)}  f_1(H_m^{(1)} ){W_m^{(1)} }^\top  C \}u(0) \|_C \label{eq103'}\\
   &+&
  \|\{V_C f_2(S_{1,1}) V_C ^\top  -W_m^{(2)}  f_2(H_m^{(2)} ){W_m^{(2)} }^\top  C \}u'(0) \|_C. \label{eq104'}\end{eqnarray}

Applying this theorem to~(\ref{x_bd}) gives Theorem~\ref{Thm3}, which describes the convergence to $x_\cR(t)$ in the positive semi-definite case. The convergence in $m$ is at least sub-linear. 
 \begin{thm}\label{Thm3.1}
 Let $A$ satisfy $\cF(A)\subseteq \IC_0^-$ and let $P_m=V_mV_m^\top$ be the orthogonal projection onto the shift-and-invert Krylov subspace
 $Q_m(A,v)$. For the restriction $A_m=P_m A P_m$ of $A$ to $Q_m(A,v)$, 
  we have the error bound
 \[
 \|\varphi_l(A) v-\varphi_l(A_m) v\|\le \frac{C(l,\gamma)}{m^{l/2}} \|v\|, \; l\ge 1. 
 \]
 \end{thm}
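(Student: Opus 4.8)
The plan is to reduce the matrix approximation error to a scalar best-approximation problem and then estimate the latter. The first observation is that the shift-and-invert Krylov subspace $Q_m(A,v)$ is just the ordinary polynomial Krylov subspace $\mathrm{span}\{v,Bv,\ldots,B^{m-1}v\}$ generated by the resolvent $B:=(I-\gamma A)^{-1}$, which is well defined because $\cF(A)\subseteq\IC_0^-$ forces the spectrum of $I-\gamma A$ into $\{\Re z\ge 1\}$. Hence $A_m=P_mAP_m$ is the compression of $A$ to that space, and by the standard near-optimality of Krylov projections the error $\varphi_l(A)v-\varphi_l(A_m)v$ is controlled by how well $\varphi_l$ can be reproduced on $A$ by rational functions of the form $r(z)=p\big((1-\gamma z)^{-1}\big)$ with $\deg p\le m-1$ (these are precisely the functions realizable on the Krylov space, and their pole $z=1/\gamma$ lies safely outside $\IC_0^-$).

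Second, I would pass from the matrix to a scalar sup-norm estimate by a numerical-range (Crouzeix--Palencia) argument: since $\cF(A)\subseteq\IC_0^-$, there is an absolute constant $K_0$ with
\[
\|\varphi_l(A)v-\varphi_l(A_m)v\|\le K_0\,\|v\|\,\min_{\deg p\le m-1}\ \sup_{z\in\IC_0^-}\big|\varphi_l(z)-p\big((1-\gamma z)^{-1}\big)\big|.
\]
Then, applying the Möbius change of variable $w=g(z):=(1-\gamma z)^{-1}$, which (exactly as in Prop.~\ref{Bound1}) maps $\IC_0^-$ conformally onto the disk $\cD(1/2,1/2)$ and carries $z=\infty$ to the boundary point $w=0$, the task becomes the best polynomial approximation of $\widetilde\varphi_l(w):=\varphi_l\big(g^{-1}(w)\big)$ of degree $m-1$ on $\cD(1/2,1/2)$.

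Third, I would analyze this scalar approximation. The function $\widetilde\varphi_l$ is analytic in the open disk and continuous up to the boundary; its only obstruction to analytic continuation across the boundary sits at $w=0$, where $g^{-1}(w)\to\infty$ and the factor $e^{g^{-1}(w)}$ in $\varphi_l$ produces an essential singularity. The decisive feature is the prefactor $z^{-l}$ in $\varphi_l(z)=z^{-l}\big(e^{z}-\sum_{k=0}^{l-1}z^k/k!\big)$: under the map it becomes $O(w^l)$, so the singular part behaves like $(\gamma w)^l e^{-1/(\gamma w)}$ near $w=0$, i.e.\ an oscillatory singularity of amplitude $O(|w|^l)$ on the boundary circle, while the analytic remainder is approximated at a geometric rate and is harmless. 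The rate is therefore dictated by the polynomial approximation of this weighted oscillatory singularity, which I would estimate through the decay of the Faber/Taylor coefficients of $\widetilde\varphi_l$ on the disk (a Cauchy-integral, stationary-phase type estimate), yielding the algebraic bound $O(m^{-l/2})$ with the $\gamma$-dependence absorbed into $C(l,\gamma)$.

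The main obstacle is precisely this last step: extracting the sharp exponent $l/2$ from the oscillatory, algebraically small singularity at $w=0$. An equivalent and arguably cleaner route, reusing the integral representation already used in this paper, is to write $\varphi_l(A)v-\varphi_l(A_m)v=\frac{1}{(l-1)!}\int_0^1\theta^{l-1}\big(e^{(1-\theta)A}v-e^{(1-\theta)A_m}v\big)\,d\theta$ and insert the (slowly converging, $\gamma$-dependent) shift-and-invert error bound for $\varphi_0=\exp$; the weight $\theta^{l-1}$, which vanishes to order $l-1$ at the endpoint responsible for the stagnation of the $\varphi_0$ case, is exactly what upgrades the non-convergent $l=0$ estimate to the $m^{-l/2}$ rate for $l\ge1$. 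Either way the delicate point is quantitative control of the near-boundary behavior; the remaining reductions are routine, and the full details are those of Theorem~5.9 in \cite{Tanja14}.
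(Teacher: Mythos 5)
You should first be aware that the paper contains no proof of Theorem~\ref{Thm3.1} to compare against: the statement is imported verbatim, explicitly attributed to Theorem 5.9 of \cite{Tanja14}, and the surrounding text only shows how it is \emph{applied} (to obtain Theorem~\ref{Thm3}). So the only question is whether your sketch stands on its own as a proof, and it does not, because the decisive estimate is asserted rather than derived. Your preliminary reductions are standard and correct: $Q_m(A,v)$ is the polynomial Krylov space of the resolvent $(I-\gamma A)^{-1}$, rational functions of the form $p\bigl((1-\gamma z)^{-1}\bigr)$ with $\deg p\le m-1$ are reproduced exactly by the projection, a Crouzeix-type bound converts the operator error into a sup-norm best-approximation problem over $\cF(A)\subseteq\IC_0^-$ (using $\cF(A_m)\subseteq\cF(A)$ for the compression), and the M\"obius substitution transfers this to polynomial approximation of $\widetilde\varphi_l(w)=\varphi_l(g^{-1}(w))$ on $\cD(1/2,1/2)$. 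But the entire content of the theorem is the claim that this last scalar quantity decays like $m^{-l/2}$, and at exactly that point you write that a ``Cauchy-integral, stationary-phase type estimate'' on the Faber/Taylor coefficients ``yields'' the bound, with no computation. That is the hard step: the singularity of $\widetilde\varphi_l$ sits \emph{on} the boundary of the disk at $w=0$, so the relevant coefficient sequence decays only subgeometrically, and extracting the precise exponent $l/2$ (rather than $l$, or $l-1$, or nothing) from the interplay of the amplitude $|w|^l$ and the unbounded phase $\Im(1/(\gamma w))$ requires a genuine saddle-point analysis --- which is precisely what the proof of Theorem 5.9 in \cite{Tanja14} consists of, and which you then defer to that same reference.

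Your fallback route has the same gap in a different guise, and as stated its mechanism is not right. Writing $\varphi_l(A)v-\varphi_l(A_m)v$ via $\varphi_l(A)=\frac{1}{(l-1)!}\int_0^1\theta^{l-1}e^{(1-\theta)A}\,d\theta$ and ``inserting the shift-and-invert error bound for $\varphi_0$'' cannot work with any bound uniform in $\theta$, because no $m$-convergent uniform bound for $\varphi_0$ exists in this half-plane setting (the paper itself documents the $\varphi_0$ stagnation); you would need a $\theta$-dependent estimate for $e^{(1-\theta)A}v-e^{(1-\theta)A_m}v$ and then show that its integral against $\theta^{l-1}\,d\theta$ produces $m^{-l/2}$. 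Note also that for $l=1$ the weight $\theta^{l-1}$ is identically $1$ and vanishes nowhere, yet the theorem still asserts $m^{-1/2}$ convergence, so ``the weight vanishes at the bad endpoint'' cannot be the explanation --- the gain must come from the integration (smoothing) itself, quantified. In short: your outline correctly locates where the difficulty lives and is consistent with how the cited result is actually proved, but the quantitative core that makes the exponent $l/2$ appear is missing from both of your routes, so the proposal is an annotated citation rather than a proof.
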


\begin{thm}\label{Thm3}
Suppose that $C,G$ are positive semi-definite and $C$ is symmetric.  Then $ \cF_C(G^{-1} C)$ lies in the right half complex plane. 
Replacement of the $x(0)$-term $W_m^{(0)} f(H_m^{(0)}) {W_m^{(0)} }^\top  C \} x(0)$ of $x_a(t)$  in (\ref{x_apr}) with  $ - S( W_m f_1(H_m) W_m^\top C)  x(0)+x(0)$.   
Then  \beqq\label{error0}
\| x_\cR (t)-x_a(t)\|_C \le   
 \frac{C(1,\gamma )}{m^{1/2}}  \|S\|_C  \|x(0)\|_C+  \frac{C(1,\gamma )}{m^{1/2}}   \|u(0)\|_C+ \frac{C(2,\gamma )}{m^{2/2}}  \|u'(0)\|_C.
\eeqq

\end{thm}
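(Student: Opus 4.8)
The plan is to conjugate the $C$-semi-inner-product setup to a Euclidean one via $C_1^{1/2}$, so that the structured Arnoldi process of Algorithm~\ref{algo_ortho_arnoldi_singular} becomes an ordinary shift-and-invert Arnoldi process of the type covered by Theorem~\ref{Thm3.1}, and then to apply that theorem to each summand of (\ref{x_bd})--(\ref{eq104'}). The right-half-plane claim comes first and almost for free: writing $G^{-1}C=KC$ with $K=G^{-1}$ positive semi-definite, the earlier proposition on $\cF_C(KC)$ places $\cF_C(G^{-1}C)=\cF_C(B)$ in the closed right half plane; by Prop.~\ref{Bound1} this confines $\cF_C(S)$ to $\cD(1/2,1/2)$, and by (\ref{HS}) the ordinary numerical range $\cF(H_m)$ lies there as well.

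The core of the argument is the conjugation. On $\cR$ set $C^{1/2}=V_C C_1^{1/2}V_C^\top$ and $\hat W_m=C^{1/2}W_m$; then $W_m^\top C W_m=I$ becomes $\hat W_m^\top\hat W_m=I$, the $C$-orthogonal recurrence (\ref{eq28}) for $S$ turns into the ordinary Arnoldi recurrence for $\hat S=C^{1/2}SC^{-1/2}$ with the same Hessenberg matrix $H_m=\hat W_m^\top\hat S\hat W_m$, and $\|\cdot\|_C$ transports to $\|\cdot\|$. Since $S=g(B)$ for the M\"obius map $g$ of Prop.~\ref{prop1.8}, the span $\{w_1,Sw_1,\dots,S^{m-1}w_1\}$ is the rational Krylov subspace $Q_m(A,v)$ of $A:=-tB_{1,1}^{-1}$ read through $C_1^{1/2}$, with shift $t/\gamma$. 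Moreover $\cF(A)=-t\,\cF_{C_1}(B_{1,1}^{-1})\subset\IC_0^-$: the field of values of $B_{1,1}$ is in the right half plane (Prop.~\ref{prop1.2}, Prop.~\ref{FCB}), and this is inherited by $B_{1,1}^{-1}$ because $\Re\langle M^{-1}x,x\rangle=\Re\langle My,y\rangle\ge 0$ for $y=M^{-1}x$. In these coordinates each bracketed difference in (\ref{x_bd})--(\ref{eq104'}) reduces to a shift-and-invert $\varphi_l$-approximation error $\|\varphi_l(A)v-\varphi_l(A_m)v\|$ with $A_m=P_mAP_m$, the quantity estimated by Theorem~\ref{Thm3.1}.

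With this dictionary the estimate is term-by-term. The $u(0)$ and $u'(0)$ contributions are $\varphi_1$- and $\varphi_2$-approximations, so Theorem~\ref{Thm3.1} yields $C(1,\gamma)m^{-1/2}\|u(0)\|_C$ and $C(2,\gamma)m^{-1}\|u'(0)\|_C$ (the shift $t/\gamma$ folded into the constants). For the $x(0)$ term, since Theorem~\ref{Thm3.1} excludes $l=0$, I would use $\varphi_0(-x)=(-x)\varphi_1(-x)+1$---the identity already used to pass from (\ref{x_bd0}) to (\ref{x_bd})---to rewrite the $\varphi_0$-error as $S$ applied to a $\varphi_1$-error, then pull out $\|S\|_C$. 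The latter is finite, indeed $\|S\|_C\le 2$, because $\cF_C(S)\subset\cD(1/2,1/2)\subset\cD(0,1)$ and Prop.~\ref{bound2} gives $\|S\|_C\le 2\rho$ with $\rho=1$. This produces $\|S\|_C\,C(1,\gamma)m^{-1/2}\|x(0)\|_C$, and summing the three pieces yields (\ref{error0}).

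The main obstacle is the middle step: certifying that the $C$-structured Arnoldi process is genuinely unitarily equivalent, through $C_1^{1/2}$, to the plain shift-and-invert process hypothesized in Theorem~\ref{Thm3.1}. Concretely one must pin down the operator $A=-tB_{1,1}^{-1}$, its shift $t/\gamma$, and the invertible prefactor $g_1^{-1}$ that separates $f_l$ from a bare $\varphi_l$, and then check---most delicately---that the compression the algorithm actually forms, $g_1(H_m)^{-1}$, coincides with the restriction $P_mAP_m$ used by Theorem~\ref{Thm3.1}, i.e.\ that applying $g_1$ to the Hessenberg compression reproduces the restricted operator. One must also confirm that the $\varphi_0\to\varphi_1$ reduction and the restriction to $\cR$ preserve both the $C$-orthonormality and the identity $H_m=\hat W_m^\top\hat S\hat W_m$, and that the transported field of values remains inside $\IC_0^-$.
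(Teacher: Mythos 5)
Your proposal follows essentially the same route as the paper's proof: conjugate the $C$-orthogonal Arnoldi process by $C^{1/2}$ so that it becomes an ordinary shift-and-invert Arnoldi process for $A=-C_1^{1/2}B_{1,1}^{-1}C_1^{-1/2}$ (your $-tB_{1,1}^{-1}$ with shift $t/\gamma$ is the same up to a rescaling convention), verify $(I-\gamma A)^{-1}=C_1^{1/2}S_{1,1}C_1^{-1/2}$ and $A_m=P_m\gamma^{-1}(I-H_m^{-1})P_m$, apply Theorem~\ref{Thm3.1} with $l=1,2$ to the $u(0)$ and $u'(0)$ terms, and handle the $x(0)$ term via the $\varphi_0(-x)=(-x)\varphi_1(-x)+1$ rewriting with the factor $\|S\|_C$ pulled out. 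The "obstacles" you flag are exactly the identities the paper checks, so the argument is correct and matches the paper's.
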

  
\begin{proof}
We shall verify the conditions stated in Theorem~\ref{Thm3.1}.
 Let \beqq
 \label{Adef}
 A:=-C_1^{1/2} B_{1,1}^{-1} C_1^{-1/2}.
 \eeqq
  Since $V_C^\top G^{-1} V_C$ is postive semi-definite, then  the positive definite condition on $G$ together with the calculation  \[
-A= C_1^{1/2} B_{1,1}^{-1} C_1^{-1/2}=C_1^{1/2} (V_C^\top G^{-1} C V_C)^{-1} C_1^{-1/2}=C_1^{-1/2} (V_C^\top G^{-1} V_C)^{-1}  C_1^{-1/2}.
 \]
implies that  the numerical range $\cF(-A)$ lies in the right half complex plane.
Let $Q_m(A,v)$ be  the shift-and-invert Krylov subspace
\[
Q_m(A,v)=span\{v, ( I-\gamma A)^{-1} v, \ldots, ( I-\gamma A)^{-(m-1)} v\}.
\]
Note that  the definition of $S_{1,1}$ gives  \beqq\label{SBeq}
 S_{1,1}=V_C^\top (C+\gamma G)^{-1} CV_C =(I+\gamma B_{1,1}^{-1})^{-1},
 \eeqq
 and 
 \[
 A=-\gamma^{-1}  C_1^{1/2} (S_{1,1}^{-1}-I)C_1^{-1/2}. 
 \]
 From (\ref{Adef}), we have
\[
(I-\gamma A)^{-1}=C_1^{1/2} ( I+ \gamma B_{1,1}^{-1})^{-1} C_1^{-1/2}=C_1^{1/2} S_{1,1} C_1^{-1/2}.
\] 
Thus, the subspace $Q_m(A,v)$ is  actually  the Krylov subspace $K_m(C_1^{1/2} S_{1,1} C_1^{-1/2} , v)$, i.e., 
\[
Q_m(A,v)=span\{v, C_1^{1/2} S_{1,1} C_1^{-1/2} v, \ldots, C_1^{1/2} S_{1,1}^{m-1} C_1^{-1/2} v\}.
\]
Let $V_m$ consist of orthogonal basis vectors in $K_m(C_1^{1/2} S_{1,1} C_1^{-1/2} , v)$. Then we have Arnoldi decomposition under Gram-Schmidt process for some upper Hessenberg matrix $H_m$, 
\beqq\label{eq108}
(I-\gamma A)^{-1}V_m=
C_1^{1/2} S_{1,1} C_1^{-1/2}V_m=V_m H_m.
\eeqq
The orthogonality $V_m^\top V_m=I$ gives
\[
H_m=V_m^\top C_1^{1/2} S_{1,1} C_1^{-1/2}V_m.
\]
Simplifying 
 (\ref{eq108}) yileds
\[
V_m (I-H_m^{-1})=\gamma A V_m. 
\]
Let $P_m:=V_mV_m^\top $ be  the orthogonal projection onto $Q_m(A,v)$, and $A_m$ be the restriction of $A$ on $Q_m(A,v)$, 
\[
A_m=P_m A P_m=P_m \gamma^{-1} (I-H_m^{-1}) P_m.
\]
Let $v=C^{1/2} u(0)$ and $V_m =C^{1/2} W_m$.
The construction of $W_m$ ensures its columns lying in the range of $V_C$.  Theorem~\ref{Thm3.1} indicates
\begin{eqnarray}
&&\| \{V_C f_l(S_{1,1}) V_C^\top-W_m f_l(H_m) W_m^\top C\} u(0) \|_C\\
&=&
\| C^{1/2}\{V_C f_l(S_{1,1}) V_C^\top-W_m f_l(H_m) W_m^\top C^{1/2}\} v \|
\\
&=&
\|V_C^\top  C^{1/2} V_C f_l(S_{1,1}) V_C^\top-V_C^\top V_m f_l(H_m) W_m^\top C^{1/2}\} v \|
\\
&=& \left\|C_1^{1/2} \varphi_l( \gamma^{-1}(I-S_{1,1}^{-1})) C_1^{-1/2}v - P_m\varphi_l( \gamma^{-1} (I-H_m^{-1}) ) P_m v \right \|\\
&=& \|\varphi_l(A) v-\varphi_l(A_m) v \|
\le \frac{C(l,\gamma)}{m^{l/2}} \|u(0)\|_C,\label{u0term}
\end{eqnarray}
where $C(l,\gamma)$ is a constant depending on $l,\gamma$, but independent of $m$ or $A$.  Take $l=1$ for the $u(0)$-term. Similar arguments apply to  the $u'(0)$-term. 
Lastly, for the first term involving $x(0)$,  since  $V_CS_{1,1} V_C^\top =S$,   the difference of $\varphi_1$ tends to $0$ as $m\to \infty$.
\end{proof}
  
\subsubsection{Linear convergence}
When  $G$ is  positive definite, we can derive  (\ref{error1})   under the framework  in~\cite{Hoch}.
 We  estimate the error $\{V_C f(S_{1,1})-  W_m^{(0)} f(H_m)^{(0)} {W_m^{(0)} }^\top  C V_C \} v$ in (\ref{x_bd})
 for any nonzero vector 
 $w=V_C v$ as follows.
Since $f$ in (\ref{def_f}) is an analytic function on $\IC-\{0\}$,  $f(S_{1,1}) v$ and its Krylov space approximation  have the   Cauchy integral expression (Definition 1.11~\cite{Higham}) 
 \begin{eqnarray}
&&V_C  f(S_{1,1})  v=\frac{1}{2\pi i}\int_\Gamma f(\lambda) V_C (\lambda I- V_C^\top  S V_C )^{-1}  v d\lambda=\frac{1}{2\pi i}\int_\Gamma f(\lambda)  (\lambda I- S )^{-1}  w d\lambda,\label{eq69}\\
&&  W_m f(H_m)  W_m^\top C V_C v=\frac{1}{2\pi i}\int_\Gamma f(\lambda) W_m (\lambda I-H_m)^{-1} W_m^\top w d\lambda,\label{eq70}
\end{eqnarray}
where $\Gamma$ can be a closed contour enclosing all the eigenvalues of $S_{1,1}:=V_C^\top  S V_C$, but not enclosing $0$.
  The following  shows the effectiveness of  $C$-orthogonality Arnoldi algorithms in solving $x_\cR(t)$ of (\ref{sys}) under (\ref{assGC}).
   Since $\rho_0/r<1$, the error tends to $0$ as $m\to \infty$. The proof is listed in the appendix. 
\begin{theorem}\label{thm4}
Suppose $C,G$ satisfy (\ref{assGC}). Then $ \cF_C(B)$ are bounded by    $\cD(c_1,\rho_1)$ with   a real number $c_1>\rho_1$, i.e., $0$ not inside $\cF_C(B)$ and thus    Prop.~\ref{Bound1} indicates that  $\cF_C(S)$ is bounded by a disk   $\cD(c_0, \rho_0)$ with $c_0>\rho_0$.   Take $\Gamma$ as one circle with centre $c_0$ and radius $r\in (\rho_0, c_0)$. Then  \beqq\label{error1}
\| x_\cR (t)-x_a(t)\|_C \le \max_{\lambda\in \Gamma}( |f(\lambda)|\|x(0)\|_C+|f_1(\lambda)|\|u(0)\|_C+|f_2(\lambda)|\|u'(0)\|_C ) \cdot  \frac{4 }{ (r-\rho_0)}   (\frac{\rho_0}{r})^m.
\eeqq

\end{theorem}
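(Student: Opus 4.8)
The plan is to reduce the claim to three separate matrix-function errors and to bound each by the best polynomial approximation of the corresponding $\varphi$-symbol over the disk $\cD(c_0,\rho_0)$. Starting from the splitting in (\ref{x_bd0}), $\|x_\cR(t)-x_a(t)\|_C$ is controlled by three terms of the shape $\|[V_C h(S_{1,1})V_C^\top - W_m h(H_m)W_m^\top C]w\|_C$ with $(h,w)$ ranging over $(f,x(0))$, $(f_1,u(0))$ and $(f_2,u'(0))$. Since the columns of $W_m$ lie in $\cR=\mathrm{range}(V_C)$ and $S_{1,1}=V_C^\top S V_C$, each term equals $\|[h(S)-W_m h(H_m)W_m^\top C]w\|_C$ for $w\in\cR$, and the Cauchy representations (\ref{eq69})--(\ref{eq70}) express it as a contour integral of $h(\lambda)$ against the resolvent difference over $\Gamma$. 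I would exploit the analyticity of $f,f_1,f_2$ from (\ref{def_f})--(\ref{f1}) on this contour.

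The key per-term estimate would be obtained as follows. First I would invoke the polynomial exactness of the $C$-orthogonal Arnoldi recursion (\ref{eq28}): an easy induction gives $S^k w_1=W_m H_m^k e_1$ for $k\le m-1$, so $W_m p(H_m)W_m^\top C w=p(S)w$ for every polynomial $p$ of degree at most $m-1$ when $w=\|w\|_C w_1$ is the normalized starting vector. Replacing $h$ by $h-p$ then gives $[h(S)-W_m h(H_m)W_m^\top C]w=(h-p)(S)w-W_m(h-p)(H_m)W_m^\top C w$, whence, using the $C$-isometry $\|W_m y\|_C=\|y\|_2$ and $\|W_m^\top Cw\|_2=\|w\|_C$, the error is at most $(\|(h-p)(S)\|_C+\|(h-p)(H_m)\|_2)\|w\|_C$. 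By Prop.~\ref{Bound1} the range $\cF_C(S)$ lies in $\cD(c_0,\rho_0)$, and by (\ref{HS}) so does $\cF(H_m)$. A Berger--Stampfli/von Neumann functional-calculus bound for the disk (the mapping theorem of \cite{Berger} already used in Prop.~\ref{Bound1}) then yields $\|(h-p)(S)\|_C\le 2\max_{\cD(c_0,\rho_0)}|h-p|$ and the analogous bound on $H_m$, so the constant $4=2+2$ emerges. Choosing $p$ to be the degree-$(m-1)$ Taylor (Faber) polynomial of $h$ about $c_0$ and bounding the Hermite/Cauchy remainder over $\Gamma$ gives $\max_{z\in\cD(c_0,\rho_0)}|h(z)-p(z)|\lesssim \max_{\lambda\in\Gamma}|h(\lambda)|\,(\rho_0/r)^m$, where $(r-\rho_0)^{-1}$ reflects the distance $d(\Gamma,\cD(c_0,\rho_0))=r-\rho_0$, the same quantity that bounds the resolvent via Prop.~\ref{bound3}. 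This supplies the geometric factor $(\rho_0/r)^m$.

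Assembling the three contributions with $h=f,f_1,f_2$ and $w=x(0),u(0),u'(0)$, and collecting the function maxima under a single $\max_{\lambda\in\Gamma}$ (legitimate since the three share the contour $\Gamma$), produces the bound (\ref{error1}); since $\rho_0/r<1$ by Prop.~\ref{Bound1}, the right-hand side tends to $0$ as $m\to\infty$. I expect the main obstacle to be the second step: establishing the functional-calculus inequality $\|q(S)\|_C\le 2\max_{\cF_C(S)}|q|$ in the $C$-\emph{semi}-inner product for the non-normal operator $S$, together with its companion $\|q(H_m)\|_2\le 2\max_{\cF(H_m)}|q|$ for the non-normal Hessenberg matrix, and extracting the clean rate $(\rho_0/r)^m$ rather than a $(2\rho_0/r)^m$ rate. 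This is precisely where the disk geometry of $\cD(c_0,\rho_0)$ and the sharp Berger--Stampfli constant are needed; a secondary check is that $f,f_1,f_2$ are analytic on the closed disk $\cD(c_0,r)$, which holds because each is analytic on $\IC\setminus\{0\}$ and $r<c_0$ keeps $0$ outside $\cD(c_0,r)$, so the Taylor/Cauchy estimates apply.
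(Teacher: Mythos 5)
Your proposal is correct in substance but follows a genuinely different route from the paper's proof. The paper stays inside the Cauchy integral: it introduces the resolvent difference $\Delta_m:=(\lambda I-S)^{-1}-W_m(\lambda I-H_m)^{-1}W_m^\top C$, observes that $\Delta_m(\lambda I-S)W_m=0$ so that $\Delta_m w=\Delta_m p_m(S;\lambda)w$ for any degree-$m$ polynomial with $p_m(\lambda;\lambda)=1$, picks $p_m(z;\lambda)=((z-c_0)/r)^m$, and then combines the resolvent bound $\|\Delta_m\|_C\le 2\,d(\Gamma,\cF_C(S))^{-1}$ from Prop.~\ref{bound3} with Pearcy's power inequality and Prop.~\ref{bound2} (giving $\|((S-c_0I)/\rho_0)^m\|_C\le 2$) to obtain the factor $\frac{4}{r-\rho_0}(\rho_0/r)^m$. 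You instead dispense with the resolvent difference: you use exactness of the $C$-orthogonal Arnoldi projection on polynomials of degree $\le m-1$ applied to the starting vector, subtract the Taylor polynomial of the symbol about $c_0$, bound both $\|(h-p)(S)\|_C$ and $\|(h-p)(H_m)\|$ by twice the sup of $|h-p|$ on the disk, and control the Taylor tail by the Hermite remainder over $\Gamma$. This is the classical ``near-best polynomial approximation'' argument and it makes the origin of the rate $(\rho_0/r)^m$ more transparent; the paper's argument has the advantage of needing the numerical-radius machinery only for the single monomial $((z-c_0)/\rho_0)^m$ rather than for a general analytic function. Two points you should make explicit to close the argument. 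First, the constant $2$ in $\|g(S)\|_C\le 2\sup_{\cD(c_0,\rho_0)}|g|$ is only available (via Berger--Stampfli/Okubo--Ando) when the rescaled function vanishes at the centre, i.e.\ when $g(c_0)=0$; your choice of the Taylor polynomial about $c_0$ guarantees $(h-p)(c_0)=0$, but this is the load-bearing detail and must be stated --- alternatively you can avoid the mapping theorem altogether by expanding the tail $\sum_{k\ge m}a_k(z-c_0)^k$ and applying Pearcy's power inequality term by term, with $|a_k|\le \max_\Gamma|h|/r^k$ from Cauchy's estimates, which reproduces the same geometric sum. Second, the polynomial-exactness identity $W_m p(H_m)W_m^\top C w=p(S)w$ holds only when $w$ is (a multiple of) the first Arnoldi vector, so each of $x(0)$, $u(0)$, $u'(0)$ must be paired with its own Krylov pair $(W_m^{(j)},H_m^{(j)})$, consistent with (\ref{x_bd0}). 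Both routes produce the same bound up to a benign factor of $r$ coming from the contour length or the Hermite kernel, so neither is sharper than the other in any essential way.
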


\subsection{ Upper bounds  $E(\gamma)$ with $h/\gamma$  fixed}
From Prop.~\ref{FCB}, $\cF_C(B)$ lies in the right half plane with $c_1>\rho_1>0$,
 \[ \cF_C(B)\subset \cD(c_1,\rho_1).\]
 Let
$\mu_1:=c_1-\rho_1, \mu_2:=c_1+\rho_1$ be  lower and upper bounds for $\Re(\cF_C(B))$, respectively.  Since 
M{\"o}bius transformations map generalized circles to generalized circles, the function $g$ maps
  $\cD(c_1,\rho_1)$ in the $\mu$-plane  to $\cD(c_0, \rho)$ in the $\lambda$-plane, where $c_0, \rho$ are  functions of $\gamma$, 
 \beqq\label{rho_fun}
 c_0=\frac{1}{2}\left((1+\gamma/\mu_2)^{-1}+(1+\gamma/\mu_1)^{-1}\right),\;
 \rho=\frac{1}{2}\left((1+\gamma/\mu_2)^{-1}-(1+\gamma/\mu_1)^{-1}\right).\eeqq
 Consider the $\varphi_0$ case,  $f$ defined in (\ref{def_f}) with $t=h$, 
 \[
 f(\lambda)=\exp(-(h/\gamma) (\lambda^{-1}-1)).
 \]
One upper bound for the right hand side of  (\ref{error1}) is given by
\beqq\label{eq82}
|f(c_0+r)|\cdot \frac{4}{(r-\rho)}\cdot (\frac{\rho}{r})^m.
\eeqq
To simplify the computation, 
choose $\Gamma$ to be one  circle tangent to the imaginary axis at $0$, sharing the same centre with $\cD(c_0, \rho)$, i.e., $r=c_0$ is chosen. Here we are interested in asymptotic results, i.e.,  $m\to \infty$, thus
for the sake of simplicity, 
we
omit the absolute constant $4$ in (\ref{eq82}), 
\beqq\label{Efun}
E(\gamma):=\exp((h/\gamma)(1-(2c_0)^{-1}))\left(\frac{\rho}{c_0}\right)^m \frac{1}{(c_0-\rho)}.
\eeqq

\subsubsection{$\phi_0$ functions }\label{case1}
Suppose  the eigenvalue information on $B_{1,1}$ is not available. It is natural to choose $\gamma$  proportional to  $h$, as in  \cite{AWESOME}.
The following  computation gives qualitative analysis on $E$ with respect to  $\gamma$. Here we focus on the  $\varphi_0$ case. Arguments can be applied to other $\varphi_k$ functions after some proper modifications.  The proofs are tedious, and placed in the appendix. 
 Introduce $\rho_*, \gamma_*$ as follows, where $c_0(\gamma_*)=1/2$: 
 \[ \gamma_*=\sqrt{\mu_1\mu_2},\; \rho(\gamma_*)=\rho_*:=\frac{1}{2}\frac{\sqrt{\mu_2}-\sqrt{\mu_1}}{\sqrt{\mu_2}+\sqrt{\mu_1}}. \]

The following shows  that   the base $\rho/c_0$ of $(\rho/c_0)^m$ in $E$ gets smaller, as $\gamma$ gets close to $0$. In particular, at $\gamma=\gamma_*$, 
\[
\frac{\rho}{c_0}=\frac{\sqrt{\mu_2}-\sqrt{\mu_1}}{\sqrt{\mu_2}+\sqrt{\mu_1}}.
\]

\begin{prop}\label{num}
As $\gamma$ increases in $[0,\infty)$, the radius  ratio 
\[
\frac{\rho}{c_0}=\frac{(\mu_2-\mu_1)\gamma}{\mu_1(\mu_2+\gamma)+\mu_2(\mu_1+\gamma)}
\] 
 increases. 
\end{prop}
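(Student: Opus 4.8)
The plan is to prove the monotonicity of the explicit rational function
\[
h(\gamma):=\frac{\rho}{c_0}=\frac{(\mu_2-\mu_1)\gamma}{\mu_1(\mu_2+\gamma)+\mu_2(\mu_1+\gamma)}
\]
directly, exploiting the sign information $0<\mu_1<\mu_2$ inherited from Prop.~\ref{FCB}. Recall that $\mu_1:=c_1-\rho_1$ and $\mu_2:=c_1+\rho_1$, so by $c_1>\rho_1>0$ in (\ref{ass1}) both quantities are strictly positive and $\mu_2-\mu_1=2\rho_1>0$. These inequalities are exactly what the argument will hinge on.

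First I would re-derive the stated closed form, to fix notation and signs. Writing $a:=(1+\gamma/\mu_1)^{-1}=\mu_1/(\mu_1+\gamma)$ and $b:=(1+\gamma/\mu_2)^{-1}=\mu_2/(\mu_2+\gamma)$, the definitions in (\ref{rho_fun}) read $c_0=(a+b)/2$ and $\rho=(b-a)/2$, so that $\rho/c_0=(b-a)/(b+a)$. Clearing the common denominator $(\mu_1+\gamma)(\mu_2+\gamma)$ in both $b-a$ and $b+a$ cancels it from the ratio and produces exactly the displayed expression, with numerator $(\mu_2-\mu_1)\gamma$ and denominator $\mu_1(\mu_2+\gamma)+\mu_2(\mu_1+\gamma)=2\mu_1\mu_2+(\mu_1+\mu_2)\gamma$; in particular $b>a$ because $\mu_1<\mu_2$, so $\rho>0$ as required.

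The key step is then a one-line monotonicity argument. For $\gamma>0$, dividing numerator and denominator by $\gamma$ gives
\[
h(\gamma)=\frac{\mu_2-\mu_1}{(\mu_1+\mu_2)+2\mu_1\mu_2\,\gamma^{-1}}.
\]
Since $\mu_2-\mu_1>0$ is a positive constant and the term $2\mu_1\mu_2\gamma^{-1}$ is strictly decreasing in $\gamma$ (as $\mu_1\mu_2>0$), the positive denominator is strictly decreasing, whence $h$ is strictly increasing on $(0,\infty)$, with $h(0)=0$. Equivalently, one can differentiate: with $N(\gamma)=(\mu_2-\mu_1)\gamma$ and $D(\gamma)=2\mu_1\mu_2+(\mu_1+\mu_2)\gamma$, the quotient rule yields $h'=(N'D-ND')/D^2$ and $N'D-ND'=(\mu_2-\mu_1)\cdot 2\mu_1\mu_2>0$, so $h'>0$.

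There is essentially no obstacle beyond bookkeeping: the only point requiring care is that the sign conditions $\mu_1>0$ and $\mu_2>\mu_1$ genuinely hold, and these are guaranteed by Prop.~\ref{FCB} via $c_1>\rho_1>0$. As a consistency check I would also evaluate $h$ at $\gamma_*=\sqrt{\mu_1\mu_2}$: factoring $\sqrt{\mu_1\mu_2}$ out of the denominator and using $\mu_2-\mu_1=(\sqrt{\mu_2}-\sqrt{\mu_1})(\sqrt{\mu_2}+\sqrt{\mu_1})$ together with $(\sqrt{\mu_1}+\sqrt{\mu_2})^2=\mu_1+2\sqrt{\mu_1\mu_2}+\mu_2$ recovers the value $(\sqrt{\mu_2}-\sqrt{\mu_1})/(\sqrt{\mu_2}+\sqrt{\mu_1})$ displayed just before the proposition, confirming the formula.
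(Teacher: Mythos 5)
Your proof is correct. It takes a somewhat more elementary route than the paper's: you work directly with the closed form $h(\gamma)=(\mu_2-\mu_1)\gamma/(2\mu_1\mu_2+(\mu_1+\mu_2)\gamma)$ and observe that after dividing through by $\gamma$ the denominator $(\mu_1+\mu_2)+2\mu_1\mu_2\gamma^{-1}$ is strictly decreasing, so no differentiation is needed at all (your quotient-rule computation $N'D-ND'=2\mu_1\mu_2(\mu_2-\mu_1)>0$ is an equivalent check). The paper instead differentiates $\rho$ and $c_0$ separately from (\ref{rho_fun}) and assembles $\frac{d}{d\gamma}(\log\rho-\log c_0)=2\bigl((\mu_1^{-1}+\mu_2^{-1})\gamma^2+2\gamma\bigr)^{-1}>0$. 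The two are consistent -- your $h'/h$ simplifies to exactly that expression -- but the paper's version has the side benefit of producing the explicit logarithmic-derivative formula that is reused verbatim in the proof of Prop.~\ref{prop3.5} and in the $\varphi_k$ error-bound proposition, whereas yours buys brevity and transparency. Your attention to the sign hypotheses ($\mu_1=c_1-\rho_1>0$, $\mu_2-\mu_1=2\rho_1>0$ from Prop.~\ref{FCB}) and the consistency check at $\gamma_*=\sqrt{\mu_1\mu_2}$ are both sound.
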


 Prop.~\ref{prop3.5} indicates  that when $\delta=h/\gamma$ is kept fixed,   the slope of $E(\gamma)$  decreases as $\gamma$ increases from $0$ to $\infty$. The graph of  $E(\gamma)$ asymptotically  looks like a $\cap$-shaped curve. 
In particular, $E(\gamma)$ can decay rapidly when 
  $\gamma$ is sufficiently larger than $\mu_2$.
  

%
%
%

\begin{prop}\label{prop3.5}
  Let  $\delta=h/\gamma$ fixed. 
Let $\omega=\mu_1/\mu_2$ and \[
\epsilon(\gamma)=\delta-\frac{2m\omega}{1+3\omega}(1+\sqrt{\omega})^2-\frac{(1+\sqrt{\omega})^2}{1+\omega}.
\]
For  $\omega:=\mu_1/\mu_2$ close to $0$ with $\epsilon>0$ , we have
\[
- \frac{d}{d\gamma}\log E(\gamma)\ge  (\sqrt{\mu_1}+\sqrt{\mu_2})^{-2} \epsilon.
\]
Then $E(\gamma)$ has the exponential decay for $\gamma>\mu_2$,
\[
E(\gamma)=E(\mu_2)\exp(-\epsilon (\gamma-\mu_2)(\sqrt{\mu_1}+\sqrt{\mu_2})^{-2}).
\]

\end{prop}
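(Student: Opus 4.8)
The plan is to reduce the differential inequality to three independent one-variable estimates and then integrate. First I would substitute the expressions for $c_0$ and $\rho$ from (\ref{rho_fun}) into the definition (\ref{Efun}) and take logarithms. Writing $a:=(1+\gamma/\mu_2)^{-1}=\mu_2/(\mu_2+\gamma)$ and $b:=(1+\gamma/\mu_1)^{-1}=\mu_1/(\mu_1+\gamma)$, so that $c_0=(a+b)/2$, $\rho=(a-b)/2$, and crucially $c_0-\rho=b$, this gives
\[\log E(\gamma)=\delta\Big(1-\frac{1}{2c_0}\Big)+m\log\frac{\rho}{c_0}-\log b,\]
with $\delta=h/\gamma$ held fixed. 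Differentiating term by term, using $a'=-a^2/\mu_2$ and $b'=-b^2/\mu_1$, splits $-\tfrac{d}{d\gamma}\log E(\gamma)$ into one positive ``exponential'' contribution from the first term minus the two positive contributions coming from the $(\rho/c_0)^m$ and $1/(c_0-\rho)$ factors.

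Next I would bound the three pieces separately and uniformly for $\gamma\ge\mu_2$. Since $c_0'=-\tfrac12(a^2/\mu_2+b^2/\mu_1)$, the exponential contribution equals $\tfrac{\delta}{4c_0^2}(a^2/\mu_2+b^2/\mu_1)$, and the Cauchy--Schwarz (Titu) inequality $a^2/\mu_2+b^2/\mu_1\ge(a+b)^2/(\mu_1+\mu_2)=4c_0^2/(\mu_1+\mu_2)$ yields the clean lower bound $\delta/(\mu_1+\mu_2)=\delta/[\mu_2(1+\omega)]\ge\delta/[\mu_2(1+\sqrt\omega)^2]$, valid for every $\gamma$. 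The third contribution is $\tfrac{d}{d\gamma}(-\log b)=-b'/b=1/(\mu_1+\gamma)$, which is decreasing, hence $\le 1/(\mu_1+\mu_2)=1/[\mu_2(1+\omega)]$ on $\gamma\ge\mu_2$. For the middle piece the key simplification is that $\rho/c_0=(a-b)/(a+b)$ collapses to the M\"obius function $R(\gamma)=(\mu_2-\mu_1)\gamma/[(\mu_1+\mu_2)\gamma+2\mu_1\mu_2]$, so that
\[\frac{d}{d\gamma}\log R=\frac{1}{\gamma}-\frac{1}{\gamma+\kappa}=\frac{\kappa}{\gamma(\gamma+\kappa)},\qquad \kappa:=\frac{2\mu_1\mu_2}{\mu_1+\mu_2},\]
which is manifestly decreasing (consistent with Prop.~\ref{num}); it is therefore maximized at $\gamma=\mu_2$, giving $m\tfrac{d}{d\gamma}\log R\le 2m\omega/[\mu_2(1+3\omega)]$.

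Combining the three bounds and factoring out $\tfrac{1}{\mu_2(1+\sqrt\omega)^2}=(\sqrt{\mu_1}+\sqrt{\mu_2})^{-2}$ assembles exactly the bracketed expression defining $\epsilon$, proving $-\tfrac{d}{d\gamma}\log E(\gamma)\ge(\sqrt{\mu_1}+\sqrt{\mu_2})^{-2}\epsilon$. Because $\omega$, $\delta$, and $m$ are all independent of $\gamma$, the right-hand side is a constant; integrating over $[\mu_2,\gamma]$ and exponentiating yields $E(\gamma)\le E(\mu_2)\exp(-(\sqrt{\mu_1}+\sqrt{\mu_2})^{-2}\epsilon(\gamma-\mu_2))$, the claimed decay. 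The smallness of $\omega$ enters only to guarantee $\epsilon>0$, since $\epsilon\to\delta-1$ as $\omega\to0$, so the estimate is informative precisely when $\delta>1$ and $\omega$ is small.

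The step I expect to be the main obstacle is the middle term. Bounding $m\tfrac{d}{d\gamma}\log(\rho/c_0)$ directly in terms of $a,b$ is unpleasant, and it is not a priori obvious that this contribution is maximized at the endpoint $\gamma=\mu_2$ rather than in the interior. The crucial observation that defuses this is recognizing $\rho/c_0$ as a M\"obius function of $\gamma$, which makes $\tfrac{d}{d\gamma}\log(\rho/c_0)$ collapse to $\kappa/[\gamma(\gamma+\kappa)]$ and hence transparently decreasing; without it the verification of monotonicity is where the calculation could stall.
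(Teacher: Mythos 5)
Your proof is correct and follows essentially the same route as the paper's: differentiate $\log E$ term by term, lower-bound the $\delta$-contribution uniformly by $\delta(\sqrt{\mu_1}+\sqrt{\mu_2})^{-2}$, and bound the two positive contributions by their values at the endpoint $\gamma=\mu_2$ using that both are decreasing in $\gamma$. The only differences are cosmetic: where the paper introduces the auxiliary function $\xi(\gamma)$ and bounds it by $(\sqrt{\mu_1}+\sqrt{\mu_2})^2$ via AM--GM, you obtain the (slightly sharper) bound $\mu_1+\mu_2$ directly from Cauchy--Schwarz, and your observations that $c_0-\rho=(1+\gamma/\mu_1)^{-1}$ and that $\rho/c_0$ is a M\"obius function of $\gamma$ merely streamline the same computations (the paper's formula $2((\mu_1^{-1}+\mu_2^{-1})\gamma^2+2\gamma)^{-1}$ is identical to your $\kappa/[\gamma(\gamma+\kappa)]$).
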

\begin{rem} With  similar calculus computation, 
the error upper bound function $E(\gamma)$ behaves as one ``flat" function  for  small $\gamma$. In particular, 
when $\gamma\le \gamma_*$, (\ref{eq102}) gives
\beqq\label{eq103}
\frac{m}{\gamma}  (\frac{2\mu_1\mu_2}{(\mu_1+\mu_2) \gamma+2\mu_1\mu_2})\xi
=2\mu_1\mu_2\frac{m}{\gamma}  \cdot \frac{ (
(\mu_1+\mu_2)\gamma+2\mu_1\mu_2
)}{
\mu_1(\mu_2+\gamma)^2+
\mu_2(\mu_1+\gamma)^2
}
\ge \frac{m}{\gamma}  \left(\frac{(\mu_1+\mu_2)\gamma+2\mu_1\mu_2}{(\sqrt{\mu_1}+\sqrt{\mu_2})^2}\right).
\eeqq
Hence, with $\gamma$ sufficiently close to $0$, the lower bound in (\ref{eq103}) will  exceed $\delta$ eventually, which 
indicates  the increase of $E(\gamma)$,
$ \frac{d}{d\gamma}\log E(\gamma)>0$ in (\ref{eq102}). 
However, as $\mu_1$ is very close to $0$, the increases of $\log E$  could be very slow,  $O(\log \gamma)$. For instance,  at    $\gamma=\mu_1$, 
\[
\frac{d}{d\log \gamma}\log E(\gamma)=2\mu_1\mu_2 m\frac{\mu_1+3\mu_2}{(\mu_1+\mu_2)^2+4\mu_1\mu_2}\le 
6\mu_1m.\]
\end{rem}

\subsubsection{Higher order functions $\varphi_k$ }

The  phi-functions 
\[
\varphi_0(z)=\exp(z),\; \varphi_k(z)=z^{-1} (\varphi_{k-1}(z)-1/((k-1)!))
\]
are initially  proposed to serve as error bounds for the matrix exponential function,  e.g., Theorem 5.1 in \cite{Saad92}. In applications, one can use any function $\varphi_k$, $k>0$ to compute  $\exp(-B_{1,1}^{-1} h) v$.
Researchers~\cite{AWESOME} observe dissimilar error behaviours,   even though 
   two equivalent phi functions  are computed based on
   Krylov subspace approximations, 
\begin{eqnarray}
&&\varphi_0(-h B_{1,1}^{-1})B_{1,1}v,\\
&&-h\varphi_1(-h B_{1,1}^{-1})v+B_{1,1}v,\label{eq122}
\end{eqnarray}  
 
 Here we focus on the computation framework in (\ref{eq122}).
With small Krylov dimensions, the error   mainly originates from the Krylov approximation error of $h\varphi_1(-h B_{1,1}^{-1})$.
To estimate the error, we can  choose $f$ in (\ref{error1}) to be
\beqq\label{f1_eq}
f(\lambda):=h\varphi_1(
(h/\gamma) (1-\lambda^{-1}))
=h\{(h/\gamma) (1-\lambda^{-1})\}^{-1}  \{\exp((h/\gamma) (1-\lambda^{-1}))-1\}.
\eeqq
For general $k\ge 1$,   choose  \beqq\label{phik_f}
 f(\lambda)= f(g(\mu))=h^k \varphi_k((h/\gamma) (1-\lambda^{-1}))
 =h^k \varphi_k(- h/\mu)
 \eeqq
  in estimating the error of the $\varphi_k$ case,
  \beqq\label{phi2}
\exp(-h B_{1,1}^{-1})u=u+\sum_{j=1}^{k-1}(-h B_{1,1}^{-1})^j u+ (-h)^k \varphi_k(-h B_{1,1}^{-1})\cdot ( B_{1,1}^{-1})^k  u.
\eeqq
 Prop.~\ref{h_bd} shows that $1/k!$ is one  upper bound for each $\varphi_k$ for $k\ge 1$ and thus $f$ has an upper bound $h^k/k!$.
This new upper bound  mainly brings two adjustments to the original $\cap$-shaped  error bound. First, the exponential fast dropping  under  large $h$ disappears, 
since the upper bound for this function $f$ is lifted to an increasing function $h^k/k!$.  Second,   polynomial decaying  under  small $\gamma$  can be obtained, in contrast to the original stagnation in   the $\varphi_0$-case.
  \begin{prop} \label{h_bd}
  Consider integers $k>0$. Let $f$ be given in (\ref{phik_f}).
  Then   with $g(\mu)=(1+\mu^{-1}\gamma)^{-1}$,   $|f(g(\mu))|$ can be bounded by $h^k/(k!)$.
    \end{prop}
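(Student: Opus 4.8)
The plan is to reduce the matrix-level claim to a purely scalar estimate and then settle it with the integral representation of the phi-functions. By (\ref{phik_f}) we have $f(g(\mu))=h^k\varphi_k(-h/\mu)$, so the assertion $|f(g(\mu))|\le h^k/k!$ is equivalent to the uniform bound $|\varphi_k(-h/\mu)|\le 1/k!$. First I would recall the integral representation already exploited in the posterior error analysis,
\[
\varphi_k(z)=\frac{1}{(k-1)!}\int_0^1 e^{(1-\theta)z}\,\theta^{k-1}\,d\theta,\qquad k\ge 1,
\]
and specialize it to $z=-h/\mu$.

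The decisive ingredient is a sign condition on $\Re(z)$. Since $h,\gamma>0$ and the relevant $\mu$ are the points of $\cF_C(B)$ (equivalently, the images under $g^{-1}$ of the contour used in (\ref{error1})), Prop.~\ref{FCB} guarantees $\Re(\mu)>0$. Writing $1/\mu=\bar\mu/|\mu|^2$ gives $\Re(1/\mu)=\Re(\mu)/|\mu|^2>0$, hence $\Re(z)=-h\,\Re(1/\mu)\le 0$. Consequently, for each $\theta\in[0,1]$ the exponent $(1-\theta)$ is nonnegative and $|e^{(1-\theta)z}|=e^{(1-\theta)\Re(z)}\le 1$.

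It then remains only to carry out the elementary estimate inside the integral,
\[
|\varphi_k(-h/\mu)|\le\frac{1}{(k-1)!}\int_0^1 |e^{(1-\theta)z}|\,\theta^{k-1}\,d\theta\le\frac{1}{(k-1)!}\int_0^1\theta^{k-1}\,d\theta=\frac{1}{k!},
\]
from which $|f(g(\mu))|=h^k|\varphi_k(-h/\mu)|\le h^k/k!$ follows at once, with the bound independent of $\mu$ and of $\gamma$ as required for the ensuing discussion of the $\varphi_k$ error curves.

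The only point that genuinely requires care — and the closest thing to an obstacle — is confirming $\Re(\mu)>0$ on the exact set over which the bound is claimed; everything else is the triangle inequality applied termwise in the integral. I would make this explicit by invoking Prop.~\ref{FCB} together with the observation that $g$ is the M\"obius map carrying the open right half plane bijectively onto the open disk $\cD(1/2,1/2)$ (the image of the imaginary axis $\mu=it$ under $g$ satisfies $(\Re g)^2+(\Im g)^2=\Re g$), so that any $\lambda=g(\mu)$ arising in (\ref{error1}) has its preimage $\mu=g^{-1}(\lambda)$ in $\{\Re>0\}$ precisely when $\lambda\in\cD(1/2,1/2)$. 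With the real part pinned down to be nonpositive, the uniform bound $h^k/k!$ is immediate.
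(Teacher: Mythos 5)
Your proof is correct, and it takes a recognizably different route from the paper's. The paper proves the scalar bound $|\varphi_k(-h/\mu)|\le 1/k!$ via Taylor's theorem with the \emph{Lagrange} (mean-value) remainder: it writes $\varphi_k(z)=\exp(\xi)/k!$ for some $\xi$ between $0$ and $z$, explicitly under the hypothesis ``$z<0$'', i.e.\ treating $z=-h/\mu$ as a negative real number. You instead use the \emph{integral} form of the remainder, $\varphi_k(z)=\frac{1}{(k-1)!}\int_0^1 e^{(1-\theta)z}\theta^{k-1}\,d\theta$ (the same representation the paper already invokes in its posterior error analysis), and bound the integrand by $|e^{(1-\theta)z}|\le 1$ whenever $\Re(z)\le 0$. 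The payoff of your version is that it is valid for complex $\mu$ with $\Re(\mu)>0$, which is the situation that actually arises when $\mu$ ranges over $\cF_C(B)$ or over the $g^{-1}$-preimage of the contour $\Gamma$ in the error bound; the Lagrange form used in the paper does not literally extend to complex arguments, so your argument closes a small rigor gap rather than merely restating the paper's. Your extra care in pinning down $\Re(\mu)>0$ via Prop.~\ref{FCB} and the mapping property of $g$ onto $\cD(1/2,1/2)$ is exactly the point the paper leaves implicit.
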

  \begin{proof}  
 Let    $\lambda=g(\mu)$. Claim: for each positive integer $k$, we have
 \[
 |\varphi_k (-h\mu^{-1})|\le {(k!)}^{-1}.
 \]
 By Taylor's expansion Theorem.  
 if $z<0$, then with $\xi $ between $0$ and $z$,
 \[
 \varphi_k(z)=z^{-1} \left(\exp(z)-1-\sum_{j=1}^{k-1}\frac{z^j}{j!}\right )= \frac{\exp(\xi) z^k/k!}{z^k}=\frac{\exp(\xi)}{k!}.
 \]
 Since  $\xi\in [-h\mu^{-1},0 ]$, then   \beqq\label{phi_1}  |f(g(\mu))|=
|  h^k\varphi_k(h\mu^{-1})|\le( k!)^{-1} \max_{\xi} |\exp(\xi)|={(k!)}^{-1} h^k.
  \eeqq

  \end{proof}

\begin{prop}
Consider $h$ in proportional to $\gamma$,  $\delta=h/\gamma$. 
Error bounds  corresponding to  the $\varphi_k$ case can be described  by
\beqq\label{E_phi_k}
 E(\gamma):=h^k ( \frac{\rho}{c_0})^m\frac{1}{c_0-\rho}. \eeqq
Then 
\[ \frac{d\log E}{d\log \gamma}\ge k+1,\; \forall \gamma>0.\] 
\end{prop}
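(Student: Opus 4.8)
The plan is to take logarithms, differentiate with respect to $\log\gamma$, and reduce the claim to a single elementary rational inequality that collapses to $\mu_2\ge\mu_1$. Writing
\[
\log E=k\log h+m\log(\rho/c_0)-\log(c_0-\rho),
\]
I would first use that $h=\delta\gamma$ with $\delta$ fixed, so that $\log h=\log\delta+\log\gamma$ and hence the $h^k$ factor contributes \emph{exactly} the constant $k$ to $\frac{d\log E}{d\log\gamma}$. Consequently the assertion $\frac{d\log E}{d\log\gamma}\ge k+1$ is equivalent to
\[
m\,\frac{d\log(\rho/c_0)}{d\log\gamma}-\frac{d\log(c_0-\rho)}{d\log\gamma}\ge 1\qquad(\gamma>0).
\]

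Next I would exploit the simple closed forms that follow from (\ref{rho_fun}), namely $c_0-\rho=\mu_1/(\mu_1+\gamma)$ and $c_0+\rho=\mu_2/(\mu_2+\gamma)$, together with the ratio $\rho/c_0=(\mu_2-\mu_1)\gamma/\bigl(2\mu_1\mu_2+(\mu_1+\mu_2)\gamma\bigr)$ already computed in Prop.~\ref{num}. A direct calculation (using $\frac{d}{d\log\gamma}=\gamma\frac{d}{d\gamma}$) then yields the two rational logarithmic derivatives
\[
\frac{d\log(\rho/c_0)}{d\log\gamma}=\frac{2\mu_1\mu_2}{2\mu_1\mu_2+(\mu_1+\mu_2)\gamma},\qquad
-\frac{d\log(c_0-\rho)}{d\log\gamma}=\frac{\gamma}{\mu_1+\gamma}.
\]
Substituting these reduces the whole statement to verifying the inequality
\[
m\,\frac{2\mu_1\mu_2}{2\mu_1\mu_2+(\mu_1+\mu_2)\gamma}+\frac{\gamma}{\mu_1+\gamma}\ge 1\qquad(\gamma>0).
\]

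To finish, I would note that the Krylov dimension satisfies $m\ge 1$ and the first term is positive and increasing in $m$, so it suffices to treat $m=1$. Rewriting the target as $\frac{2\mu_1\mu_2}{2\mu_1\mu_2+(\mu_1+\mu_2)\gamma}\ge 1-\frac{\gamma}{\mu_1+\gamma}=\frac{\mu_1}{\mu_1+\gamma}$, clearing denominators, and cancelling the common positive factor $\mu_1\gamma$ leaves $2\mu_2\ge\mu_1+\mu_2$, i.e. $\mu_2\ge\mu_1$. This holds because $\mu_1=c_1-\rho_1\le c_1+\rho_1=\mu_2$ by Prop.~\ref{FCB}. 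I do not expect a genuine obstacle: once the closed forms for $c_0\pm\rho$ are used, everything is elementary calculus and algebra. The only points requiring care are the bookkeeping of the $h=\delta\gamma$ dependence (so that $h^k$ supplies precisely the constant $k$ rather than a $\gamma$-dependent term) and the observation that the $\rho/c_0$ contribution is monotone in $m$, so the worst case is $m=1$ with the bound approaching equality only in the limit $\gamma\to\infty$.
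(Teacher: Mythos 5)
Your proposal is correct and follows essentially the same route as the paper: take logarithms, differentiate, and use the closed forms $\rho/c_0=(\mu_2-\mu_1)\gamma/(2\mu_1\mu_2+(\mu_1+\mu_2)\gamma)$ and $c_0-\rho=\mu_1/(\mu_1+\gamma)$, which give exactly the three terms $k+2m\bigl((\mu_1^{-1}+\mu_2^{-1})\gamma+2\bigr)^{-1}+\gamma/(\mu_1+\gamma)$ appearing in the paper's computation. If anything, you are more complete at the last step: the paper simply asserts that this sum exceeds $k+1$, whereas you reduce the needed inequality $\frac{2m\mu_1\mu_2}{2\mu_1\mu_2+(\mu_1+\mu_2)\gamma}\ge\frac{\mu_1}{\mu_1+\gamma}$ to $\mu_2\ge\mu_1$ via the worst case $m=1$.
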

\begin{proof} From (\ref{E_phi_k}), we have
\[
\log E=k\log (\delta \gamma)+m\log \frac{\rho}{c_0}-\log(c_0-\rho). \]
To explore the dependence on $\gamma$, taking derivative with respect to $\gamma$ yields
\begin{eqnarray}
&& \frac{d}{d\gamma}\log E(\gamma)=\frac{d}{d\gamma} \{ k\log \gamma+m\log \frac{\rho}{c_0}-\log (c-\rho)\}\\
&=&\frac{k}{\gamma}
+2m \left((\frac{1}{\mu_1}+\frac{1}{\mu_2}) \gamma^2+2\gamma\right)^{-1}+(\mu_1+\gamma)^{-1}.
\label{eq97}
\end{eqnarray}
Hence, for all $\gamma>0$, we have
\[
\frac{d\log E}{d\log \gamma}=k+2m(
(\frac{1}{\mu_1}+\frac{1}{\mu_2}) \gamma+2
)^{-1} +1-\frac{\mu_1}{\mu_1+\gamma}>k+1.
\]
\end{proof}

\section{Simulations}\label{simulations}

\label{sec_problem}
Previous work in~\cite{chen2018transient} and~\cite{AWESOME} is recalled to illustrate the stability issue in solving semi-explicit DAEs by the ordinary Arnoldi method. 

\subsection{Stability Problems of DAEs}
\label{sec:sing}
We start from a one tank lumped RLC model as shown in Fig.~\ref{Fig_RLC_1tank}. A step input current source $I_S$ with rise time TR$=1ps$ is applied. The DAEs $C\dot{ x}+Gx=u$
of the one tank RLC follow the semi-explicit structure as expressed in Eq.~(\ref{Eq_RLC_1tank}). The node voltages and branch currents in the state vector are marked in Fig.~\ref{Fig_RLC_1tank}.

\begin{table}[hbt]
\begin{eqnarray}
\label{Eq_RLC_1tank}
\begin{pmatrix}
0 & & & \\
 & 0 & & \\
 & & C1 & \\
 & & & L1 
\end{pmatrix} 
\begin{pmatrix}
\dot{v_1} \\ \dot{v_2} \\ \dot{v_3} \\ \dot{i_L} 
\end{pmatrix}
+	
 \begin{pmatrix}
\frac{1}{R_1} + \frac{1}{R_2} & -\frac{1}{R_1} & & \\
 -\frac{1}{R_1} & \frac{1}{R_1} & & 1\\
 & & 0 & -1\\
 & -1 & 1 & 0
\end{pmatrix} \begin{pmatrix}
v_1 \\ v_2 \\ v_3 \\ i_L 
\end{pmatrix} = 
\begin{pmatrix}
 I_{bias}\\
0 \\
-I_S \\
0
\end{pmatrix} 
\end{eqnarray}
\caption{}
\vspace{-0.5cm}
\end{table}

First, rational Krylov subspace is constructed through Arnoldi iterations in ~(\ref{Ord_Arnoldi}) in the simulation to compute the matrix exponential with lower order $\varphi_0$ functions, i.e., ~(\ref{solap}). We set $h=TR$ for the input transition and use fixed step size for the stable stage. Since $C$ is singular, we do observe the failure of the application of the ordinary Arnoldi algorithm.  Fig.~\ref{Fig_RLC_1tank_res} depicts the node voltages and the solution residual in the simulation, showing that the residual terms on algebraic variables $v_1$ and $v_2$ start to increase at early stage and generally drive the whole system to an incorrect converging direction. 
Trapezoidal  method results with fixed step size $100ps$ are plotted as comparison which show a deviation from exact solution as well. 
\begin{figure}[t]
	\centering
	\includegraphics[trim = 0cm 0cm 0cm 0cm, clip=true, keepaspectratio, width = 0.5\textwidth
	]
	{./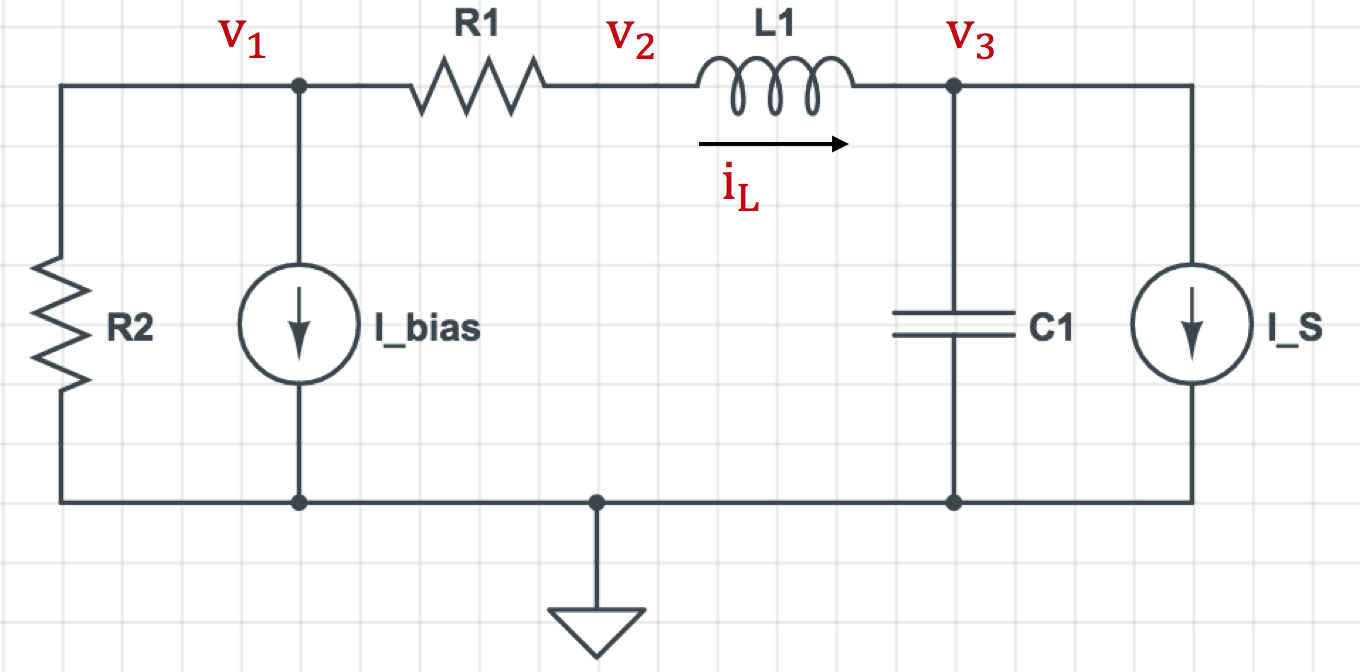}
	\caption{ 
{\small{One tank RLC with $R1 = 100\mu\Omega, L1 = 0.5nH, C1 = 0.5nF$ and $R2 << R1$.}}
	}
	\label{Fig_RLC_1tank}
\end{figure}

\begin{figure}[t]
	\centering
	\includegraphics[trim = 0.7cm 0.1cm 0.4cm 0.7cm, keepaspectratio, width = 0.35\textwidth
	]
	{./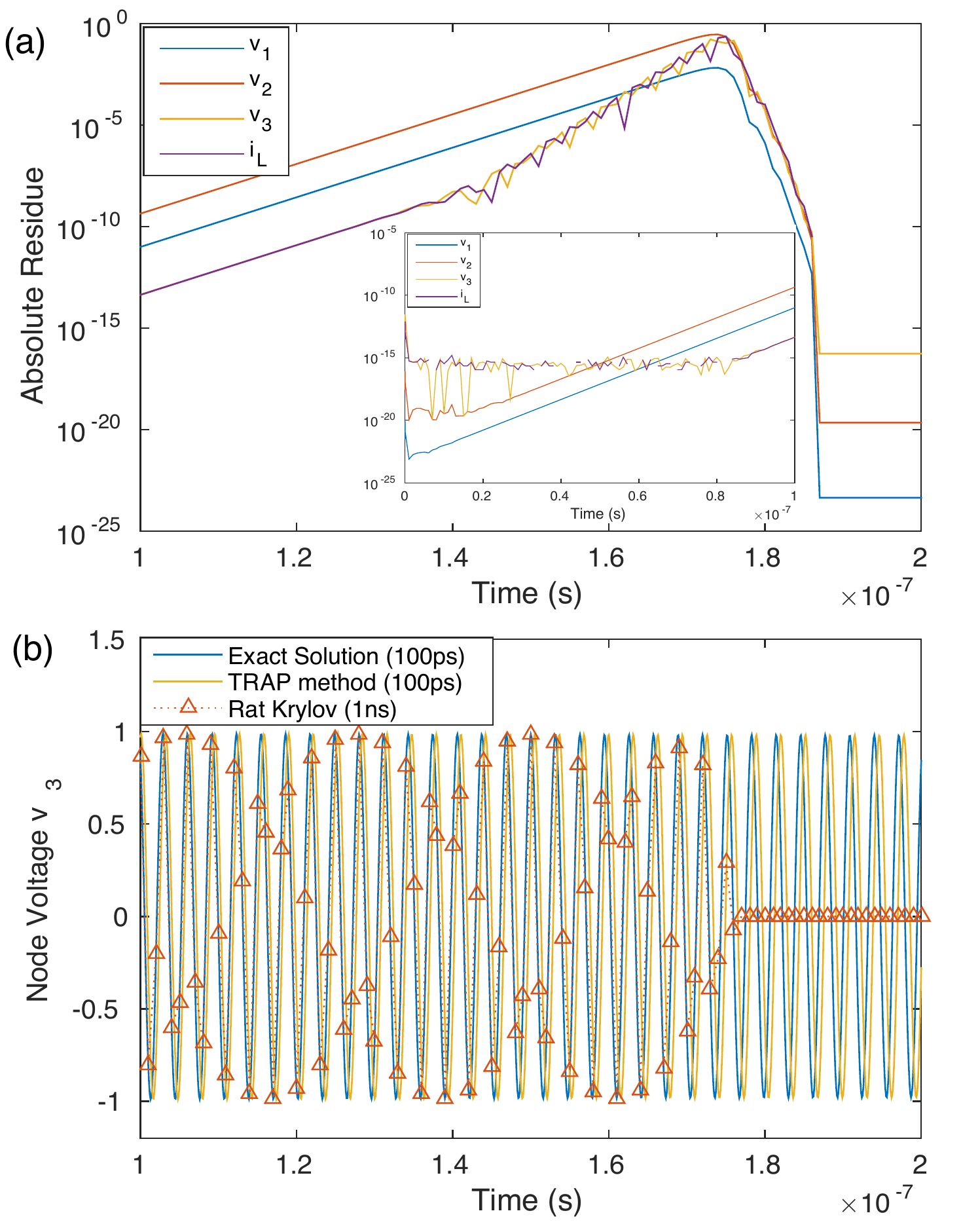}
	\caption{ 
{\small{Simulation results of the one tank RLC (Fig.~\ref{Fig_RLC_1tank}). (a) absolute value of $residual = C\dot{x}(t) + Gx(t) - u(t)$ for each variable in $x(t)$; (b) simulation results on $v_3$ with rational Krylov subspace method as well as Trapezoidal method, exact solution is included as comparison.}}
	}
	\label{Fig_RLC_1tank_res}
	\vspace{-0.5cm}
\end{figure}
\label{sec:sens}

From the observations on ill-conditioned system from DAEs, the numerical error occurs in the calculation of algebraic variables and could result in stability issues in later simulation stage. To eliminate the error in the nullspace $\mathcal{N}({G}^{-1}{C}) = \mathcal{N}({C})$, the algebraic variables are set to zero in the Arnoldi process. The technique was called implicit regularization~\cite{chen2018transient}. 
\begin{eqnarray}
v= \begin{pmatrix} v_R \\ v_N \end{pmatrix} \Rightarrow P_C v = \begin{pmatrix} I  & 0\\  0 &0 \end{pmatrix} \begin{pmatrix} v_R \\ v_N \end{pmatrix} = \begin{pmatrix} v_R \\ 0 \end{pmatrix}.
\end{eqnarray}

\noindent Since $C$ is diagonal,  the matrix  { $P_C$} only contains an identity matrix for the differential variables and zeros for the algebraic variables. The approach forces the computations in the range of $C$. 

Simulation results of one tank RLC with implicit regularization are shown in Fig. \ref{Fig_Results_RLC_1tank}, which fit the exact solution. Residuals of $v_3$ and $i_L$ remain at a low level ($\approx 10^{-15}$) when the input current is stable. The other variable could be solved  algebraically and the system no longer suffers from the singularity problem. More discussions on stability can be found in (\cite{AWESOME}).

\begin{figure}[t]
	\centering
	\includegraphics[trim = 0.7cm 0.1cm 0.4cm 0.7cm, keepaspectratio, width = 0.35\textwidth
	]
	{./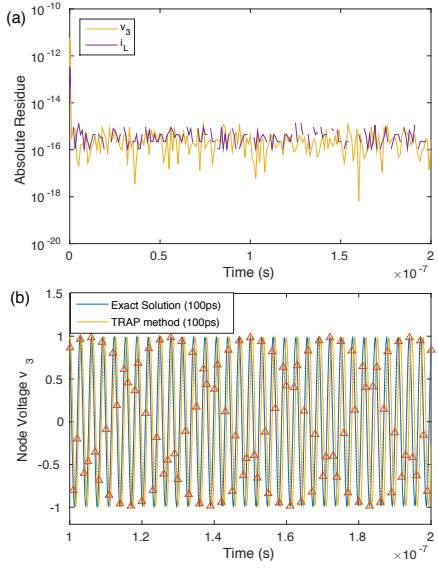}
	\caption{ { \small{Simulation results of the one tank RLC (Fig.~\ref{Fig_RLC_1tank}) with implicit regularization. (a) The absolute residual no longer increase and (b) simulation results well fit the exact solution. Node voltages $v_1$, $v_2$ are solved algebraically.}}
	}
	\label{Fig_Results_RLC_1tank}
	\vspace{-0.6cm}
\end{figure}

This simple example illustrates  whether  the numerical range of $B$ is located in the right half plane or not
affects the sensitivity of numerical integration methods.  Indeed, 
 since the matrix $P_C G^{-1} C$ is 
\[
\left(
\begin{array}{cccc}
0  & 0  &0 & 0   \\
0  & 0  &0 &  0 \\
0  & 0  & 5\times 10^{-14} & 5\times 10^{-10}   \\
0  & 0  & -5\times 10^{-10} &0
\end{array}
\right),
\]
 $\cF_C(B)$ is the ellipse  with centre $(2.5\times 10^{-14},0)$ and semi-major axis $5\times 10^{-14}$ and semi-minor axis $5\times 10^{-10}$.
By (\ref{HS}) and Prop.~\ref{Bound1},
the Rayleigh quotient of the matrix $H_m$ always lie in the image of the ellipse under the function $g$. Thus, $\cF_C(H_m)$ lies in the disk $\cD(1/2,1/2)$. 
In contrast, the ordinary Arnoldi iterations generate upper Hessenberg matrix $H_m$, whose numerical range $\cF(H_m)$ 
does not necessarily  lies in $\cD(1/2,1/2)$, since part  of  $\cF(B)$ even lies in the left half plane. 

 \begin{figure}
 \begin{center}
 \includegraphics[width=0.35\textwidth]{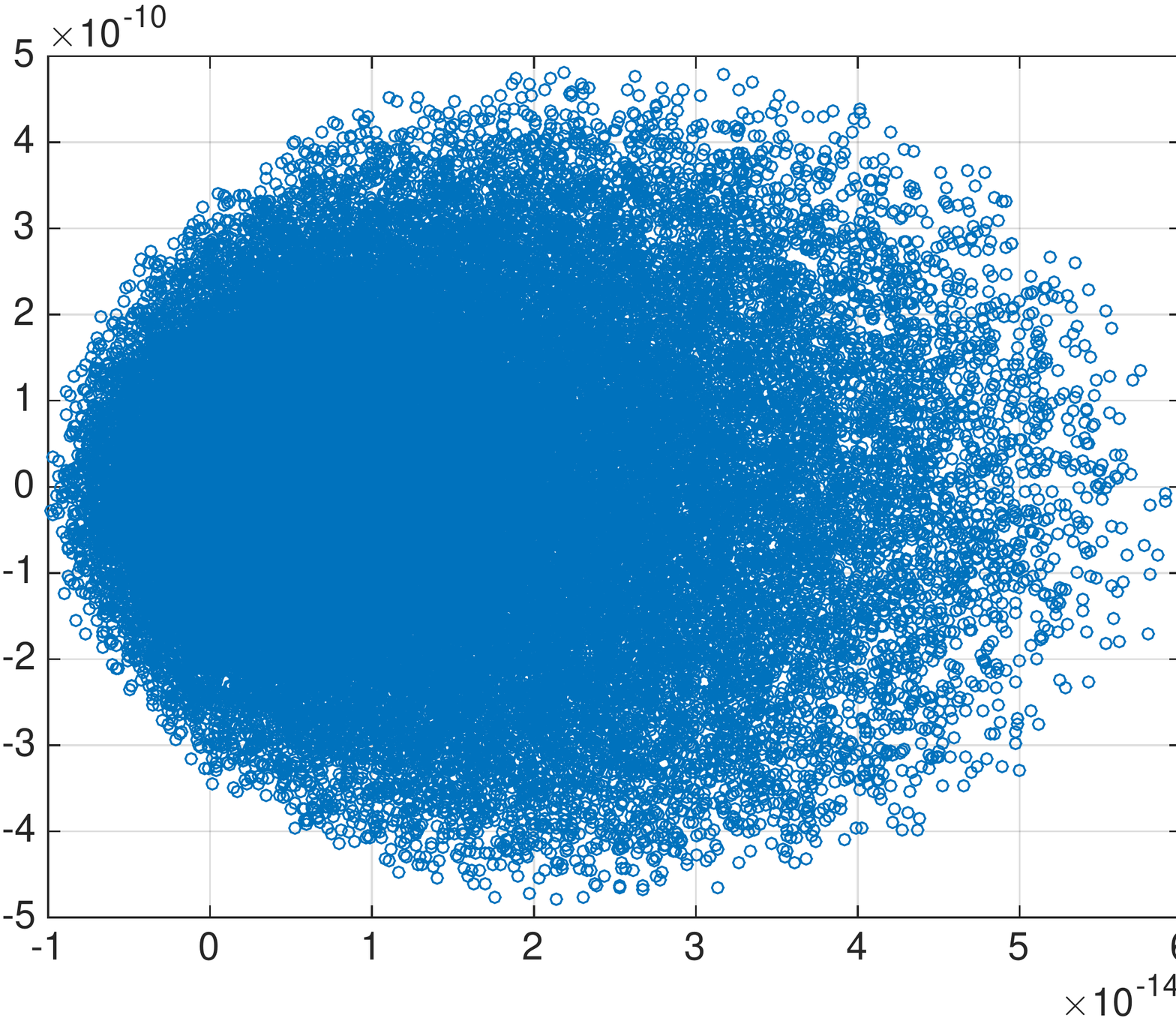} 
 \includegraphics[width=0.35\textwidth]{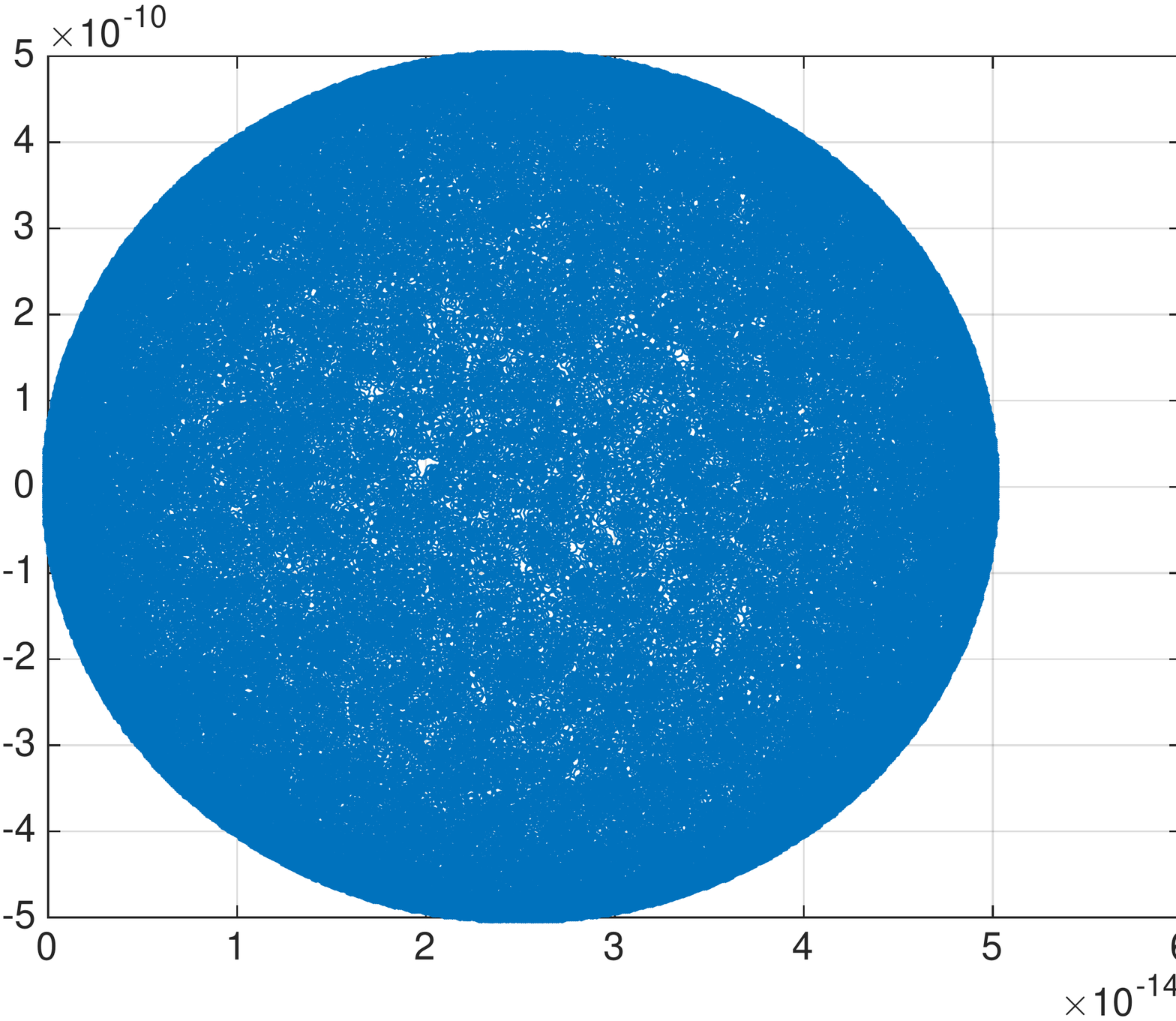} 

   \caption{  Illustration of $\cF(G^{-1}C)$(left) and $\cF_C(G^{-1}C)$(right) under $5\times 10^5$ Rayleigh quotient realizations from $\IC^4$. }
     \label{FB} \end{center}
\end{figure}

\subsection{RLC networks}
To illustrate the performance of the proposed Arnoldi algorithm on the case  with $G$ only positive semi-definite, we use  one PDN, consisting of $260$ resistors, $160$ capacitors and $160$ inductors. 
 The system matrix $C$ is positive semi-define and symmetric ( actually diagonal). The matrix $G$  is positive semi-definite, but not symmetric. The eigenvalues of $ B_{1,1}=V_C^\top  G^{-1} CV_C$ are in the range of $[10^{-17}, 10^{-8}]$. The distribution of the eigenvalues is plotted in Fig.~\ref{Fig_RLCmesh_Eigens}. 
 The transient response of the RLC mesh circuit is calculated with a single step integration. 
 Assume the slope of input current source is unchanged within the current step. 
 Starting from zero initial state $x(0)$, the response $x(h)$ of circuit at time $h$ is derived.
 The exact solution is computed by directly solving differential equations and algebraic equations in (\ref{eq1},\ref{eq2}).

The shift parameter $\gamma$ is set as $h/2$ empirically.
 The matrix exponentials  in the solution  are evaluated at different time step sizes $h$ with increasing dimension $m$ of Krylov subspace. For simplicity, we consider $x(0)=0=u(0)$ and the solution is given by
 $x(h)=h^2V_C \varphi_2(-h B_{1,1}^{-1})  C_1^{-1} V_C^\top u'(0)$. 
 Since 
\[ \varphi_0(t)v=t^2\varphi_2(t)v+v+tv=t\varphi_1(t)v+v,\] then the matrix exponential $\varphi_2(-h B_{1,1}^{-1}) v$  appeared in  the solution  can be computed with a Krylov subspace approximation of either  $\varphi_0$, $\varphi_1$ or $\varphi_2$ functions. 
Consider the following three approaches to compute
the Krylov subspace approximation.
\begin{enumerate}
\item[(a)] The original Arnoldi method with implicit regularization.
\item[(b)] The original Arnoldi method with implicit regularization + numerical pruning of spurious eigenvalues.
\item[(c)]  The Arnoldi method with structured orthognality + numerical pruning of spurious eigenvalues.
\end{enumerate}
Left column to right column in Fig.~\ref{Fig_RC_error_orig} includes the distribution of absolute error after  applying approach (a), (b) and (c), respectively. 
Here the  absolute errors are focused on matrix exponentials, thus subfigures from the top row to the bottom top  shows the  absolute errors of the following
   matrix exponentials.
\begin{enumerate}
\item[(i)] $\varphi_0$ function: $ V_C^\top \varphi_0(h B_{1,1}^{-1}) V_C G^{-1} V_C^\top C_1 V_C G^{-1} u'(0)$.
\item[(ii)] $\varphi_1$ function: $h V_C^\top \varphi_1(h B_{1,1}^{-1}) V_C G^{-1} u'(0)$.
\item[(iii)] $\varphi_2$ function: $h^2 V_C^\top \varphi_1(h B_{1,1}^{-1}) C_1^{-1} V_C G^{-1} u'(0)$.
\end{enumerate}

  Experiments in Fig.~\ref{Fig_RC_error_orig}
 show that 
  the upper Hessenberg matrix can consist of many spurious eigenvalues. From (\ref{HS}) and (\ref{rho_fun}), $\cF_C(S)\subseteq \cD(1/2,1/2)$ and thus  $\cF(H_m)\subseteq \cD(1/2,1/2)$.
  The region with spurious eigenvalues is plotted in red color. When  the original Arnoldi iterations are used, the upper Hessenberg matrix  could lose the positive definite property and
the absolute  error could grow extremely high. Clearly, the issue is resolved with (iii) see the right column. Notice that for $\gamma$  close to 0,  the set $\cF(H_m)$ is  very close to $1$ from(\ref{rho_fun}), and  
  rounding errors could easily contaminate the computations of $H_m$, such that $\cF(H_m)$  fails to lie in $\cD(1/2,1/2)$.   Hence, proper numerical pruning is required.  Observe that the error reduces quickly with all $\varphi$ functions by increasing the dimension of rational Krylov subspace, which is consistent with the theorem~\ref{Thm3}. When $h$ is larger than $\mu_2$(the upper bound for real components of eigenvalues of $B_{1,1}$),  the calculation with the $\phi_0$ function   gives the best  accuracy. On the other hand, if $h$ is smaller than the spectrum, the errors (in the log-scale)  with $\varphi_1$ and $\varphi_2$  exhibit a decrease proportional to $\gamma$ in the log scale, which alleviates the error stagnation in the solution with the  $\varphi_0$ function.

\begin{figure}[hbt]
	\centering
	\includegraphics[trim = 0.1cm 0.1cm 0.1cm 0.3cm, clip=true, keepaspectratio, width = 0.38\textwidth
	]{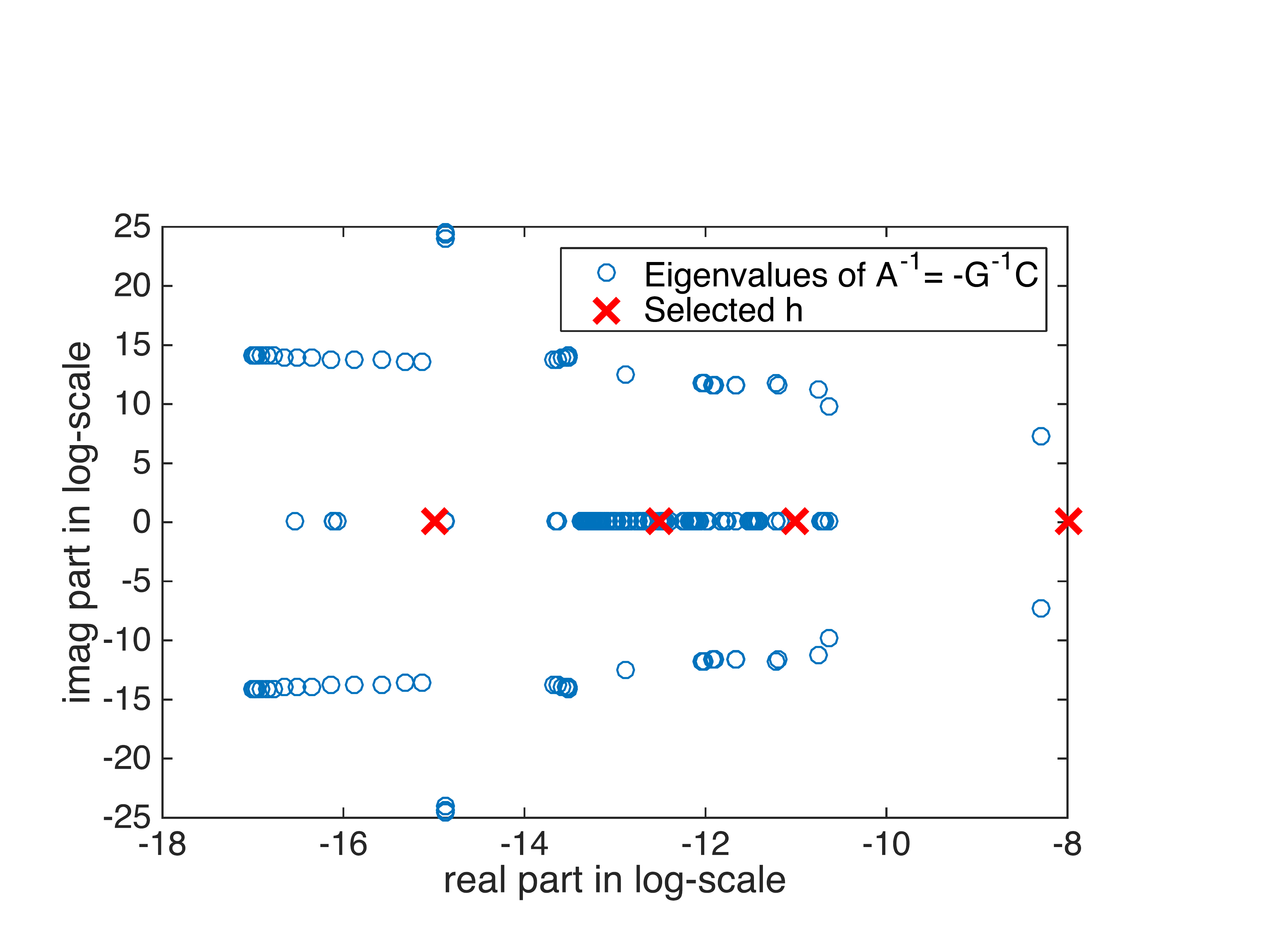}
	\caption{RLC network: eigenvalues of $B = G^{-1}C$ in log-scale.}
	\label{Fig_RLCmesh_Eigens}
	\vspace{-0.2cm}
\end{figure}

\begin{figure}[hbt]
	\centering
	\includegraphics[trim = 0.1cm 0.1cm 0.1cm 0.4cm, clip=true, keepaspectratio, width = 0.3\textwidth
	]
	{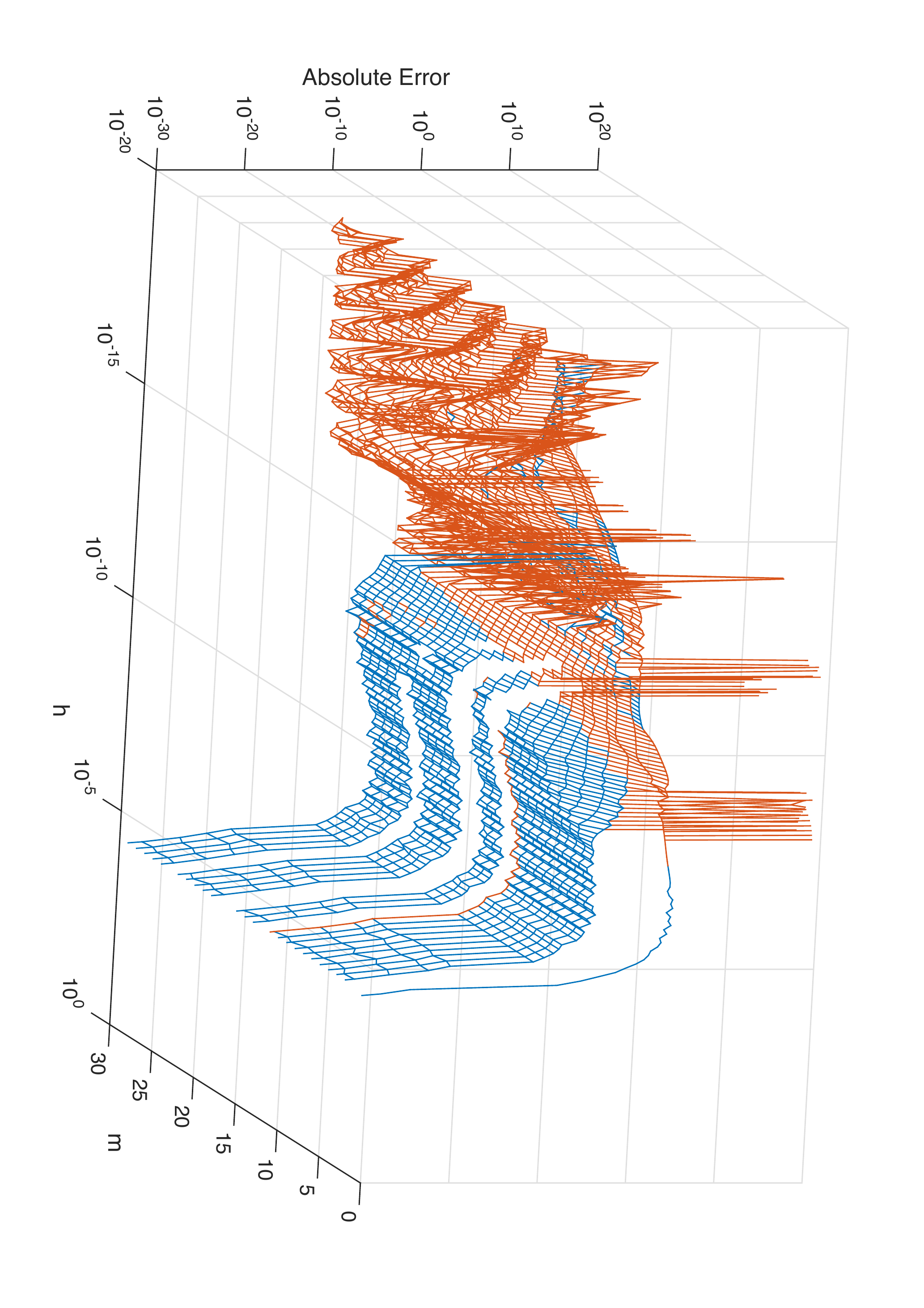}
		\includegraphics[trim = 0.1cm 0.1cm 0.1cm 0.4cm, clip=true, keepaspectratio, width = 0.3\textwidth
	]
	{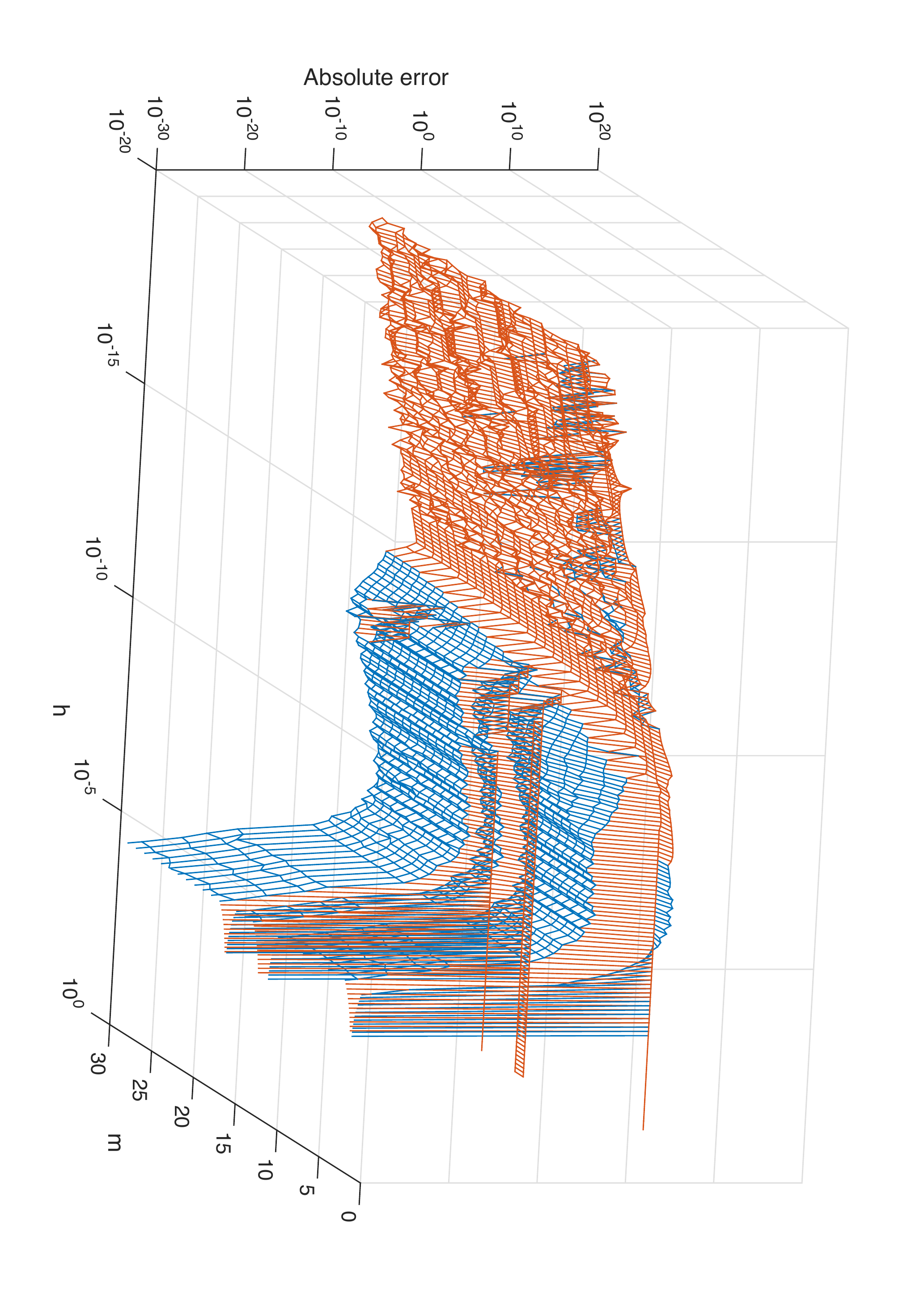}
	\includegraphics[trim = 0.1cm 0.1cm 0.1cm 0.4cm, clip=true, keepaspectratio, width = 0.3\textwidth
	]
        {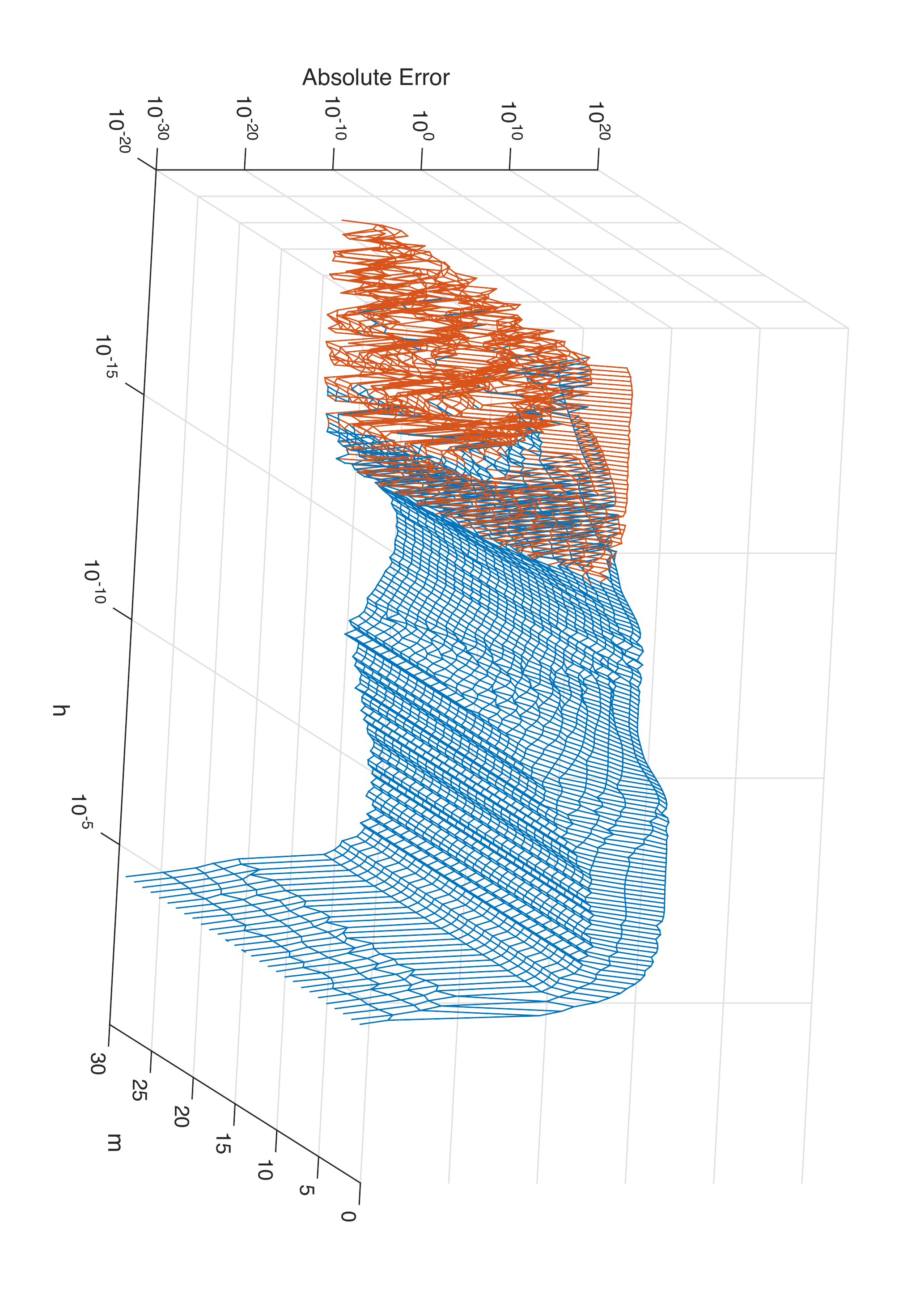}\\	
	\includegraphics[trim = 0.1cm 0.1cm 0.1cm 0.4cm, clip=true, keepaspectratio, width = 0.3\textwidth
	]
		{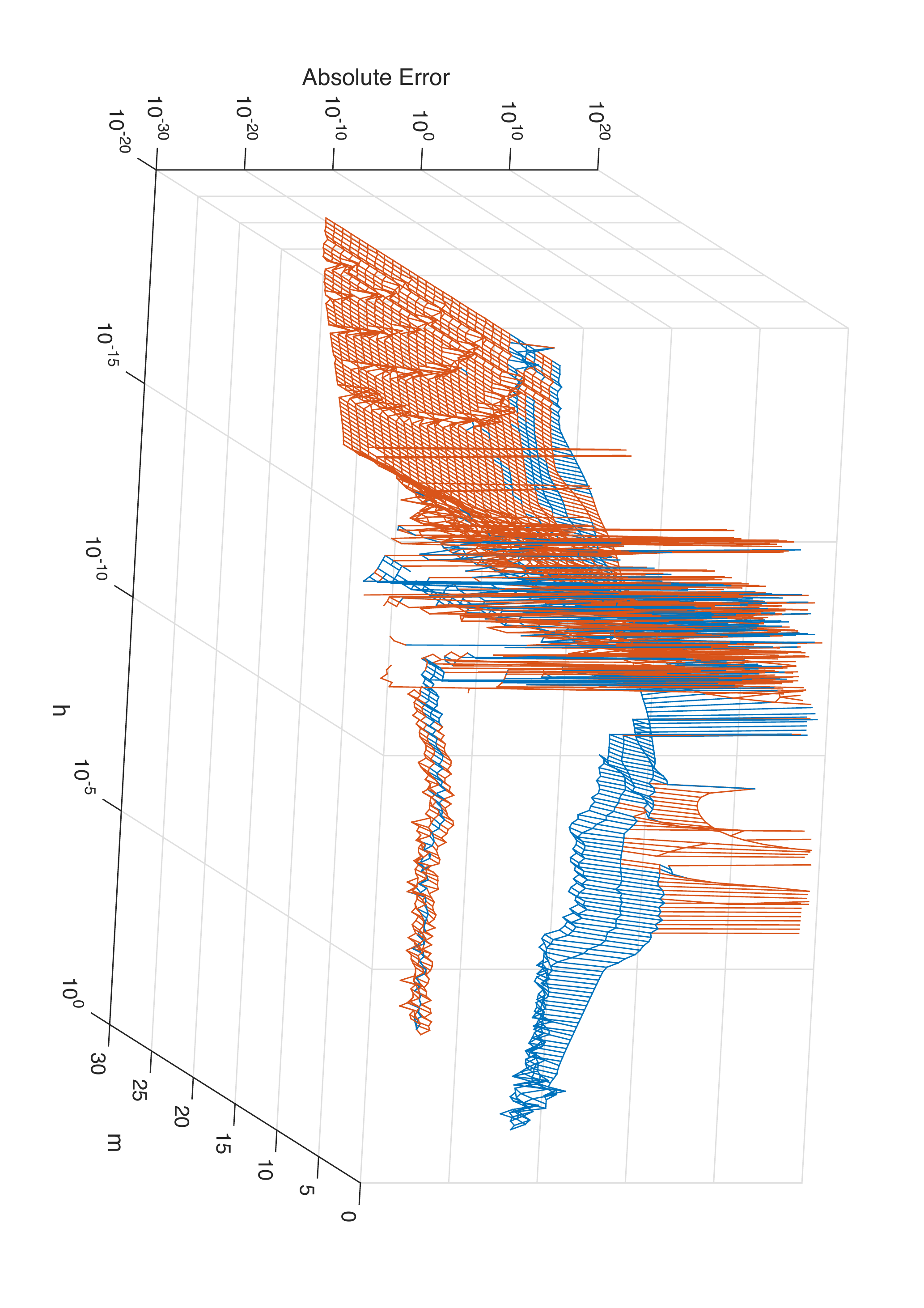}
		\includegraphics[trim = 0.1cm 0.1cm 0.1cm 0.4cm, clip=true, keepaspectratio, width = 0.3\textwidth
	]
	{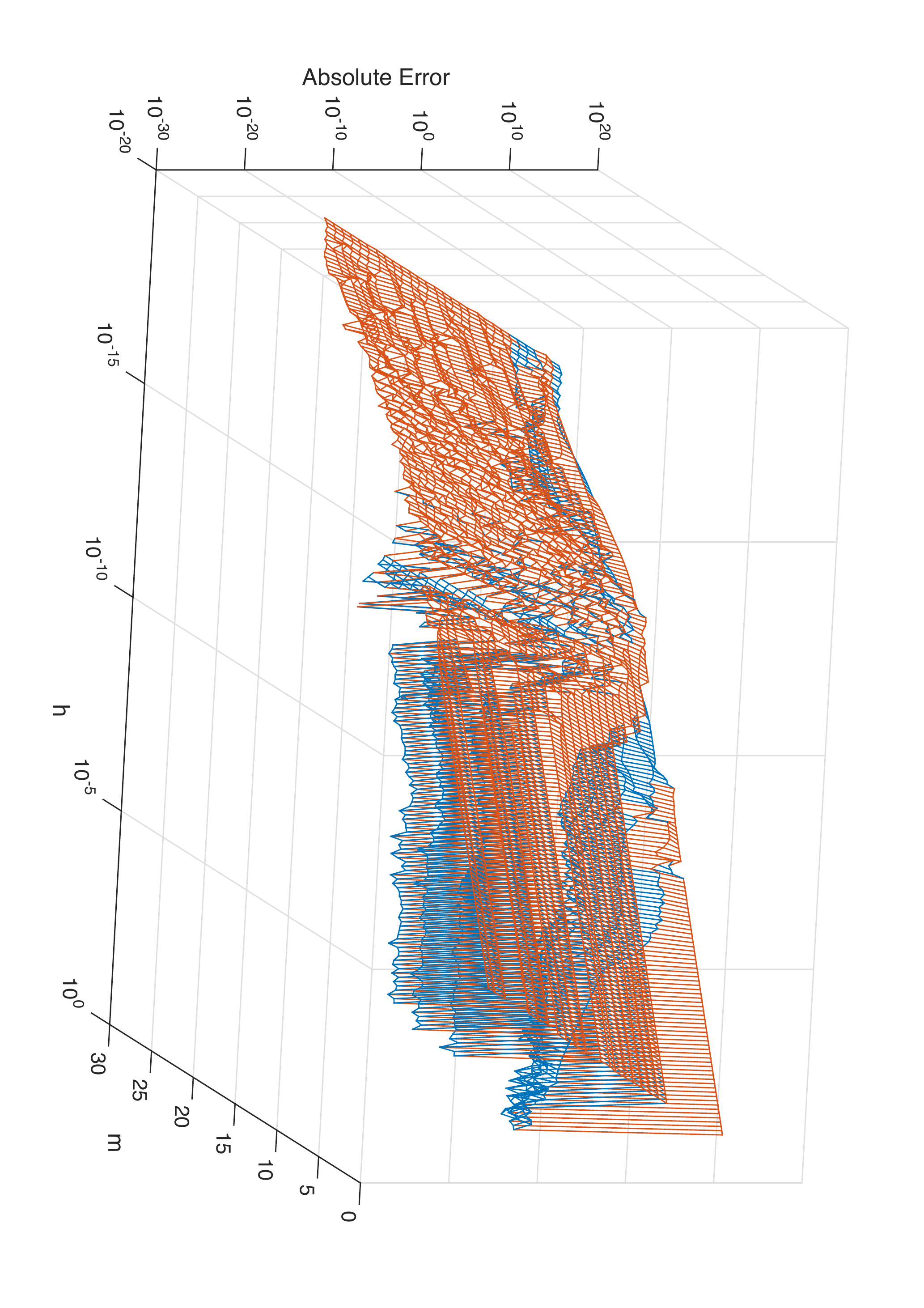}
		\includegraphics[trim = 0.1cm 0.1cm 0.1cm 0.4cm, clip=true, keepaspectratio, width = 0.3\textwidth
	]
	{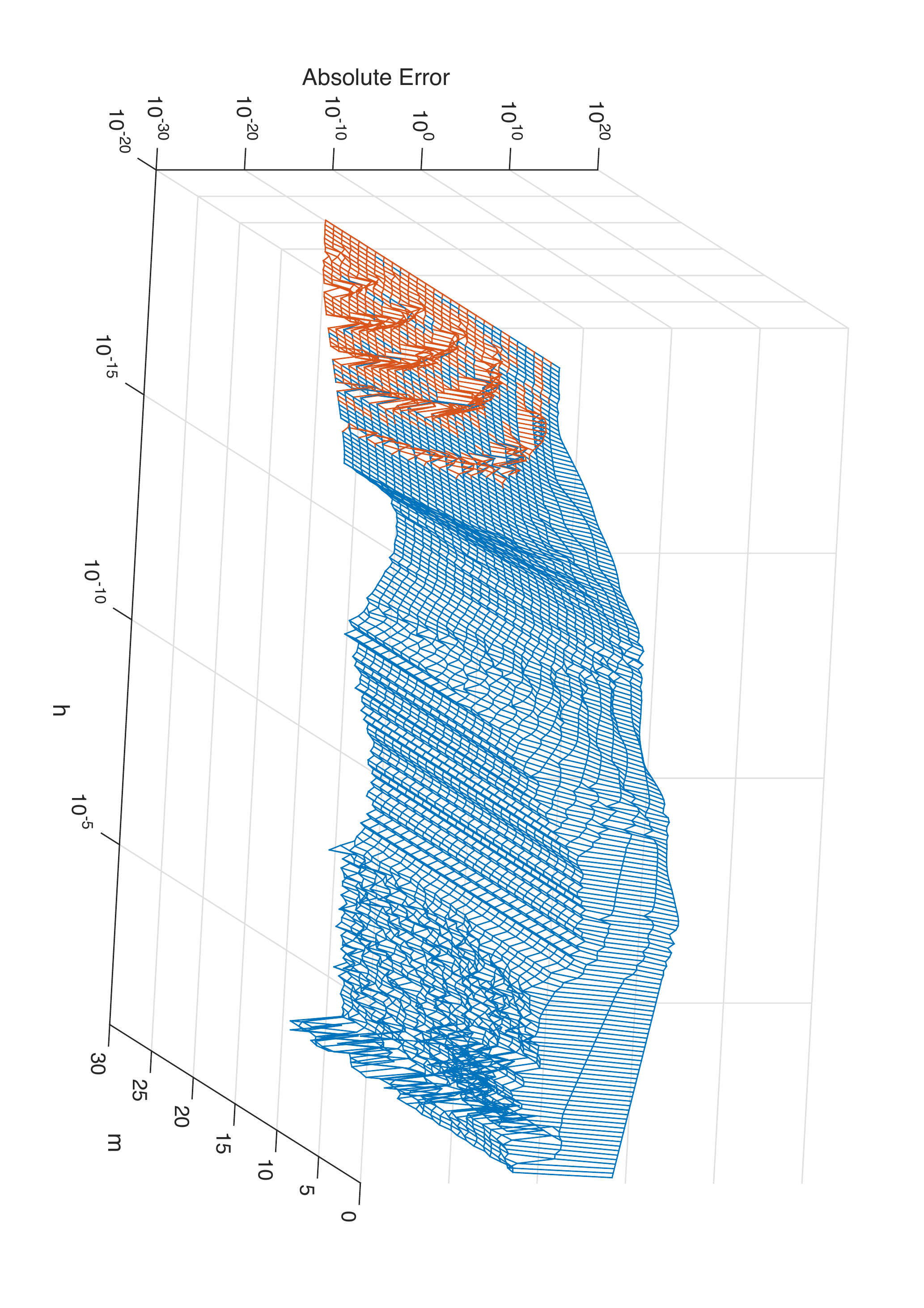}
	\\
	\includegraphics[trim = 0.1cm 0.1cm 0.1cm 0.4cm, clip=true, keepaspectratio, width = 0.3\textwidth
	]
	{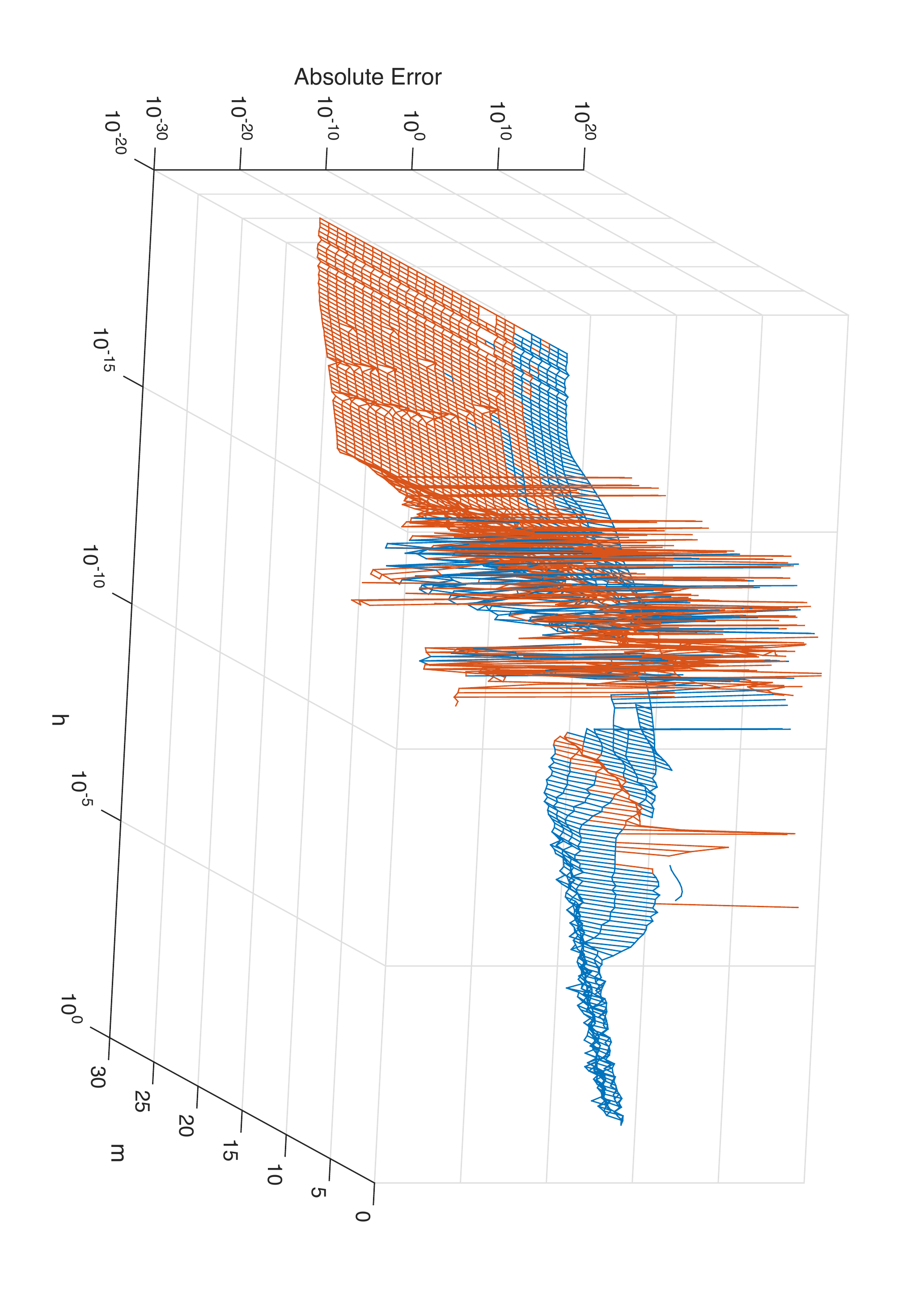}
		\includegraphics[trim = 0.1cm 0.1cm 0.1cm 0.4cm, clip=true, keepaspectratio, width = 0.3\textwidth
	]
	{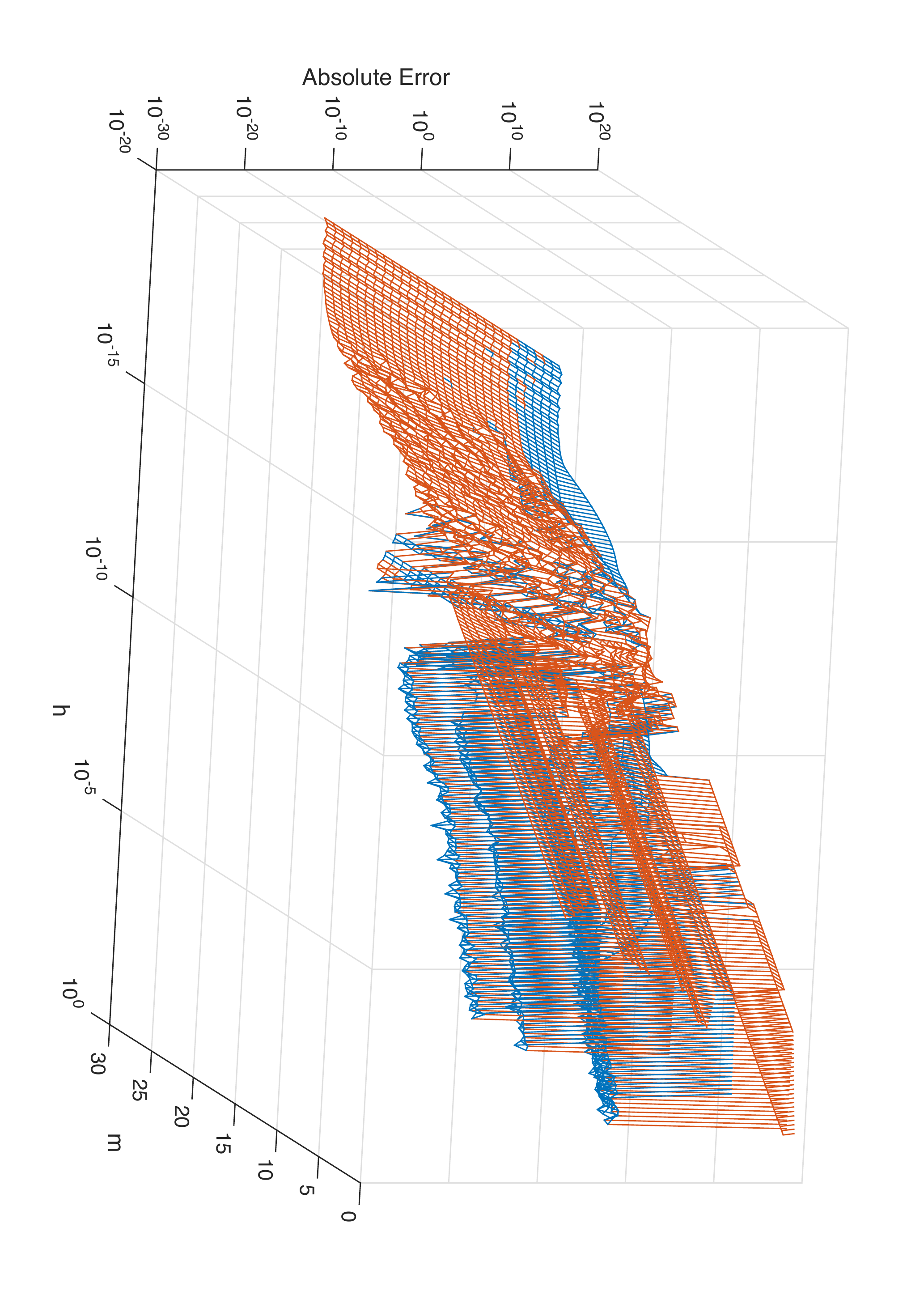}	
	\includegraphics[trim = 0.1cm 0.1cm 0.1cm 0.4cm, clip=true, keepaspectratio, width = 0.3\textwidth
	]
	{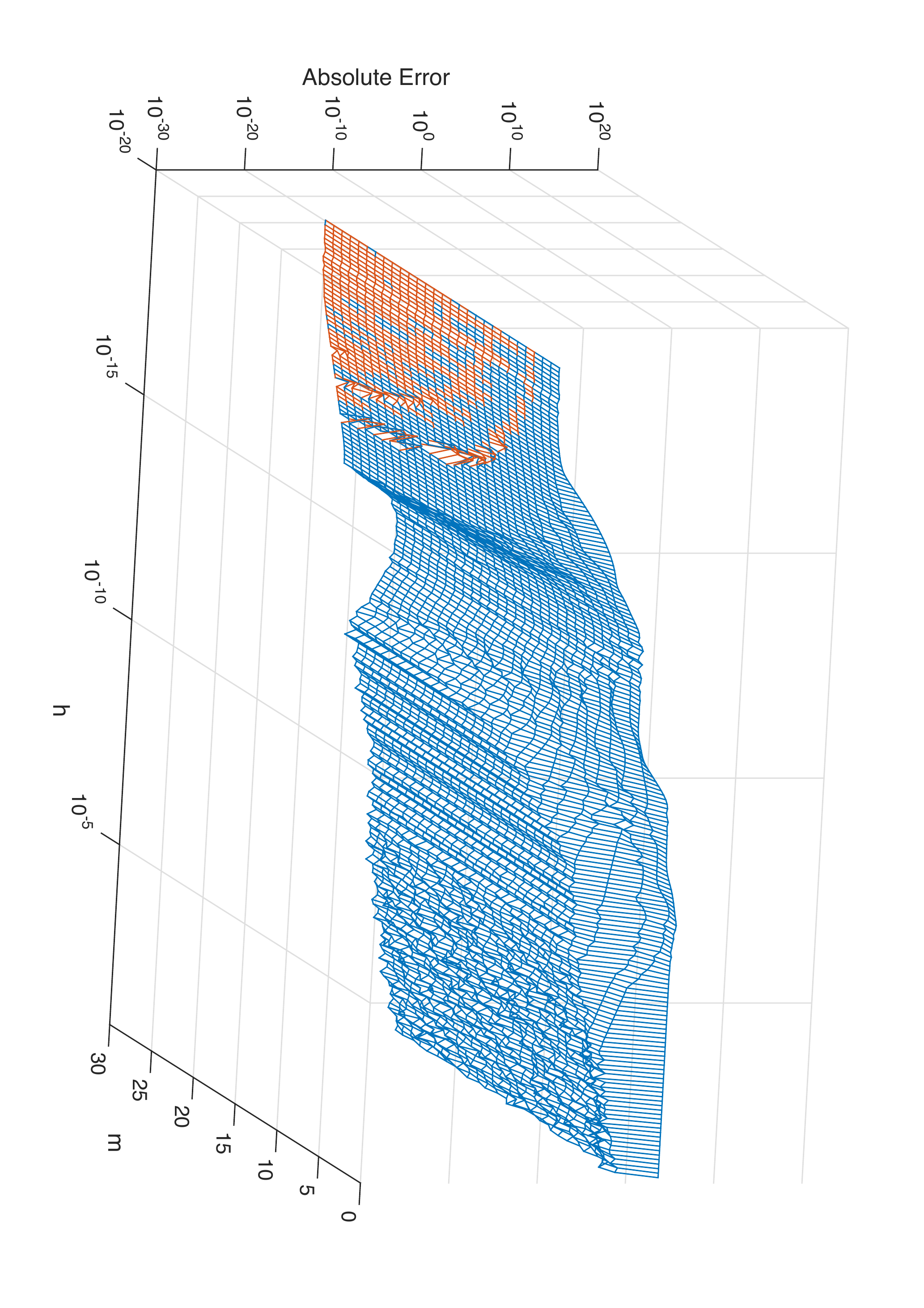}
	\caption{RLC network with $N=507$: Left to right columns show
	the absolute error versus $h$ and $m$ with (a) original Arnoldi process, (b) original Arnoldi process+numerical pruning  and (c) Arnoldi process with explicit structured orthogonalization  +numerical pruning.
	}
	\label{Fig_RC_error_orig}
	\vspace{-0.35cm}
\end{figure}

\appendix
\section{Proofs}
\subsection{Proof of Theorem~\ref{thm4}}

 Introduce  the operator
 \[ \Delta_m:=  (\lambda I-S )^{-1} - W_m (\lambda I-H_m)^{-1} W_m^\top  C .\]  Then the difference between (\ref{eq69}) and (\ref{eq70}) can be bounded by the operator on $v$,
 \begin{eqnarray}
&& \{  f(S_{1,1})  v - W_m  f(H_m) W_m^\top  C V_C v \}\\
&=&\frac{1}{2\pi i}\int_\Gamma f(\lambda) \  \{(\lambda I-S  )^{-1} - W_m (\lambda I-H_m)^{-1} W_m^\top C \}  wd\lambda\\
&=&\frac{1}{2\pi i}\int_\Gamma f(\lambda) \, \Delta_m w d\lambda.\label{eq77}
\end{eqnarray}
By computations, 
\begin{eqnarray}
&&\Delta_m (\lambda I-  S  )  W_m= \{W_m- W_m  (\lambda I-H_m)^{-1}  W_m^\top  C
 (\lambda I- S  )   W_m\}\\
&=& \{ W_m- W_m  (\lambda I-H_m)^{-1}   (\lambda I- H_m ) \}=0,
\end{eqnarray}
and thus
 \beqq \label{eq79}
\Delta_m (w-(\lambda I-S )  W_m y)=\Delta_m w\eeqq
holds  for any vector $y\in \IC^m$.
Note that  columns of $W_m$ lie in the  subspace consisting of vectors \[ \{  S^{k} w: k=0,\ldots, m-1\}. \]   Hence, for each $y\in \IC^m$,
$w-(\lambda I-S)   W_m y$ can be expressed as   $p_m(S; \lambda ) w$ for some polynomial  $p_m(z; \lambda)$ of $z$ with degree $m$. 
Note that  $p_m(\lambda, \lambda)=1$.
 Conversely, for any  (degree $\le m$) polynomial $p_m(z; \lambda)$ with $p_m(\lambda; \lambda)=1$, there exists  some vector $y\in \IC^m$, such that 
\[
w-(\lambda I-S)  W_m y=p_m(S; \lambda).
\] 
All together,  for any $\lambda\in \Gamma$, from (\ref{eq77}) and (\ref{eq79}),
we have
\[
V_C  f(S_{1,1})  v- W_m f(H_m)  W_m^\top C V_C v
=\frac{1}{2\pi i}\int_\Gamma f(\lambda) \Delta_m  p_m(S; \lambda)  w
\,  d\lambda.\]
Choose $\Gamma$ to be the circle with centre $c_0$ and radius $r$ and \[ p_m(z; \lambda)=(\frac{z-c_0}{r})^m.\]
We have
\begin{eqnarray}
&&V_C  f(S_{1,1})  v-V_C V_C^\top  W_m f(H_m)  W_m^\top C V_C v
 =\frac{1}{2\pi i}\int_\Gamma f(\lambda) \Delta_m \left(\frac{S-c_0 I}{r}\right)^m   w d\lambda\\
&=&\frac{(\rho_0/r)^m}{2\pi i}\int_\Gamma f(\lambda) \Delta_m \left(\frac{S-c_0 I}{\rho_0}\right)^m   w d\lambda.
\end{eqnarray}

By Prop.~\ref{Bound1}, $\cF_C(S)$ is bounded by a disk   $\cD(c_0, \rho_0)$.
Then   Prop.~\ref{bound2} and  the power inequality in  theorem in~(\cite{pearcy1966})  indicate 
\[
|\cF_C(\rho_0^{-1}(S-c_0 I))|\le 1,\; |\cF_C((\rho_0^{-1}(S-c_0 I))^m)|\le 1,\]
 and thus \[ \|(\rho_0^{-1}(S-c_0 I))^m u\|_C\le 2 \|u\|_C. 
\]
Hence,  with the aid of  Prop.~\ref{bound3} and (\ref{HS}),
\[
\|(\lambda I-S)^{-1}\|_C\le d(\Gamma, \cF_C(S))^{-1},
\|W_m(\lambda I-H)^{-1} W_m^\top C\|_C\le \|(\lambda I-H)^{-1}\|\le d(\Gamma, \cF_C(S))^{-1},
\]
and then 
\[
\|\Delta_m w\|_C\le 2 d(\Gamma, \cF_C(S))^{-1} \|w\|_C,
\]where $d(\Gamma, \cF_C (S))$ is the shortest distance between $\Gamma$ and $\cF_C(S)$. 
From (\ref{eq79}),
\[
\|\Delta_m  w\|_C=\| \Delta_m p_m(S; \lambda) w\|_C
\le 2 d(\Gamma, \cF_C (S))^{-1}\cdot  \|p_m(S; \lambda) w\|_C \le \frac{2}{r-\rho_0}\cdot 2\left(\frac{\rho_0}{r}\right)^m \| w \|_C,\]
%
%
%
We have for unit vector $v$
 \begin{eqnarray}
&& \|V_C f(S_{1,1})v- W_m f(H_m) W_m^\top C V_C v \|_C \le
 (\max_{\lambda\in \Gamma} |f(\lambda)|)\cdot 2 d(\Gamma, \cF_C(S))^{-1}\cdot    \|p_m(S; \lambda) w\|_C 
\\
&\le&  (\max_{\lambda\in \Gamma} |f(\lambda)|)
 \cdot \frac{4}{r-\rho_0} (\frac{\rho_0}{r})^m.\label{error1}
  \end{eqnarray}

From (\ref{BS}), (\ref{solx3}) and (\ref{x_apr}), the quality of  $x_a$ in (\ref{x_apr}) can be analyzed in the following inequality,
\begin{eqnarray}
\label{x_bd}
&&\| x_\cR (t)-x_a(t)\|_C \le 
  \|\{V_C  f(S_{1,1}) V_C ^\top  -W_m^{(0)} f(H_m^{(0)}) {W_m^{(0)} }^\top  C \} x(0) \|_C \\
  &+&
   \|\{V_C f_1(S_{1,1}) V_C ^\top  -W_m^{(1)}  f_1(H_m^{(1)} ){W_m^{(1)} }^\top  C \}u(0) \|_C \label{eq103}\\
   &+&
  \|\{V_C f_2(S_{1,1}) V_C ^\top  -W_m^{(2)}  f_2(H_m^{(2)} ){W_m^{(2)} }^\top  C \}u'(0) \|_C, \label{eq104}\end{eqnarray}
  which completes the proof.
  
  \subsection{Proof of Prop.~\ref{num}}
  \begin{proof} Derivatives of $\rho, c_0$ with respect to $\gamma$
 are
\beqq
\label{c'}
\frac{dc_0}{d\gamma}=-\frac{1}{2}\left(\frac{\mu_2}{(\gamma+\mu_2)^2}+\frac{\mu_1}{(\gamma+\mu_1)^2}
\right)<0\eeqq
and
  \beqq\label{eq84}
\frac{d\rho}{d\gamma}=\frac{1}{2}\{-\frac{\mu_2}{(\mu_2+\gamma)^2}+\frac{\mu_1}{(\mu_1+\gamma)^2}\}.
\eeqq 
Then 
\begin{eqnarray}
&&\frac{d}{d\gamma} (\log \rho-\log c_0)
=\frac{1}{\rho}\frac{d\rho}{d\gamma}-
\frac{1}{c_0}\frac{dc_0}{d\gamma}
\\
&=& -\left( 
\frac{
\frac{\mu_2}{(\gamma+\mu_2)^2 }-\frac{\mu_1}{(\gamma+\mu_1)^2 }
}{\frac{\mu_2}{(\gamma+\mu_2) }-\frac{\mu_1}{(\gamma+\mu_1) }
}
\right)
+
\left( 
\frac{
\frac{\mu_2}{(\gamma+\mu_2)^2 }+\frac{\mu_1}{(\gamma+\mu_1)^2 }
}{\frac{\mu_2}{(\gamma+\mu_2) }+\frac{\mu_1}{(\gamma+\mu_1) }
}
\right)\\
&=&2\left( 
\frac{
\frac{\mu_2}{(\gamma+\mu_2)^2 }\frac{\mu_1}{(\gamma+\mu_1)^2}(\gamma+\mu_2-\gamma-\mu_1) 
}{(\frac{\mu_2}{(\gamma+\mu_2) })^2-(\frac{\mu_1}{(\gamma+\mu_1) })^2
}
\right)\\
&=&2((\mu_1^{-1}+\mu_2^{-1}) \gamma^2+2\gamma)^{-1}>0.
\end{eqnarray}

\end{proof}

\subsection{Proof of Prop.~\ref{prop3.5}}

\begin{proof}

 By computations, 
\begin{eqnarray}
&& \frac{d}{d\gamma}\log E(\gamma)=\frac{d}{d\gamma} \{\delta (1-\frac{1}{2c_0})+m\log \frac{\rho}{c_0}-\log (c-\rho)\}\\
&=&-\delta (\frac{1}{4c_0^2} (\frac{\mu_2}{(\gamma+\mu_2)^2} + \frac{\mu_1}{(\gamma+\mu_1)^2}  
))+2m ((\frac{1}{\mu_1}+\frac{1}{\mu_2}) \gamma^2+2\gamma)^{-1}+(\mu_1+\gamma)^{-1}\label{eq97}\\
&=&-\delta\frac{\mu_2(\gamma+\mu_1)^2 +\mu_1(\gamma+\mu_2)^2 }{(2\mu_1\mu_2+\gamma(\mu_1+\mu_2))^2}
+\frac{m}{\gamma}  (\frac{2\mu_1\mu_2}{(\mu_1+\mu_2) \gamma+2\mu_1\mu_2})+(\mu_1+\gamma)^{-1}\\
&=&
\xi^{-1}
\{-\delta+\frac{m}{\gamma} 
 (\frac{2\mu_1\mu_2}{(\mu_1+\mu_2) \gamma+2\mu_1\mu_2})\xi
+(\mu_1+\gamma)^{-1}
\xi
 \}.\label{eq102}
\end{eqnarray}
Here  the function $\xi(\gamma)$ introduced   has an upper bound decreasing with respect to $\gamma$,
\begin{eqnarray}
&&\xi(\gamma):=\frac{\mu_1^2(\mu_2+\gamma)^2}{\mu_1(\mu_2+\gamma)^2}\frac{(1+\frac{\mu_2(\mu_1+\gamma)}{\mu_1(\mu_2+\gamma)})^2}{1+\frac{\mu_2(\mu_1+\gamma)^2}{\mu_1(\mu_2+\gamma)^2 }} =
\mu_1\frac{1+2\frac{\mu_2(\mu_1+\gamma)}{\mu_1(\mu_2+\gamma)} +(\frac{\mu_2(\mu_1+\gamma)}{\mu_1(\mu_2+\gamma)})^2}{1+\frac{\mu_2(\mu_1+\gamma)^2}{\mu_1(\mu_2+\gamma)^2 }} 
\label{eq106}\\
&\le& 
\mu_1(1+2(\frac{\mu_2+\gamma}{\mu_1+\gamma}) +\frac{\mu_2}{\mu_1})
=\mu_1+\mu_2+2\frac{\mu_1}{\mu_1+\gamma}(1+\mu_2-\mu_1). \label{eq105}
\end{eqnarray}
Using the AM-GM inequality on the denominator for the second term of (\ref{eq106}), we have one upper bound for $\xi$,
\[\xi(\gamma)\le 
\mu_1(1+\sqrt{2\mu_2/\mu_1}+\mu_2/\mu_1)=(\sqrt{\mu_1}+\sqrt{\mu_2})^2.
\]Hence, 
for  $\gamma\ge \mu_2$, (\ref{eq102}) gives
\[
- \frac{d}{d\gamma}\log E(\gamma)\ge  (\sqrt{\mu_1}+\sqrt{\mu_2})^{-2} \left\{
\delta-\{\frac{2m\mu_1}{\mu_2(3\mu_1+\mu_2)} +\frac{1}{\mu_2+\mu_1} \}(\sqrt{\mu_1}+\sqrt{\mu_2})^2
\right\}.
\]

\end{proof}
\bibliographystyle{alpha}
\newcommand{\etalchar}[1]{$^{#1}$}

%
%
  
\end{document}